\newtheorem{thm}{Theorem}[section]
\newtheorem{Con}[thm]{Conjecture}
\newtheorem{lem}[thm]{Lemma}
\newtheorem{pro}[thm]{Proposition}
\theoremstyle{definition}
\numberwithin{equation}{section}
\newcommand{\X}{\mathbf{X}}
\newcommand{\x}{\mathbf{x}}
\newcommand{\Y}{\mathbf{Y}}
\newcommand{\Z}{\mathbf{Z}}
\newcommand{\W}{\mathbf{W}}
\newcommand{\ex}{\mathbb{E}}
\newcommand{\re}{\textup{Re}}
\newcommand{\p}{\mathbb{P}}
\newcommand{\QQ}{{\mathbb Q}}
\newcommand{\RR}{{\mathbb R}}
\newcommand{\PP}{\mathbb P}
\newcommand{\EE}{\mathbb E}
\renewcommand{\pmod}[1]{\allowbreak\mkern7mu({\operator@font mod}\,\,#1)}
\newcommand{\bal}{\[\begin{aligned}}
\newcommand{\eal}{\end{aligned}\]}
\newcommand{\be}{\begin{equation}}
\newcommand{\ee}{\end{equation}}
\newcommand{\eps}{\ensuremath{\varepsilon}}
\renewcommand{\le}{\leqslant}
\renewcommand{\leq}{\leqslant}
\renewcommand{\ge}{\geqslant}
\renewcommand{\geq}{\geqslant}
\newcommand{\fl}[1]{{\ensuremath{\left\lfloor {#1} \right\rfloor}}}
\newcommand{\order}{\asymp}      
\renewcommand{\(}{\left(}
\renewcommand{\)}{\right)}
\newcommand{\pfrac}[2]{\left(\frac{#1}{#2}\right)}  
\newcommand{\xx}{\ensuremath{\mathbf{x}}}
\newcommand{\cA}{\mathcal{A}}
\newcommand{\Var}{\mathrm{Var}}
\newcommand{\var}{\mathrm{Var}}
\begin{document}

\baselineskip=17pt

\title{Extreme biases in prime number races with many contestants}

\author{Kevin Ford}
\address{KF: Department of Mathematics, University of Illinois at Urbana-Champaign, 1409 West Green Street, Urbana, IL 61801, USA}
\email{ford@math.uiuc.edu}

\author{Adam J Harper}
\address{AJH: Mathematics Institute, Zeeman Building, University of Warwick, Coventry CV4 7AL, England}
\email{A.Harper@warwick.ac.uk}

\author{Youness Lamzouri}
\address{YL: Department of Mathematics and Statistics,
York University,
4700 Keele Street,
Toronto, ON,
M3J1P3
Canada}
\email{lamzouri@mathstat.yorku.ca}

\date{\today}


\thanks{Kevin Ford was supported by National Science Foundation grant DMS-1501982.
Adam Harper was supported, for part of this research, by a research fellowship at Jesus College, Cambridge. Youness Lamzouri is partially supported by a Discovery Grant from the Natural Sciences and Engineering Research Council of Canada. In addition, part of this work was carried out at MSRI, Berkeley during the Spring semester of 2017, supported in part by NSF grant DMS 1440140.}

\begin{abstract}
We continue to investigate the race between prime numbers in many residue classes modulo $q$, assuming the standard conjectures GRH and LI.

We show that provided $n/\log q \rightarrow \infty$ as $q \rightarrow \infty$, we can find $n$ competitor classes modulo $q$ so that the corresponding $n$-way prime number race is extremely biased. This improves on the previous range $n \geq \varphi(q)^{\epsilon}$, and (together with an existing result of Harper and Lamzouri) establishes that the transition from all $n$-way races being asymptotically unbiased, to biased races existing, occurs when $n = \log^{1+o(1)}q$.

The proofs involve finding biases in certain auxiliary races that are easier to analyse than a full $n$-way race. An important ingredient is a quantitative, moderate deviation, multi-dimensional Gaussian approximation theorem, which we prove using a Lindeberg type method.
\end{abstract}

\maketitle

\section{Introduction}
Let $q\geq 3$ and $2\leq n\leq \varphi(q)$ be integers, (where the Euler function $\varphi(q)$ denotes the number of residue classes mod $q$ that are coprime to $q$), and let $\mathcal{A}_n(q)$ be the set of ordered $n$-tuples $(a_1,a_2,\dots, a_n)$ of distinct residue classes that are coprime to $q$. In this paper we are interested in the ``Shanks--R\'enyi prime number race'', which is the following problem: if we let $\pi(x;q,a)$ denote the number of primes $p\leq x$ with $p\equiv a\bmod q$, is it true that for any $(a_1,a_2,\dots, a_n)\in \mathcal{A}_n(q)$, we will have the ordering
\begin{equation}\label{FullOrdering}
 \pi(x;q,a_1)>\pi(x;q,a_2)>\dots>\pi(x;q,a_n)
 \end{equation}
for infinitely many integers $x$? There is now an extensive body of work investigating different aspects of this question, and the reader may consult the expository papers of Granville and Martin \cite{GM},  Ford and Konyagin \cite{FK2}, and Martin and Scarfy \cite{MS} for fuller discussions.

Assuming the Generalized Riemann hypothesis GRH and the Linear Independence hypothesis LI (the assumption that the nonnegative imaginary parts of the nontrivial zeros of Dirichlet $L$-functions attached to primitive characters are linearly independent over $\mathbb{Q}$), Rubinstein and Sarnak~\cite{RuSa} proved that the answer to this question is always Yes. More strongly, they proved that for any $(a_1,\dots, a_n)\in \mathcal{A}_n(q)$, the set of real numbers $x\geq 2$ such that \eqref{FullOrdering} holds has a positive logarithmic density, which we shall denote by $\delta(q;a_1, \dots, a_n)$. Recall here that the logarithmic density of a subset $S \subseteq \mathbb{R}$ is defined as
$$ \lim_{x\to \infty}\frac{1}{\log x}\int_{t\in S\cap [2,x]}\frac{dt}{t},$$
provided the limit exists. This density can be regarded as the ``probability'' that for each $1\leq j\leq n$, the player $a_j$ is at the $j$-th position in the prime race. As we shall discuss, a probabilistic perspective turns out to be very helpful in this problem.

Next it is natural to ask whether all orderings of the $\pi(x;q,a_i)$ occur with approximately the same logarithmic density, in other words whether $\delta(q;a_1, \dots, a_n) \approx 1/n!$. For small $q$, the widely known phenomenon of {\em Chebyshev's bias} implies that $\delta(q;a_1, a_2)$ can be significantly different from 1/2, if one of the $a_i$ is a quadratic residue and the other a non-residue mod $q$. For example, Theorem 1.11 of Fiorilli and Martin~\cite{FiM} implies that $\delta(24;5, 1) \approx 0.99999$, assuming GRH and LI. On the other hand, Rubinstein and Sarnak \cite{RuSa} showed (assuming GRH and LI) that for any fixed $n$,
\begin{equation}
\lim_{q\to\infty}\max_{(a_1,\dots,a_n)\in\mathcal{A}_n(q)}\left|\delta(q;a_1,\dots,a_n)-\frac{1}{n!}\right|=0 ,
\end{equation}
in other words any biases dissolve when $q \rightarrow \infty$. Different behaviour is possible if one races ``teams'' of many residue classes combined against one another (e.g. all the quadratic residues mod $q$ against all the non-residues mod $q$), as explored in Fiorilli's paper~\cite{Fi}. Feuerverger and Martin~\cite{FeM} raised the question of having a uniform version of Rubinstein and Sarnak's statement, in which $n \to \infty$ as $q\to \infty$. And they asked whether for $n$ sufficiently large in terms of $q$ the asymptotic formula $\delta(q;a_1, \dots, a_n)\sim 1/n!$ might become false. Ford and Lamzouri (unpublished) formulated the following conjecture.
\begin{Con}[Ford and Lamzouri]\label{ExpectedRange}  Let $\varepsilon>0$ be small and $q$ be sufficiently large. 
\begin{enumerate}
\item (Uniformity for small $n$) If $2\leq n\leq (\log q)^{1-\varepsilon}$, then uniformly for all $n$-tuples $(a_1,\dots,a_n)\in\mathcal{A}_n(q)$ we have
$\delta(q;a_1, \dots, a_n)\sim 1/n!$ as $q\to \infty$.

\item (Biases for large $n$) If $ (\log q)^{1+\varepsilon} \leq n\leq \varphi(q)$, then there exist  $n$-tuples $(a_1,\dots,a_n) \in\mathcal{A}_n(q)$ and $(b_1,\dots, b_n)\in\mathcal{A}_n(q)$ for which $n! \cdot \delta(q;a_1, \dots, a_n) \to 0$ and $n! \cdot \delta(q;b_1, \dots, b_n) \to \infty$ as $q\to\infty$.

\end{enumerate} 
\end{Con}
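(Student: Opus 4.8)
\bigskip
\noindent\textbf{Proof proposal.}
We concentrate on part (2); part (1) is the theorem of Harper and Lamzouri alluded to above, and the plan is to produce biased $n$-way races for \emph{every} large $q$ as soon as $n/\log q\to\infty$, which together with part (1) fixes the threshold at $n=(\log q)^{1+o(1)}$. By the Rubinstein--Sarnak framework, under GRH and LI one has $\delta(q;a_1,\dots,a_n)=\PP(X_1>\dots>X_n)$ for a random vector $\X=(X_1,\dots,X_n)$ of the shape $X_j\deq-c_q(a_j)+Y_j$, where $c_q(a)=-1+\#\{b\bmod q:b^2\equiv a\}$ and $(Y_1,\dots,Y_n)$ is an almost surely convergent, mean-zero sum of independent summands indexed by the nontrivial zeros of the Dirichlet $L$-functions mod $q$, with covariance $\Sigma_{ij}=\sum_{\chi\ne\chi_0}\re\bigl(\chi(a_ja_i^{-1})\bigr)\,b(\chi)$ for positive weights $b(\chi)\asymp\log\mathfrak q(\chi)$ built from the zeros of $L(s,\chi)$. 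Thus $\var X_j=V:=\sum_{\chi\ne\chi_0}b(\chi)\asymp\varphi(q)\log q$, while for ``generic'' pairs of residues the off-diagonal entries of $\Sigma$ are only $O(\sqrt{\varphi(q)})$. The engine of the argument is that one off-diagonal pattern is genuinely of size $\varphi(q)$: for $a_i^{-1}a_j=-1$ the mismatch in $\Gamma$-factors between even and odd characters forces
\[ \Sigma_{ij}=\sum_{\chi\ne\chi_0}\chi(-1)\,b(\chi)=-c_0\,\varphi(q)+o(\varphi(q)),\qquad c_0=\log 2, \]
the lower-order terms reducing to the character sum $\sum_{\chi\ne\chi_0}\chi(-1)\tfrac{L'}{L}(1,\chi)$ together with conductor contributions that cancel identically. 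Hence the pair $\{a,-a\}$ is \emph{negatively} correlated, with correlation $\rho_q=\Sigma_{ij}/V\asymp-1/\log q$ --- exactly the critical order --- for every modulus $q$.

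Fix $m=\fl{n/2}$ pairwise generic residues $g_1,\dots,g_m$ (chosen by a short counting argument so that the remaining off-diagonal entries of $\Sigma$ stay $O(\sqrt{\varphi(q)})$) and take the competitors to be $g_1,-g_1,\dots,g_m,-g_m$ (dropping one member when $n$ is odd). For the first assertion ($n!\delta\to0$) order them so that the correlated pairs are \emph{adjacent}: $a_{2k-1}=g_k$, $a_{2k}=-g_k$. For the second assertion ($n!\delta\to\infty$) order them so that one member of each pair lands in the top half of the ranking and the other in the bottom half: $a_k=g_k$, $a_{m+k}=-g_k$. Writing $U_k=\tfrac12(X_{g_k}+X_{-g_k})$ and $W_k=\tfrac12(X_{g_k}-X_{-g_k})$, one has $\var U_k=\tfrac12 V(1+\rho_q)$, $\var W_k=\tfrac12 V(1-\rho_q)$, and in the Gaussian model below $U_k\perp W_k$ with distinct pairs independent. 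The event $\{X_1>\dots>X_n\}$ then unwinds into the ordering of the $U_k$ --- an \emph{auxiliary $m$-way race}, which is essentially unbiased because the $U_k$ are very nearly independent --- together with $O(m)$ further conditions: in the adjacent labelling these compare each gap $U_k-U_{k+1}$ with the sum $W_k+W_{k+1}$; in the spread labelling they require every $X_{g_k}$ to exceed every $X_{-g_j}$, a ``bridge'' the $W_k$ must span. Both sets of conditions are governed by $\var W_k$, which the negative correlation inflates above its $\rho_q=0$ value.

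To estimate this conjunction one replaces $\X$ by the Gaussian vector $\mathbf G$ with the same mean and covariance, incurring an error that is \emph{quantitative, multi-dimensional, and valid deep in the moderate-deviation range} --- that is, down to probabilities as small as $1/n!$. This is the Lindeberg-type theorem: swap the independent summands of $\X$ for matching Gaussians one at a time, Taylor-expand a smoothed indicator of the (nearly convex) target event, and bound third moments, tuning the smoothing scale so that the dimension $n$ and the target probability enter the error only polynomially. For the Gaussian, the conjunction reduces to elementary interval and order-statistic probabilities for independent normals; carrying out the bookkeeping, each of the $m$ pairs contributes a multiplicative factor $1+\Theta(|\rho_q|)$ whose sign is fixed by the labelling --- the larger $W_k$-variance makes the gap conditions \emph{harder} in the adjacent labelling and the bridge \emph{easier} in the spread one. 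Relative to the unbiased baseline $1/n!$ this gives
\[ n!\,\delta(q;a_1,\dots,a_n)=\exp\bigl(\mp\,\Theta(m|\rho_q|)\bigr)=\exp\bigl(\mp\,\Theta(n/\log q)\bigr), \]
which tends to $0$ for the adjacent labelling and to $\infty$ for the spread labelling as soon as $n/\log q\to\infty$; this is part (2). (For $n$ a large power of $\log q$ the second-order terms must also be controlled, and for $q$ with many small prime factors one cannot always avoid further appreciably correlated pairs --- but these are themselves negatively correlated and only reinforce the bias, so the argument persists throughout the range.)

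The crux is making the Gaussian approximation work at this scale. The events to be estimated have probability comparable to $1/n!=e^{-(1+o(1))n\log n}$, so an ordinary multivariate central limit theorem is hopeless: one needs an approximation accurate on the moderate-deviation scale \emph{and} with error controlled uniformly in the growing dimension $n$. This is compounded by the fact that, along the matching, $\Sigma$ is within $O(1/\log q)$ of singular, so the relevant Gaussian densities are large and the Lindeberg swapping error must be weighed against them rather than against $O(1)$; closing the smoothing-versus-third-moment trade-off with room to spare is where the real work lies. A subsidiary difficulty is the covariance estimate itself: pinning down the constant and sign of $\Sigma_{ij}$ for $\{a,-a\}$ requires showing $\sum_{\chi\ne\chi_0}\chi(-1)\tfrac{L'}{L}(1,\chi)=o(\varphi(q))$, which needs genuine cancellation --- individual terms can be as large as $\log\log q$ --- obtained from the expression of this sum in terms of prime powers $\equiv-1\pmod q$.
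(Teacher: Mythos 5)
Your proposal runs on the same engine as the paper's: the Rubinstein--Sarnak random model, the single genuinely large negative correlation $B_q(a,-a)/\var(q)\sim-\log 2/\log q$ for antipodal pairs, a family of $\Theta(n)$ such pairs chosen so that all other correlations are negligible (the paper takes primes in a short interval, Lemma \ref{correlation-2}), two arrangements of the pairs to force biases of either sign, and a bespoke Lindeberg-type moderate-deviation Gaussian approximation with smoothed indicators; quoting Harper--Lamzouri for part (1) is also exactly what the paper does. Where you differ is in the analysis of the ordering event: you attack the full $n$-way race directly, via the rotated variables $U_k,W_k$ and order-statistics bookkeeping, whereas the paper first proves extreme biases for the auxiliary densities $\delta_{2k}$ and $\delta_{2k}^{\#}$ (paired competitors pinned above, or alternately above and below, the remaining $n-2k$), where the bias appears cleanly as the density tilt $\exp\big(\xi\sum_j x_{2j-1}x_{2j}\big)$ of Proposition \ref{ASYMPDF}, and only then passes to $\delta(q;a_1,\dots,a_n)$ by averaging over orderings of the untracked competitors. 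Your endgame numerology is right ($n!\,\delta=\exp(\mp\Theta(n/\log q))$ matches Theorem \ref{mainbiasthm}), but the claim that each pair contributes a clean factor $1+\Theta(|\rho_q|)$ in the full race is asserted rather than proved; making it rigorous is essentially what the paper's conditioning events and density asymptotics are for.

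Two concrete gaps. First, the top of the conjectured range: your construction needs $m=\lfloor n/2\rfloor$ pairs with all remaining normalised correlations negligible, plus a Gaussian approximation valid in dimension $n$ at probability scale $1/n!$, and both force $n$ to be at most a small power of $q$ (the paper works with at most $\varphi(q)^{1/50}$ tracked competitors, and Proposition \ref{normapproxpnr} already requires $n\leq\varphi(q)^{1/12}$). The conjecture demands all $n$ up to $\varphi(q)$, and the paper gets there by embedding a biased $m$-race into the $n$-race and pigeonholing over completions of the ordering; your parenthetical fix --- that unavoidable extra correlations are negative and ``only reinforce the bias'' --- is not valid, since in the spread labelling a negative correlation between two competitors both placed in the top half pushes the probability the wrong way. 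Second, the Lindeberg step as you describe it (tune the smoothing scale, bound third moments) naturally yields an \emph{additive} error that is polynomial in $n$ times a negative power of $q$, which is useless against $1/n!=e^{-n\log n}$ already for $n\geq(\log q)^{1+\varepsilon}$; you correctly identify that a relative-error approximation is the crux, but supply no mechanism for it. The paper's device is to use test functions $h$ satisfying $|\partial h|\leq C_1h$ and $|\partial^3 h|\leq C_3h$ pointwise, so that the Taylor remainders in the swapping argument are proportional to $h$ itself and accumulate into a multiplicative factor $e^{O(mC_3\epsilon^3)}$, leaving only super-exponentially small additive tails (Lemma \ref{lindeberglemma} and Proposition \ref{moddevprop}); some such idea is needed to make the theorem you invoke true in the range you need.
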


The first part of this conjecture is now known to hold (assuming GRH and LI) in a slightly stronger form, as Harper and Lamzouri~\cite{HL} proved that, uniformly for all $2\leq n\leq \log q/(\log\log q)^4$ and all $n$-tuples $(a_1,\dots,a_n)\in\mathcal{A}_n(q)$, we have
$$\delta(q;a_1, \dots, a_n)= \frac{1}{n!}\left(1+O\left(\frac{n(\log n)^4}{\log q}\right)\right).$$
This improved on an earlier result of Lamzouri \cite{La1}, where the asymptotic $\delta(q;a_1, \dots, a_n)\sim 1/n!$ was established in the range $n = o(\sqrt{\log q})$, assuming GRH and LI.

Regarding the second part of the conjecture, Harper and Lamzouri~\cite{HL} proved (assuming GRH and LI) that for any $\varepsilon > 0$ and every $\varphi(q)^{\varepsilon} \leq n \leq \varphi(q)$, there exists an $n$-tuple $(a_1,\dots,a_n)\in\mathcal{A}_n(q)$ such  that
$$ \delta(q;a_1, \dots, a_n)<\big(1-c_{\varepsilon}\big)\frac{1}{n!},$$
where $c_{\varepsilon} > 0$ depends only on $\varepsilon$. This was the first result on $n$-way prime number races where the biases do not dissolve when $q \rightarrow \infty$, but it is clearly far from the full statement in part (2) of Ford and Lamzouri's conjecture. In particular, we note that the bias $1 - c_{\epsilon}$ is always less than 1 (whereas the conjecture asserts that biases towards both small and large values should be possible), and always close to 1 (whereas multipliers that tend to zero or to infinity with $q$ should be possible). Our goal in this paper is to revisit this issue.

\vspace{12pt}
We shall prove the following result.
\begin{thm}\label{mainbiasthm}
Assume GRH and LI. There exists a large absolute constant $C$ such that the following is true. Provided 
$n$ is sufficiently large and $n \leq \varphi(q)$, 
there exist distinct reduced residues $a_1, \cdots, a_n$ modulo $q$ such that 
$$ \delta(q;a_1, \dots, a_n) \leq \exp\left(-\frac{\min\{n, \varphi(q)^{1/50}\}}{C \log q}\right) \frac{1}{n!} , $$
and there exist distinct reduced residues $b_1, \cdots, b_n$ modulo $q$ such that 
$$ \delta(q;b_1, \dots, b_n) \geq \exp\left(\frac{\min\{n, \varphi(q)^{1/50}\}}{C \log q}\right) \frac{1}{n!} . $$
\end{thm}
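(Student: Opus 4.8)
The plan is to reduce the analysis of a full $n$-way race to the analysis of a simpler auxiliary race, exploit the Rubinstein--Sarnak probabilistic framework, and then control the relevant probabilities via a quantitative multi-dimensional Gaussian approximation. Recall that under GRH and LI, Rubinstein and Sarnak show that $\delta(q;a_1,\dots,a_n)$ equals the probability that a certain random vector $X = (X_{a_1},\dots,X_{a_n})$ satisfies $X_{a_1} > X_{a_2} > \dots > X_{a_n}$, where each $X_a$ is built from the explicit formula as a sum over nontrivial zeros of Dirichlet $L$-functions; by LI these contributions are independent, and by the central limit heuristic $X$ should be close to a mean-zero Gaussian vector whose covariance structure is governed by the matrix $(\operatorname{cov}(X_a, X_b))_{a,b}$, which in turn is controlled by counts of solutions to $a x \equiv b \pmod q$ weighted by $1/|\rho|^2$ over zeros $\rho$. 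The first step is to make this precise: state and prove a \emph{quantitative, moderate-deviation, multi-dimensional Gaussian approximation theorem} (as advertised in the abstract) via a Lindeberg swapping argument, replacing the zero-sum random variables one at a time by Gaussians with matching first and second moments and tracking the error in terms of third moments and the dimension. The Lindeberg method is well-suited here because it degrades gracefully with dimension and tolerates the moderate deviation regime we need (events of probability roughly $e^{\pm n/(C\log q)}$ rather than constant-order events).

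Next I would design the competitor classes. For the \textbf{under-biased} tuple $(a_1,\dots,a_n)$, the idea is to pick residues so that the associated Gaussian vector has strong positive correlations in a way that makes the fully-ordered event rarer than $1/n!$: intuitively, if many of the $X_{a_i}$ are strongly positively correlated, they tend to cluster together and a long strict chain $X_{a_1} > \dots > X_{a_n}$ becomes harder to realise than for independent coordinates. Concretely one looks for a large set of reduced residues lying in a coset structure (e.g. residues of the form $g^{j}$ times a fixed element, or residues concentrated so that the matrix $(\operatorname{cov}(X_{a_i},X_{a_j}))$ has a dominant rank-one or block part), and one estimates the ordering probability for such a Gaussian by comparing to the independent case — the gain $e^{-cn/\log q}$ should come out of the "effective number of independent coordinates" being reduced by a factor related to $1/\log q$. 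For the \textbf{over-biased} tuple $(b_1,\dots,b_n)$, one instead arranges \emph{negative} or anti-aligned correlations, or uses the mean shifts coming from Chebyshev-type biases (the non-zero means $\operatorname{mean}(X_a)$ attached to squares versus non-squares) to push the vector toward the target ordering, producing $\delta(q;b_1,\dots,b_n) \geq e^{cn/\log q}/n!$. The role of the auxiliary races is precisely to isolate these mechanisms: rather than analysing the $n!$-way symmetry directly, one compares the $n$-way race to a race among a smaller number of "teams" or to a product of lower-dimensional races, for which the Gaussian ordering probabilities can be computed or bounded cleanly, and then transfers the bias back.

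The remaining steps are: (i) verify that the chosen residues actually exist modulo $q$ for all $q$ with $n \leq \varphi(q)$, which is a pigeonhole/coset-counting argument and is where the threshold $\varphi(q)^{1/50}$ (versus the naive $n$) enters — when $n$ exceeds a small power of $\varphi(q)$ the combinatorial room to find the desired correlation structure saturates, so the bound caps at $\exp(\pm \varphi(q)^{1/50}/(C\log q))$; (ii) control the covariance matrix entries, reducing them to standard sums $\sum_\rho 1/|\rho|^2$ over zeros of $L(s,\chi)$, which are $\asymp \log q$, so that the "per-coordinate" variance is of size $\log q$ and the correlations of size $1$ produce exactly the $n/\log q$ scaling; and (iii) combine the Gaussian approximation error from step one with the Gaussian ordering estimates, checking that the approximation error is smaller than the claimed bias $e^{\pm n/(C\log q)}/n!$ throughout the stated range. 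The main obstacle I expect is step one together with (iii): proving a Gaussian approximation that is simultaneously (a) multi-dimensional with the dimension $n$ growing, (b) valid in a moderate-deviation regime where the probabilities themselves are exponentially small in $n/\log q$, and (c) quantitatively strong enough that its error term does not swamp the bias. Getting the Lindeberg argument to yield an error that beats $e^{-n/(C\log q)}/n!$ — rather than the cruder $1/n!$-type errors available from prior work — is the technical heart of the paper, and it is what forces the careful choice of test functions (smooth approximations to the indicator of the ordering region) and the careful bookkeeping of third-moment contributions across all $n$ swaps.
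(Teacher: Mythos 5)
Your Gaussian-approximation component (Lindeberg swapping, smooth approximations of the ordering indicator, a relative ``moderate deviation'' error) does match the paper's Section 3 in spirit. But the heart of the construction --- where the bias actually comes from --- has genuine gaps. First, your mechanism for the under-biased tuple is not viable: all normalized correlations $B_q(a,b)/\mathrm{Var}(q)$ are $O(1/\log q)$, so there are no ``strongly positively correlated'' coordinates available (and the extremal correlations of size $\asymp 1/\log q$ occur only with a negative sign); more fundamentally, even if one could make the coordinates equicorrelated with a large positive correlation, the probability of any fixed strict ordering of an exchangeable Gaussian vector is exactly $1/n!$ by symmetry, so ``clustering'' produces no bias at all. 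A bias requires an asymmetric correlation structure tied to the positions occupied in the ordering. Second, your fallback for the over-biased tuple --- Chebyshev-type mean shifts attached to squares versus non-squares --- cannot work at this scale: the shifts $C_q(a)$ are tiny compared with the standard deviation $\asymp \sqrt{\varphi(q)\log q}$, so after normalization they are $\ll \varphi(q)^{-0.49}$, far too small to move any ordering probability by a factor $e^{\pm n/(C\log q)}$; the paper explicitly notes these shifts are negligible and that its bias has a completely different source.

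What the paper actually does: using a correlation lemma (built on Lamzouri's asymptotics) one chooses the competitors in antipodal pairs $b_i,-b_i$, so that each pair has correlation $\xi \sim -\log 2/\log q$ while every other correlation is negligibly small. One then studies two auxiliary densities rather than the full ordering: $\delta_{2k}$, in which the $2k$ paired classes are all forced above the remaining $n-2k$ classes (each negatively correlated pair being simultaneously large is penalized, giving the small bias), and $\delta_{2k}^{\#}$, in which the odd-indexed classes sit at the top and the even-indexed ones at the bottom (each negatively correlated pair is split high/low, which is rewarded, giving the large bias) --- this second auxiliary race is what makes the over-bias direction possible, and your plan supplies no substitute for it. Finally, the deduction of the stated theorem is an averaging identity that you only gesture at: the auxiliary density equals the sum of $(n-2k)!$ full-ordering densities obtained by permuting the tail block, so some single full-ordering density is at most (resp.\ at least) the average and inherits the factor $\exp\left(\pm\Theta\left(k\log(n/k)/\log q\right)\right)$; and when $n > \varphi(q)^{1/50}$ one applies this with $m=\lfloor \varphi(q)^{1/50}\rfloor$ classes and embeds the biased $m$-tuple into an $n$-tuple via the analogous decomposition with $n!/m!$ terms, which is where the cap $\min\{n,\varphi(q)^{1/50}\}$ really comes from (a constraint of the Gaussian approximation and correlation error terms, not a combinatorial saturation of coset structures). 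Without the antipodal-pair construction, the two auxiliary races, and the explicit averaging step, the proposed argument does not yield the claimed inequalities.
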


Notice this fully establishes part (2) of Ford and Lamzouri's conjecture, as soon as $n/\log q \rightarrow \infty$. Furthermore, as $n$ becomes larger the relative biases become quantitatively very extreme, and for $n \leq \log q$ the bias $\exp\left( \pm \frac{n}{C \log q}\right) = 1 \pm \Theta\left(\frac{n}{\log q}\right)$ roughly matches the factor $\left(1+O\left(\frac{n(\log n)^4}{\log q}\right)\right)$ in Harper and Lamzouri's~\cite{HL} uniformity result. (For large fixed $n$, one can also think of this as clarifying the dependence on $n$ in Theorem A of Lamzouri~\cite{L3}.)

We shall actually obtain Theorem \ref{mainbiasthm} as a straightforward corollary of another ordering result. For any integers $1\leq k\leq n \leq \varphi(q)$, let us define
 $\delta_k(q;a_1, \dots, a_n)$ to be the logarithmic density of the set of real numbers $x\geq 2$ such that 
$$
\pi(x;q, a_1)>\pi(x;q, a_2) > \cdots> \pi(x;q, a_k)> \max_{k+1\leq j\leq n}\pi(x;q, a_j).
$$
If everything were uniform, we would expect that $\delta_k(q;a_1, \dots, a_n) \approx (n-k)!/n!$. Let us also define $\delta_{2k}^{\#}(q;a_1, \dots, a_n)$ to be the logarithmic density of the set of real numbers $x\geq 2$ such that
$$ \pi(x;q, a_1)>\pi(x;q, a_3) > \cdots> \pi(x;q, a_{2k-1})> \max_{2k+1\leq j\leq n}\pi(x;q, a_j) > \min_{2k+1\leq j\leq n}\pi(x;q, a_j) > $$
$$ > \pi(x;q, a_{2k}) >\cdots > \pi(x;q, a_4) > \pi(x;q, a_{2}).
$$
Again, if everything were uniform we would expect that $\delta_{2k}^{\#}(q;a_1, \dots, a_n) \approx (n-2k)!/n!$.

Harper and Lamzouri~\cite{HL} proved the uniformity result that, assuming GRH and LI,
$$ \delta_k(q;a_1, \dots, a_n)= \frac{(n-k)!}{n!}\left(1+O\left(k(\log k)^6\frac{\log n}{\log q}+\frac{1}{n \log^{1/10}q} \right)\right) $$
whenever $k(\log k)^{10} \leq (\log q)/\log n$. They also proved a non-uniformity result: for any fixed\footnote{We remark that the case $k=1$ is not excluded from the non-uniformity result for technical reasons, but because it is genuinely different. Harper and Lamzouri~\cite{HL} showed that $\delta_1(q;a_1, \dots, a_n) \sim 1/n$ provided $n = o(\varphi(q)^{1/32})$, when $q \rightarrow \infty$. Notice that Theorem \ref{kExtremeBias}, below, deals with $\delta_{2k}(q;a_1, \dots, a_n)$ and $\delta_{2k}^{\#}(q;a_1, \dots, a_n)$, so the case of $\delta_1(q;a_1, \dots, a_n)$ is again excluded.} $k \geq 2$, 
fixed $\eps>0$, 
and any $\varphi(q)^{\varepsilon} \leq n<\varphi(q)^{1/41}$, there exists an $n$-tuple $(a_1,\dots,a_n)\in\mathcal{A}_n(q)$ such that
$$ \delta_k(q;a_1, \dots, a_n)<\big(1-c_{\varepsilon}\big)\frac{(n-k)!}{n!} . $$

Here we establish a significantly improved non-uniformity result.

\begin{thm}\label{kExtremeBias} Assume GRH and LI. There exists a large absolute constant $A$ such that the following is true. Suppose $q$ is large, and let $1 \leq k \leq n/A \leq \varphi(q)^{1/50}$. Then there exist distinct reduced residues $a_1, \cdots, a_n$ modulo $q$ such that 
\begin{equation}\label{SmallBias}
\delta_{2k}(q;a_1, \dots, a_n) \leq \exp\left(-\frac{k\log (n/k)}{2\log q}\right) \frac{(n-2k)!}{n!}, 
\end{equation}
and 
\begin{equation}\label{LargeBias}
\delta_{2k}^{\#}(q;a_1, \dots, a_n) \geq \exp\left(\frac{k\log (n/k)}{2\log q}\right) \frac{(n-2k)!}{n!}.
\end{equation}
\end{thm}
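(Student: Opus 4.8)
The plan is to work in the Rubinstein--Sarnak probabilistic model. Under GRH and LI, $\delta_{2k}(q;a_1,\dots,a_n)$ equals $\mathbb{P}(X_{a_1}>\cdots>X_{a_{2k}}>\max_{j>2k}X_{a_j})$, and $\delta_{2k}^{\#}$ equals the analogous probability, for a random vector $\X=(X_{q,a})_a$ which is a sum, over the nontrivial zeros of Dirichlet $L$-functions modulo $q$, of contributions that are independent by LI; the mean of $X_{q,a}$ is the Chebyshev term $-c_q(a)$, of size $O(2^{\omega(q)})$ and hence negligible against the standard deviation $\asymp\sqrt{\varphi(q)\log q}$, while $\mathrm{Cov}(X_{q,a},X_{q,b})=\sum_{\chi\neq\chi_0}b(\chi)\,\mathrm{Re}\,\chi(ab^{-1})$ with $b(\chi)=\sum_{\rho}(1/4+\gamma^2)^{-1}$. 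First I would record the arithmetic input on which everything rests: pairing a reduced residue $a$ with $q-a$, the even/odd split of the characters modulo $q$ together with the parity-dependent constant $\frac{\Gamma'}{\Gamma}(\tfrac12)-\frac{\Gamma'}{\Gamma}(1)=-2\log 2$ appearing in $b(\chi)=\log(f_\chi/\pi)+\mathrm{Re}\,\frac{\Gamma'}{\Gamma}(\tfrac{1+\mathfrak a_\chi}{2})+2\,\mathrm{Re}\,\frac{L'}{L}(1,\chi)$ gives, after the $L'/L$ contribution is shown to be $O(\sqrt{\varphi(q)}(\log q)^2)$ via GRH, the estimate $\mathrm{Cov}(X_{q,a},X_{q,q-a})=-\varphi(q)\log 2+O(\sqrt{\varphi(q)}(\log q)^2)$, whereas $\mathrm{Var}(X_{q,a})=\varphi(q)(\log q+O(1))$. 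Thus $a$ and $q-a$ are anticorrelated with correlation $-\tfrac{\log 2}{\log q}(1+o(1))$ --- genuinely of order $1/\log q$, uniformly in $a$ and for every $q$.

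The construction is then to take competitor classes consisting of $k$ anticorrelated pairs $(a_{2i-1},a_{2i})=(a_{2i-1},q-a_{2i-1})$, $i=1,\dots,k$, together with $n-2k$ further ``bulk'' classes $a_{2k+1},\dots,a_n$, chosen (by a routine counting argument, almost all tuples working) so that every pairwise covariance among the $X_{a_j}$ other than the $k$ within-pair ones is $o(\varphi(q))$. Then $(X_{a_1},\dots,X_{a_n})$ has, up to negligible errors, covariance $V:=\varphi(q)\log q$ on the diagonal, $-\rho V$ with $\rho=\tfrac{\log 2}{\log q}$ on the $k$ within-pair entries, $0$ elsewhere, and negligible mean. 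Theorem~\ref{mainbiasthm} will follow from Theorem~\ref{kExtremeBias} with $k\asymp\min\{n,\varphi(q)^{1/50}\}$: a full ordering is more restrictive than the corresponding top-$2k$ ordering, and conditionally on the top-$2k$ event the near-independence of the bulk makes its internal ordering asymptotically uniform over the $(n-2k)!$ possibilities, which converts the normalisation $\tfrac{(n-2k)!}{n!}$ into $\tfrac1{n!}$; the large-bias classes for $\delta$ come similarly from $\delta_{2k}^{\#}$.

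Next comes the technical heart, a quantitative multi-dimensional Gaussian approximation valid in the moderate-deviation range that is relevant here: the events have probability $\approx(n-2k)!/n!$, which probes $\X$ a distance $\asymp\sqrt{\log(n/k)}$ standard deviations into the tail on $2k$ coordinates, with thresholds set by the extreme order statistics of the bulk. One replaces $\X$, restricted to the finitely many relevant linear functionals, by a Gaussian $\G$ with the same mean and covariance, at the cost of only a multiplicative error $1+o(1)$. I would prove this by a Lindeberg-type argument: swap the contribution of one zero $\rho$ (a bounded vector of coordinate-size $\ll(1/4+\gamma^2)^{-1/2}$) at a time for a matching Gaussian and Taylor-expand a suitably smoothed test function capturing the ordering event, bounding the error per swap by a third-moment estimate and summing over the $\asymp\varphi(q)\log q$ relevant zeros with their rapidly decaying sizes. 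I expect the main obstacle to be exactly this step: choosing the smoothing scale so the test function is simultaneously smooth enough for the Taylor expansion and sharp enough to detect the ordering, accommodating the growing dimension $2k$, and pushing through the moderate-deviation regime so that the accumulated error is $o$ of the target bias $\exp(\pm\tfrac{k\log(n/k)}{\log q})$ precisely when $n\lesssim\varphi(q)^{1/50}$ (the exponent $1/50$ being the margin this forces).

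Finally, for the Gaussian model the computation is classical. Condition on the bulk: its maximum (and, for $\delta_{2k}^{\#}$, its minimum) lies $t$ standard deviations out, and after reparametrising by $u:=(n-2k)\overline{\Phi}(t)$ the effective weighting on $u$ becomes the $\mathrm{Gamma}(2k+1)$ density $\propto u^{2k}e^{-u}$, which forces $t^2=2\log(n/(2k))+O(\log\log(n/k))$. Each anticorrelated pair, being independent of everything else, contributes relative to the i.i.d.\ baseline the bivariate-Gaussian tail ratio $\mathbb{P}_{\mathrm{corr}=-\rho}(\text{both}>t)/\mathbb{P}_{\mathrm{corr}=0}(\text{both}>t)\sim\tfrac{(1-\rho)^{3/2}}{(1+\rho)^{1/2}}e^{-\rho t^2/(1-\rho)}$; for $\delta_{2k}^{\#}$ the ``one high, one low'' configuration replaces the correlation $-\rho$ by $+\rho$ and the ratio by its reciprocal. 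Multiplying over the $k$ pairs and dividing by the uniform value $(n-2k)!/n!$ yields the bias factor $\exp\!\big(\mp\tfrac{2k\log 2}{\log q}\log(n/(2k))\,(1+o(1))\big)$, which, for $q$ large and the absolute constant $A$ large enough that $\log(n/k)\geq\log A$ dominates the lower-order terms, is at most $\exp(-\tfrac{k\log(n/k)}{2\log q})$ (respectively at least its reciprocal), since $2\log 2>\tfrac12$ with room to spare. Combined with the Gaussian approximation, this is Theorem~\ref{kExtremeBias}.
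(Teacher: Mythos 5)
Your overall architecture coincides with the paper's: the Rubinstein--Sarnak reduction, competitor classes built from $k$ antipodal pairs $(b,-b)$ whose normalised covariance is $\sim-\tfrac{\log 2}{\log q}$ together with a bulk whose remaining correlations are negligible (the paper does this deterministically, taking the $b_i$ to be primes in $(5n\log^2 q,10n\log^2 q]$ and quoting Lamzouri's asymptotic for $B_q(a,a')$, rather than your ``almost all tuples'' counting argument), then a Lindeberg-type moderate-deviation Gaussian approximation against smoothed ordering indicators, and finally a Gaussian tail computation producing the bias $\exp\bigl(\mp\Theta(k\log(n/k)/\log q)\bigr)$. Your endgame (conditioning on the bulk maximum, the Gamma-type weight in $u=(n-2k)\overline{\Phi}(t)$, and the bivariate tail ratio per pair) is a legitimate variant of the paper's route, which instead bounds the joint density pointwise on the event that $Z_1,\dots,Z_{2k}$ all exceed the bulk maximum and compares with the independent case; your version even identifies the sharper effective threshold $t^2\approx 2\log(n/(2k))$ where the paper contents itself with $\sqrt{\log(n/k)}$.

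However, two substantive gaps remain. First, the step you yourself flag as the main obstacle is precisely where the paper's key new idea lives, and your sketch does not supply it: a standard Lindeberg/Berry--Esseen argument with generic smoothing yields an \emph{absolute} error (a fixed power of $q$ saving, polynomial in the dimension), which is useless against target probabilities of size $(n-2k)!/n!\approx n^{-2k}$ with $n$ as large as $\varphi(q)^{1/50}$ and $k\asymp n$. The paper obtains a genuinely relative error by constructing test functions $h^{\pm}_{S,\delta}$ whose first and third partial derivatives are pointwise dominated by constant multiples of the function itself, so that each swap's Taylor error is proportional to the expectation being estimated; without some such device the multiplicative $1+o(1)$ you assume is unsubstantiated. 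Second, your error budget ``accumulated error $=o(\text{bias})$'' cannot be met by bounding bad events separately in the full range of the theorem: when $k\log(n/k)\ll\log\log q$ the claimed factor in \eqref{SmallBias}--\eqref{LargeBias} is $1\mp\Theta(k\log(n/k)/\log q)$, and even contributions that are exponentially small in $k\log(n/k)$ (an unusually low bulk maximum, your atypical-$t$ range) exceed the bias. The paper handles this regime by a separate argument in which the correlated and independent models are compared on the bad events so that these contributions cancel up to $O\bigl(\tfrac{1}{\log q}\tfrac{(n-2k)!}{n!}\bigr)$; you would need the analogous differencing inside your conditioning computation, and this is not indicated in the proposal.
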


\begin{proof}[Proof of Theorem \ref{mainbiasthm}, assuming Theorem \ref{kExtremeBias}]
Suppose first that $n \leq \varphi(q)^{1/50}$. Take $k= \lceil n/2A \rceil$, (which indeed satisfies $1 \leq k \leq n/A$ since we assume in Theorem \ref{mainbiasthm} that $n$ is large), and note that by Theorem \ref{kExtremeBias} we have
$$ \delta_{2k}(q;a_1, \dots, a_n) \leq \exp\left(-\frac{n \log(2A)}{4A \log q}\right) \frac{(n-2k)!}{n!} . $$
On the other hand, since the logarithmic density of the set of real numbers $x\geq 2$ for which $\pi(x;q,a)=\pi(x;q,b)$ is $0$ (which follows from equation \eqref{DensityMeasure} below), we get
$$ \delta_{2k}(q; a_1, \dots, a_n)= \sum_{\sigma\in S_{n-2k}}\delta(q; a_1, a_2, ..., a_{2k}, a_{\sigma(2k+1)}, \dots, a_{\sigma(n)}) , $$
where we think of the symmetric group $S_{n-2k}$ as the group of bijections of the set $\{2k+1, 2k+2, ..., n\}$ to itself.
Thus, by averaging, there exists $\sigma\in S_{n-2k}$ for which 
$$ \delta(q; a_1, a_2, ..., a_{2k}, a_{\sigma(2k+1)}, \dots, a_{\sigma(n)}) \leq \exp\left(-\frac{n \log(2A)}{4A \log q}\right) \frac{1}{n!} , $$
which gives the first part of Theorem \ref{mainbiasthm} on taking $C = 4A/\log(2A)$.

Similarly for the second part of the theorem, taking $k= \lceil n/2A \rceil$ we have
\begin{eqnarray}
&& \sum_{\sigma\in S_{n-2k}} \delta(q; a_1, a_3, ..., a_{2k-1}, a_{\sigma(2k+1)}, \dots, a_{\sigma(n)}, a_{2k}, ..., a_4, a_2) = \delta_{2k}^{\#}(q;a_1, \dots, a_n) \nonumber \\
& \geq & \exp\left(\frac{n \log(2A)}{4A \log q}\right) \frac{(n-2k)!}{n!} , \nonumber
\end{eqnarray}
so by averaging there exists $\sigma$ with $\delta(q; a_1, a_3, ..., a_{2k-1}, a_{\sigma(2k+1)}, \dots, a_{\sigma(n)}, a_{2k}, ..., a_4, a_2) \geq \exp\left(\frac{n \log(2A)}{4A \log q}\right) \frac{1}{n!}$.

Finally, if $\varphi(q)^{1/50} < n \leq \varphi(q)$ then set $m := \lfloor \varphi(q)^{1/50} \rfloor$ and assume that $q$ is so large that $m\ge 2$,
hence $m\ge \frac12 \phi(q)^{1/50}$.
As in the previous discussion there exists an $m$-tuple $(a_1,\dots,a_m)\in\mathcal{A}_m(q)$ for which
$$ \delta(q;a_1, \dots, a_m) \leq \exp\left(-\frac{\varphi(q)^{1/50}}{C \log q}\right) \frac{1}{m!} , $$
and there exists an $m$-tuple $(b_1,\dots,b_m)\in\mathcal{A}_m(q)$ for which
$$ \delta(q;b_1, \dots, b_m) \geq \exp\left(\frac{\varphi(q)^{1/50}}{C \log q}\right) \frac{1}{m!}, $$
with $C=8A/\log(2A)$.
Then if we choose any other coprime residues $a_{m+1}, ..., a_n$ mod $q$, we have
$$ \delta(q;a_1, \dots, a_m) = \sum_{\substack{\sigma \in S_{n} : \\ \sigma^{-1}(1) > \sigma^{-1}(2) > ... > \sigma^{-1}(m)}} \delta(q; a_{\sigma(1)}, a_{\sigma(2)}, \dots, a_{\sigma(n)}) . $$
There are $n!/m!$ terms in the sum, so it follows that for at least one permutation $\sigma$ we must have $\delta(q; a_{\sigma(1)}, a_{\sigma(2)}, \dots, a_{\sigma(n)}) \leq \exp\left(-\frac{\varphi(q)^{1/50}}{C \log q}\right) \frac{1}{n!}$, as desired. The analogous lower bound with the $b_i$ is proved exactly similarly.
\end{proof}

We conclude this introduction by discussing some of the ideas from our proofs. As we shall recall in section \ref{prelimsec}, under GRH and LI the logarithmic density $\delta(q;a_1, \dots, a_n)$ is the same as an ordering probability for certain random variables $X(q,a_1), ..., X(q,a_n)$. The ultimate source of the biases in our theorems is the fact that some of these random variables are not independent of one another, but have correlations of size $\xi \sim - \frac{\log 2}{\log q}$. Notice this is a completely different source of bias than the influence of being a quadratic residue or non-residue, which leads to Chebyshev's bias for small $q$ because the {\em mean values} of the corresponding $X(q,a_i)$ are slightly unequal.

As we mentioned, Harper and Lamzouri~\cite{HL} proved a non-uniformity result for the auxiliary quantities $\delta_k(q;a_1, \dots, a_n)$, and then deduced their non-uniformity result for $\delta(q;a_1, \dots, a_n)$ by averaging. The advantage of looking at $\delta_k(q;a_1, \dots, a_n)$ is that in the corresponding ordering probability, namely
$$ \p(X(q,a_1) > X(q,a_2) > ... > X(q,a_k) > \max_{k+1 \leq j \leq n} X(q,a_j)) , $$
the maximum of the $X(q,a_j)$ (when they are normalised by their standard deviations) is close to $\sqrt{2\log n}$ with very high probability. This means that $X(q,a_1), ..., X(q,a_k)$ are all larger than $\sqrt{2\log n}$ with very high probability, so if we can arrange to have negative correlations $\sim - \frac{\log 2}{\log q}$ between $\asymp k$ of those random variables, we obtain a bias $\asymp - \frac{1}{\log q} k (\sqrt{2\log n})^2 \asymp - \frac{k\log n}{\log q}$ in the exponent in the probability distribution function on that event.

Here we follow the same broad strategy, with two main changes. Firstly, in order to produce biases towards both large and small probabilities (unlike in \cite{HL} which only handled the small case) we work with the auxiliary density $\delta_{2k}^{\#}(q;a_1, \dots, a_n)$, in addition to $\delta_{2k}(q;a_1, \dots, a_n)$. In the ordering probability corresponding to $\delta_{2k}^{\#}(q;a_1, \dots, a_n)$, we have $X(q,a_1), X(q,a_3), ..., X(q,a_{2k-1}) > \sqrt{2\log n}$ and $X(q,a_2), X(q,a_4), ..., X(q,a_{2k}) < - \sqrt{2\log n}$ with very high probability (roughly speaking, when everything is normalised). So if we can arrange to have negative correlations $\sim - \frac{\log 2}{\log q}$ between pairs of the odd and even indexed $X(q,a_i)$, we obtain a positive bias $\asymp - \frac{1}{\log q} k \sqrt{2\log n} (-\sqrt{2\log n}) \asymp \frac{k\log n}{\log q}$ in the probability distribution function on that event. Secondly, whereas Harper and Lamzouri~\cite{HL} worked with fixed $k$, here we allow $k$ to be as large as a small constant times $n$. Thus, whereas Harper and Lamzouri obtain biases in the probability distribution of size at most $\asymp \frac{\log n}{\log q}$, we can obtain large biases of size $\asymp \frac{n}{\log q}$.

To implement the above strategy with $k$ large, there are two technical issues that must be overcome. Firstly, if we have arranged some correlations of size $\xi$ to produce a bias, we need to ensure that any other correlations do not interfere with that bias. Harper and Lamzouri~\cite{HL} did this using a ``large sieve'' kind of average estimate for correlations, but here we take a different approach by working with specially chosen residues $a_1, ..., a_{n}$ where the only correlations, except those of size $\xi$, are extremely small. This is perfectly acceptable if one only wants to exhibit some residues producing large biases, and makes the argument simpler (we don't need any sophisticated tools from Gaussian process theory) and possible to execute for very large $k$.

Secondly, in the above discussion and in our proofs we want to treat the $X(q,a_i)$ as jointly Gaussian random variables. As we shall discuss in section \ref{prelimsec}, in fact the $X(q,a_i)$ are a sum of independent random variables, and have variance $\asymp \varphi(q) \log q$, so we expect standard Berry--Esseen type ideas to produce a multivariate Gaussian approximation with an error term saving a polynomial in $q$, and with a polynomial dependence on the dimension $n$. Now comparing with Theorem \ref{mainbiasthm}, we are dealing with probabilities of size $1/n! = e^{-n\log n + O(n)}$ with $n$ as large as a power of $q$, which is much smaller than the possible Berry--Esseen saving. To get around this, we need a multivariate Gaussian approximation with a {\em relative} error term rather than an absolute one, which can therefore be useful even for very improbable events. This kind of multivariate ``moderate deviation'' result does exist in the probabilistic literature, but we were unable to locate a result that applied to our situation without quite a lot of reworking. (See Theorem 1 of Bentkus~\cite{bentkus} for a sample of what is available--- Bentkus obtains a moderate deviation Gaussian approximation for the norm of a sum of independent, identically distributed random elements in a Hilbert space.) Thus we prove our own bespoke result (see Proposition \ref{moddevprop} and Proposition \ref{normapproxpnr}, below), using a special construction of smooth test functions and an inductive ``replacement'' argument.

\subsection{Notation}

We adopt familiar order of magnitude notation of Bachmann--Landau, Vinogradov and Knuth.  The notations $f=O(g)$, 
$f\ll g$ and $g\gg f$ mean that  there exists a positive constant $C$
such that $|f| \le C g$ throughout the range of $f$ (either implicitly
understood or explicitly given).  The notations $f \order g$, $f= \Theta(g)$
mean that both $f\ll g$ and $f\gg g$ hold.  We have $f(x) \sim g(x)$ as 
$x\to a$ if $\lim_{x\to a} f(x)/g(x) = 1$, where $a$ may be finite, $\infty$
or $-\infty$.

We use $\PP$ and $\EE$ to denote probability and probabilistic expectation, respectively.  The underlying probability spaces will be described
explicitly or understood from context.

\section{Preliminaries on prime number races, and corresponding random variables}\label{prelimsec}
In this section, we review the connection between quantities like $\delta(q;a_1, \dots, a_n)$ and ordering probabilities for suitable random variables. See section 2 of Harper and Lamzouri~\cite{HL} for a similar but more detailed review of this material. We also show the existence of special sets $\cA$ of residues mod $q$, for which the correlation structure of the corresponding random variables has nice properties that we can exploit.

Given distinct reduced residues $a_1,\dots, a_n$ modulo $q$, we define
$$ E_{q;a_1,\dots,a_n}(x):=\Big(E(x;q,a_1), \dots, E(x;q,a_n)\Big),$$
where
$$ E(x;q,a):=\frac{\log x}{\sqrt{x}}\left(\varphi(q)\pi(x;q,a)-\pi(x)\right) , $$
and $\pi(x)$ denotes the total number of primes less than $x$. It turns out that the normalization is such that, if we assume GRH, $E_{q;a_1,\dots,a_n}(x)$ varies roughly boundedly as $x$ varies. Notice also that
$$ \pi(x;q,a_1) > \pi(x;q,a_2) > \cdots > \pi(x;q,a_n) \;\; \iff \;\; E(x;q,a_1) > E(x;q,a_2) > \cdots > E(x;q,a_n) . $$

Let us introduce the temporary notation
$$ \delta_{q;a_1,\dots,a_n}(S) := \lim_{X\to\infty} \frac{1}{\log X} \int_{\substack{x\in [2,X] \\ E_{q;a_1,\dots,a_n}(x)\in S}} \frac{dx}{x} . $$
The work of Rubinstein and Sarnak \cite{RuSa} shows, under GRH and LI, that for any Lebesgue measurable set $S\subset \mathbb{R}^n$ whose boundary has measure zero, the logarithmic density $\delta_{q;a_1,\dots,a_n}(S)$ exists.

Furthermore, Rubinstein and Sarnak provide a probabilistic characterisation of $\delta_{q;a_1,\dots,a_n}(S)$. For a non-principal Dirichlet character $\chi$ modulo $q$, denote by $\{\gamma_{\chi}\}$ the sequence of imaginary parts of the nontrivial zeros of  $L(s,\chi)$. If we assume LI then all of the non-negative values of $\gamma_{\chi}$ are linearly independent over $\QQ$, and in particular are distinct. Let $\chi_0$ denote the principal character modulo $q$ and define $\Gamma= \bigcup_{\chi\neq \chi_0\bmod q}\{\gamma_{\chi}\}$. Furthermore, let  $\{U(\gamma_{\chi})\}_{\gamma_{\chi}\in \Gamma, \gamma_\chi > 0}$ be a sequence of independent random variables uniformly distributed on the unit circle. Then we have
\begin{equation}\label{DensityMeasure}
\delta_{q;a_1,\dots,a_n}(S)= \int_{S} d\mu_{q;a_1,\dots,a_n},
\end{equation}
where $\mu_{q;a_1,\dots,a_n}$ is a certain probability measure
on $\RR^n$ (which is absolutely continuous when $n < \varphi(q)$).
Specifically, $\mu_{q;a_1,\dots,a_n}$ is
 the probability measure corresponding to the $\RR^{n}$-valued random vector $\big(X(q,a_1),\dots,X(q,a_n)\big)$,
where
\be\label{Xqa}
 X(q,a):= -C_q(a)+ \sum_{\substack{\chi\neq \chi_0\\ \chi\pmod q}}\re \left(2\chi(a)\sum_{\gamma_{\chi}>0} \frac{U(\gamma_{\chi})}{\sqrt{\frac14+\gamma_{\chi}^2}}\right),
 \ee
with 
\be\label{Cqa}
C_q(a):=-1+ |\{b \pmod q: b^2\equiv a \pmod q\}|.
\ee
It will turn out that the deterministic shifts $C_q(a)$ are sufficiently small that they can essentially be ignored when $q \rightarrow \infty$.

Let $\text{Cov}_{q;a_1,\dots,a_n}$ be the covariance matrix of $\big(X(q,a_1),\dots,X(q,a_n)\big)$. We need to understand this in order to understand the probabilities and dependencies of events involving the $X(q,a_i)$. Assuming GRH, one may compute (see section 2 of Harper and Lamzouri~\cite{HL} for fuller details and references) that
$$\textup{Cov}_{q;a_1,\dots,a_n}(i,j)= \begin{cases}  \var(q) &\text{ if } i=j \\ B_q(a_i,a_j) &\text{ if } i\neq j,\end{cases}$$
where
\begin{equation}\label{AsympVariance}
\var(q):=2\sum_{\substack{\chi\neq \chi_0\\ \chi\pmod q}}\sum_{\gamma_{\chi}>0}\frac{1}{\frac14+\gamma_{\chi}^2} \sim  \varphi(q)\log q \;\;\;\;\; \text{as} \; q \rightarrow \infty ,
\end{equation}
and 
\begin{equation}\label{BoundCovariance}
B_q(a,b):=\sum_{\substack{\chi\neq \chi_0 \\ \chi\pmod q}}\sum_{\gamma_{\chi}>0}\frac{\chi\left(\frac{b}{a}\right)+\chi\left(\frac{a}{b}\right)}{\frac14 +\gamma_{\chi}^2}\ll \varphi(q), \;\;\;\;\; a \neq b .
\end{equation}

In view of equations \eqref{AsympVariance} and \eqref{BoundCovariance}, we have the normalised correlation bound $\frac{B_q(a,b)}{\var(q)} \ll \frac{1}{\log q}$ for all distinct residues $a,b$ mod $q$. This bound can be attained by certain pairs $a,b$ (though only when $B_{q}(a,b)$ is negative), but otherwise it can be improved, depending on the arithmetic properties of $a,b$ and $q$. Understanding this in general is somewhat complicated, (see section 3 of Harper and Lamzouri~\cite{HL}, for example), but here we can afford to restrict to special sets of residues where the behaviour is nice. The following lemma will provide us with such sets.

\begin{lem}\label{correlation-2}
Assume GRH. Let $q$ be large and let $1\le k\le  n < q/(20\log^2 q)$.  Then there is a set $\cA=\{b_1,-b_1,b_2,-b_2,\ldots,b_k,-b_k,b_{k+1},\ldots,b_n\}$ of reduced residues modulo $q$, with $b_1,\ldots,b_{n}$ distinct integers in $\{1,\ldots,\fl{q/2} \}$,  and satisfying the correlation bounds
\[
 B_q(a,a') = -\varphi(q)(\log 2) \ell_q(a,a') + O (n\log^4 q)
\]
for every pair of distinct elements $a,a'\in \cA$, where $\ell_q(a,a')=1$ if $a+a' \equiv 0$ mod $q$, and $\ell_q(a,a')=0$ otherwise.
\end{lem}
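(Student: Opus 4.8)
The plan is to derive a sufficiently explicit formula for $B_q(a,b)$ and then build the residues $b_i$ by a greedy argument that forces all the ``unwanted'' contributions to be negligible.

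\emph{Step 1: a formula for $B_q(a,b)$.} Under GRH each inner sum in \eqref{BoundCovariance} depends only on the primitive character $\chi^*$ of some conductor $d\mid q$ inducing $\chi$, and (using $\chi(a/b)=\overline{\chi(b/a)}$, grouping by conductor, and pairing each $\chi^*$ with $\overline{\chi^*}$ to extend the sum over zeros) one gets
$$B_q(a,b)=\sum_{\substack{d\mid q\\ d>1}}\ \sum_{\chi^*\ \mathrm{prim.}\,(d)}\re\chi^*(ba^{-1})\sum_{\gamma_{\chi^*}}\frac{1}{\tfrac14+\gamma_{\chi^*}^2}.$$
The Hadamard factorisation of $L(s,\chi^*)$ at $s=1$, together with GRH (all zeros $\tfrac12+i\gamma$), yields
$$\sum_{\gamma_{\chi^*}}\frac{1}{\tfrac14+\gamma_{\chi^*}^2}=\log\frac d\pi+\frac{\Gamma'}{\Gamma}\!\Big(\frac{1+\mathfrak a(\chi^*)}{2}\Big)-2\re\frac{L'}{L}(1,\chi^*),$$
where $\mathfrak a(\chi^*)\in\{0,1\}$ records the parity of $\chi^*$. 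Substituting this splits $B_q(a,b)$ into an archimedean piece, a ``conductor'' piece, and a ``prime'' piece, which I would treat as follows. Using $\sum_{\chi^*\ \mathrm{prim.}\,(d)}\chi^*(x)=\sum_{e\mid\gcd(d,x-1)}\mu(d/e)\varphi(e)$, the parity weight $[\chi^*\ \mathrm{even}]=\tfrac12(1+\chi^*(-1))$, and $\frac{\Gamma'}{\Gamma}(\tfrac12)-\frac{\Gamma'}{\Gamma}(1)=-2\log 2$, the archimedean piece collapses to $-\varphi(q)(\log 2)\,\ell_q(a,b)+O(1)$, the factor $\varphi(q)\ell_q(a,b)$ coming from $\sum_{\chi\,(q)}\chi(-ba^{-1})=\varphi(q)\mathbf 1_{ba^{-1}\equiv-1\,(q)}$; this parity effect is the unique source of the main term. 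By the weighted identity $\sum_{d\mid q}(\log d)\sum_{\chi^*\,\mathrm{prim.}(d)}\chi^*(x)=-\sum_{e\mid\gcd(q,x-1)}\varphi(e)\Lambda(q/e)$ (for $x\not\equiv1\,(q)$), the conductor piece equals $-\sum_{e\mid\gcd(q,\,ba^{-1}-1)}\varphi(e)\Lambda(q/e)+O(1)$, which is $O(\log^2 q)$ as soon as $ba^{-1}\not\equiv1$ modulo $q/p^\ell$ for every prime power $p^\ell\le q/(n\log^3 q)$ dividing $q$. Finally, writing $\frac{L'}{L}(1,\chi^*)=\frac{L'}{L}(1,\chi)+\sum_{p\mid q,\,p\nmid d}\frac{\chi^*(p)\log p}{p-\chi^*(p)}$ and using the standard GRH bound for $\sum_{n\le x}\Lambda(n)\chi(n)/n$, the prime piece becomes a smoothly truncated sum over prime powers: its ``primitive'' part is $O\!\big(\varphi(q)\sum_{p^\ell\ge2,\ p^\ell\equiv\pm ba^{\pm1}\,(q)}\tfrac{\log p}{p^\ell}\big)+O(\log^2 q)$, small once $\pm ba^{\pm1}$ avoid all prime powers up to $\asymp\varphi(q)/n$ modulo $q$; and its ``imprimitive'' part reduces, by the same character-sum evaluations, to $\sum_{p\mid q}(\log p)\sum_{\ell\ge1}p^{-\ell}\big(\tfrac12\varphi(q^{(p)})([p^\ell\equiv ba^{-1}\,(q^{(p)})]+[p^\ell\equiv ab^{-1}\,(q^{(p)})])-1\big)$, where $q^{(p)}$ is the part of $q$ coprime to $p$; this is $O(\log q)$ plus a term that is large only if some small $p^\ell$ is $\equiv\pm ba^{\pm1}$ modulo $q^{(p)}$.

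\emph{Step 2: the construction.} There are exactly $\varphi(q)/2\gg q/\log\log q$ residues in $\{1,\dots,\lfloor q/2\rfloor\}$ coprime to $q$, far more than $n$. I would choose $b_1,\dots,b_n$ successively from this set, at step $t$ discarding a candidate $b_t$ if it (i) equals some earlier $b_j$ or some $q-b_j$; (ii) lies in one of the $\le2$ classes mod $q/p^\ell$ (for $j<t$ and $p^\ell\mid q$, $p^\ell\le q/(n\log^3 q)$) with $b_t\equiv\pm b_j$; (iii) lies in one of the $\le4$ classes mod $q$ (for $j<t$ and prime powers $p^\ell\le\varphi(q)/8n$) with $\pm b_tb_j^{\pm1}\equiv p^\ell$; or (iv) lies in one of the $\le2$ classes mod $q^{(p)}$ (for $j<t$, $p\mid q$, and $\ell$ below a $p$-dependent cutoff $\asymp q\log p/(n\,p^{v_p(q)}\log^4 q)$) with $\pm b_tb_j^{\pm1}\equiv p^\ell$ there. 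Bounding each class's intersection with $\{1,\dots,\lfloor q/2\rfloor\}$ (a class mod $m\mid q$ contributes $\le q/m+1$ points), using $\sum_{p\mid q}\log_p(\cdot)\ll\log^2 q$ for the number of relevant prime powers, and for (iv) the elementary bound $x\log(A/x)\ll A$ summed over $p\mid q$, one checks that the number of candidates discarded at step $t$ is at most $\varphi(q)/4+o(\varphi(q))<\varphi(q)/2$, using $n<q/(20\log^2 q)$; hence $b_t$ exists at every step. Set $\cA=\{b_1,-b_1,\dots,b_k,-b_k,b_{k+1},\dots,b_n\}$: by (i) these are $2k+(n-k)$ distinct residues, and the only pairs $a\ne a'$ in $\cA$ with $a+a'\equiv0$ are the $(b_i,-b_i)$ with $i\le k$.

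\emph{Step 3: verification, and the main difficulty.} For a pair $(b_i,-b_i)$ one has $ba^{-1}=-1$, so $\ell_q=1$, the conductor piece is $-\sum_{e\mid\gcd(q,2)}\varphi(e)\Lambda(q/e)=O(\log q)$, and the prime piece is $O(\log^2 q)$ since every prime power $\equiv\pm1\pmod q$ exceeds $q-1$; thus $B_q(b_i,-b_i)=-\varphi(q)\log 2+O(\log^2 q)$. For every other pair $a=\varepsilon_1b_i$, $a'=\varepsilon_2b_j$ with $i\ne j$ we have $\ell_q=0$, while (ii)--(iv) force the conductor piece and both parts of the prime piece to be $O(n\log^4 q)$; as $\log^2 q\le n\log^4 q$ this gives $B_q(a,a')=-\varphi(q)(\log 2)\,\ell_q(a,a')+O(n\log^4 q)$ in all cases. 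The hard part is Step 1, and within it the imprimitive part: a crude bound on $\sum_{p\mid q,\,p\nmid d}\frac{\chi^*(p)\log p}{p-\chi^*(p)}$ summed over all $d\mid q$ yields only $O(\varphi(q))$, far too weak, so one must evaluate the character sums exactly (reducing them to congruences modulo $q^{(p)}$), observe that the only dangerous contributions come from prime powers $p^\ell$ that are small modulo $q^{(p)}$, and check that these are suppressed by the $O(\log^2 q)$ extra congruence conditions of (iv) without exhausting the available residues; one may alternatively quote a version of this estimate from \S3 of \cite{HL}.
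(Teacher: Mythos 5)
Your argument is correct in outline, but it follows a genuinely different and much heavier route than the paper. The paper's proof is two lines: take $b_1,\dots,b_n$ to be distinct primes in $(5n\log^2 q,\,10n\log^2 q]$ not dividing $q$ (the prime number theorem gives $\geq 4n\log q$ primes there, and removing the $\leq \log q$ prime factors of $q$ still leaves enough), so that every ratio $|a|/|a'|$ with $a,a'\in\cA$ lies in $(1/2,2)$; then quote Proposition 6.1 of Lamzouri \cite{L3}, in which the only possible main term besides $-\varphi(q)(\log 2)\ell_q(a,a')$ is a $\Lambda_0(\cdot)$ term that vanishes for ratios in $(1/2,2)$, and whose error is $O(n\log^4 q)$ because the $b_i$ have size $O(n\log^2 q)$. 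You instead essentially re-derive that proposition (the Fiorilli--Martin/Lamzouri explicit-formula evaluation of $B_q(a,b)$, split into archimedean, conductor, primitive-prime and imprimitive-prime pieces) in a form valid for arbitrary reduced residues, and then run a greedy sieve so that the $b_i$ dodge every congruence that could create an unwanted main term; your counting does close, since each of (i)--(iv) discards $o(\varphi(q))$ of the $\varphi(q)/2$ available residues under $n<q/(20\log^2 q)$. What your route buys is flexibility (the $b_i$ need not be small primes, and one sees such sets are plentiful); what it costs is redoing a substantial analytic computation that the paper imports from the literature --- indeed your own fallback of quoting \S 3 of \cite{HL} is in effect what the paper does via \cite{L3}. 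Two small repairs: in your zero-sum identity the last term should be $+2\re\frac{L'}{L}(1,\chi^*)$, not $-2\re\frac{L'}{L}(1,\chi^*)$ (harmless here, since you only use those pieces in absolute value and the $\log 2$ main term comes from the digamma/parity computation, which you carry out correctly); and in the imprimitive piece, after the avoidance in (iv) each prime $p\mid q$ can still contribute up to $\asymp n\log^4 q$, so summing over the up to $\log q$ primes dividing $q$ only gives $O(n\log^5 q)$ unless you tighten the cutoff in (iv) by a factor $\log q$ --- the discarded-residue count then remains $o(\varphi(q))$, so this is pure bookkeeping rather than a gap.
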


\begin{proof}[Proof of Lemma \ref{correlation-2}]
Let $b_1, ..., b_n$ be any distinct primes in the interval $(5n\log^{2}q, 10n\log^{2}q]$ that do not divide $q$. By the prime number theorem, this interval contains $\sim \frac{5n\log^{2}q}{\log(10n\log^{2}q)} \geq 4n\log q$ primes, provided $q$ is large enough. And since $q$ has at most $\log q$ prime factors, if we remove all those we are still left with at least $3n\log q \geq n$ primes (say), so we can indeed choose $b_1, ..., b_n$ in this way.

The claimed estimate for $B_q(a,a')$ now follows from Proposition 6.1 of Lamzouri~\cite{L3}, noting that we have $1/2 < \frac{|a|}{|a'|} < 2$ for all $a,a'\in \cA$, so the term $\Lambda_{0}(\cdot)$ in that proposition always vanishes.
\end{proof}


\section{Gaussian approximation}\label{gaussiansec}
In this section, we prove results that will allow us to replace the actual random variables $X(q,a_i)$ arising from prime number races by Gaussian random variables with the same covariance structure. This will be important because jointly Gaussian random variables have an explicit probability density, depending only on the means and covariances, which we can work with to analyse the probabilities of events.

\subsection{Smooth test functions}\label{smoothtestsubsec}
We begin by constructing smooth weight functions that closely approximate indicators of the ``ordering'' events that we are interested in. Smooth weights will allow us to use Taylor expansion as a tool when we come to our Gaussian approximation.

\begin{lem}\label{smooth1d}
There exists an increasing function $\theta : \RR \rightarrow (0,1)$ that is three times continuously differentiable, and satisfies
$$ \theta(-x) \ll e^{-x} , \;\;\;\;\; 1 - \theta(x) \ll e^{-x} \;\;\;\;\; \forall x \geq 0 , $$
as well as
$$ |\frac{d^{m}}{d x^m} \theta(x)| \ll \theta(x) \;\;\; \forall 1 \leq m \leq 3, \; \forall x \in \RR . $$
\end{lem}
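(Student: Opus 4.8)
The plan is to construct $\theta$ explicitly from the logistic-type function $\sigma(x) = 1/(1+e^{-x})$, or a closely related primitive, and then verify the three required properties by direct computation. The tail bounds are immediate for the logistic function, since $\sigma(-x) = e^{-x}/(1+e^{-x}) \le e^{-x}$ and $1 - \sigma(x) = e^{-x}/(1+e^{-x}) \le e^{-x}$ for $x \ge 0$, and $\sigma$ is manifestly smooth and strictly increasing with values in $(0,1)$. The subtle point is the derivative bound $|\theta^{(m)}(x)| \ll \theta(x)$ for $1 \le m \le 3$: for the logistic function one has $\sigma' = \sigma(1-\sigma)$, $\sigma'' = \sigma(1-\sigma)(1-2\sigma)$, and $\sigma''' = \sigma(1-\sigma)(1 - 6\sigma + 6\sigma^2)$, so each derivative is $\sigma(x)$ times a bounded polynomial in $\sigma(x) \in (0,1)$, hence $|\sigma^{(m)}(x)| \ll \sigma(x)$ for all $x$. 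Thus $\theta = \sigma$ already works, and I would simply present this verification cleanly.

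First I would define $\theta(x) := 1/(1+e^{-x})$ and record that it is $C^\infty$ (in particular three times continuously differentiable), strictly increasing, and takes values in $(0,1)$. Second, I would establish the two tail estimates by the elementary inequalities above. Third, I would compute the first three derivatives in terms of $\theta$ itself, observing inductively (or just by the three explicit formulas) that $\theta^{(m)}(x) = \theta(x) P_m(\theta(x))$ for a polynomial $P_m$ with $\deg P_m = m$; since $\theta(x) \in (0,1)$, each $P_m(\theta(x))$ is bounded by a constant depending only on $m \le 3$, giving $|\theta^{(m)}(x)| \le C_m \theta(x)$. That is the entire argument.

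I do not anticipate any real obstacle here: the lemma is a soft existence statement and the logistic function is essentially tailor-made for it. The only thing to be a little careful about is the direction of the tail decay — one wants decay of $\theta$ to the left and of $1-\theta$ to the right — and to make sure the derivative bound is stated with $\theta(x)$ (not $\theta(x)(1-\theta(x))$, which would be a stronger two-sided statement that is also true but not needed). If one preferred an explicit closed form avoiding any appeal to ``clearly smooth,'' one could alternatively take $\theta$ to be a suitably normalised integral of $e^{-x}/(1+e^{-x})^2 = \sigma'$, but that is just $\sigma$ again. So I would go with the logistic function directly.
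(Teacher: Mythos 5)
Your proof is correct. The logistic function $\theta(x)=1/(1+e^{-x})$ does satisfy every requirement: it is $C^\infty$, increasing, valued in $(0,1)$, the tail bounds are immediate from $\theta(-x)=1-\theta(x)=e^{-x}/(1+e^{-x})\le e^{-x}$ for $x\ge 0$, and the identities $\theta'=\theta(1-\theta)$, $\theta''=\theta(1-\theta)(1-2\theta)$, $\theta'''=\theta(1-\theta)(1-6\theta+6\theta^2)$ show $|\theta^{(m)}(x)|\le C_m\,\theta(x)$ for all $x$, since each factor multiplying $\theta$ is a bounded polynomial in $\theta(x)\in(0,1)$. The paper's construction is in the same spirit but uses a different explicit function: it takes $\theta$ to be the (normalised) antiderivative of a density equal to $e^{-|x|}$ for $|x|\ge 1$, patched by an explicit quadratic on $|x|<1$ so the density is $C^2$ and hence $\theta$ is $C^3$; the derivative bound is then checked by a case split, $|\theta^{(j)}(x)|=\theta(x)$ exactly for $x\le -1$ and $\theta(x)\gg 1 \gg |\theta^{(j)}(x)|$ for $x\ge -1$. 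Your choice buys smoothness to all orders and a cleaner, purely algebraic verification with no patching (and it yields $|\theta^{(m)}|\ll\theta$ for every $m$, not just $m\le 3$), while the paper's choice makes the two-sided exponential tail behaviour completely explicit, which is all the later arguments use. Either function serves equally well in the rest of the paper, so there is no gap.
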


\begin{proof}[Proof of Lemma \ref{smooth1d}]
We take $\theta(x)$ to be the probability distribution function corresponding to a density proportional to $e^{-|x|}$, since repeatedly differentiating the exponential continues to yield an exponential.
Let $f(x)=e^{-|x|}$ for $|x|\ge 1$, and when $|x|<1$, let $f(x)$ be an
appropriate quadratic polynomial which ensures that $f\in C^2(\RR)$;
the polynomial $\frac{7-4x^2+x^4}{4e}$ satisfies these requirements.  
If  $m_0 := \int_{-\infty}^{\infty} f(t) dt$, then clearly
$$ \theta(x) := \frac{1}{m_0} \int_{-\infty}^{x} f(t) dt $$
satisfies all the required properties of the lemma.  In particular,
for $x\le -1$, we have $|\theta^{(j)}(x)| = \theta(x)= \frac{1}{m_0} e^x$ 
for $j=1,2,3$
and for $x\ge -1$, $\theta(x) \ge 1/(em_0) \gg |\theta^{(j)}(x)|$
for $j=1,2,3$.
\end{proof}

By taking a product of (shifted and dilated) copies of the function $\theta(x)$, we can obtain multi-dimensional smooth test functions, which is what we shall
 actually need. We shall use the following notation: let $S \subseteq \{1,2,...,n\} \times \{1,2,...,n\}$ and let
$$ R(S) := \{(x_1 , ..., x_n) \in \RR^n : x_i > x_j \; \forall (i,j) \in S\} . $$
We denote by $\textbf{1}_{R(S)}(x_1, ..., x_n)$ the indicator function of the set $R(S)$.  We say that the set $S$ is \emph{admissible} if $R(S)$ is nonempty.
In particular, $S$ does not contain any diagonal pairs $(i,i)$.
For an admissible set $S$ and $\delta>0$ we also define the two functions
$h_{S,\delta}^+$ and  $h_{S,\delta}^-$ by
\be\label{hg}
 h_{S,\delta}^{\pm}(x_1,...,x_n) := \prod_{(i,j) \in S} g(x_i - x_j \pm \sqrt{\delta}) , \;\;\;\;\; \text{where} \; g(x) := \theta(x/\delta) .
 \ee
We next show that these functions are good approximations of
 $\textbf{1}_{R(S)}(x_1, ..., x_n)$.

\begin{pro}\label{smoothmultid}
Let $S$ be an admissible set.  Each of $h_{S,\delta}^-$ and $h_{S,\delta}^+$ is a three times continuously differentiable function from $\RR^{n}$ to $[0,\infty)$, satisfying
$$ h_{S,\delta}^{\pm}(\textbf{x}) \leq 1 , \;\;\; \sup_{1 \leq i \leq n} \Big|\frac{\partial}{\partial x_i} h_{S,\delta}^{\pm}(\textbf{x})\Big| \ll \frac{n}{\delta} h_{S,\delta}^{\pm}(\textbf{x}) , \;\;\; \sup_{1 \leq i,j,k \leq n} \Big|\frac{\partial^{3}}{\partial x_i \partial x_j \partial x_k} h_{S,\delta}^{\pm}(\textbf{x})\Big| \ll \pfrac{n}{\delta}^3 h_{S,\delta}^{\pm}(\textbf{x}).$$
Furthermore, provided that $e^{-1/\sqrt{\delta}} \leq 1/n^2$, we have
 $$ h_{S,\delta}^{-}(x_1,...,x_n) - O(e^{-1/\sqrt{\delta}}) \leq \textbf{1}_{R(S)}(x_1, ..., x_n) \leq h_{S,\delta}^{+}(x_1,...,x_n) + O(n^2 e^{-1/\sqrt{\delta}}). $$
\end{pro}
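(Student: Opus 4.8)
The plan is to prove each of the four assertions in Proposition \ref{smoothmultid} directly from the defining formula $h_{S,\delta}^{\pm}(\x) = \prod_{(i,j)\in S} g(x_i - x_j \pm \sqrt\delta)$ with $g(x) = \theta(x/\delta)$, using the properties of $\theta$ recorded in Lemma \ref{smooth1d}.

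First, since $\theta$ takes values in $(0,1)$, each factor $g(x_i - x_j \pm \sqrt\delta)$ lies in $(0,1)$, so the product is a (strictly positive) function bounded by $1$; three times continuous differentiability is immediate from the same property of $\theta$ together with the chain and product rules. For the derivative bounds, I would differentiate the product. A single partial $\partial/\partial x_i$ hits those factors $(i,j) \in S$ or $(j,i) \in S$ involving the index $i$; there are at most $n$ such factors. Each differentiated factor contributes a term $g'(\cdot)/g(\cdot) = (1/\delta)\theta'(\cdot)/\theta(\cdot)$ relative to $h_{S,\delta}^{\pm}$, and $|\theta'|\ll\theta$ gives $|\theta'/\theta|\ll 1$; summing over the $\le n$ relevant factors yields the bound $\ll (n/\delta) h_{S,\delta}^{\pm}$. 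For the third derivatives one applies the general product rule: $\partial^3/(\partial x_i\partial x_j\partial x_k)$ of a product produces a sum of terms, each of which is $h_{S,\delta}^{\pm}$ times a product of at most three logarithmic-derivative factors of the form $g^{(m)}(\cdot)/g(\cdot)$ with $m\in\{1,2,3\}$ and $\sum m \le 3$. Using $g^{(m)}(x) = \delta^{-m}\theta^{(m)}(x/\delta)$ and $|\theta^{(m)}|\ll\theta$ for $1\le m\le 3$, each such factor is $\ll 1/\delta$ times a quantity that is $\ll 1$; counting the number of index-triples of factors that can be hit (each of the three derivatives lands on one of $\le n$ relevant factors) gives at most $\order n^3$ terms, hence the bound $\ll (n/\delta)^3 h_{S,\delta}^{\pm}$. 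I should be slightly careful to note that when two or three of the derivatives hit the \emph{same} factor one gets a single higher-order term $g^{(2)}$ or $g^{(3)}$, still covered by the estimate.

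For the sandwiching inequality, the strategy is to compare $g(x) = \theta(x/\delta)$ with the step function $\mathbf 1_{x>0}$, using the exponential tail bounds $\theta(-x)\ll e^{-x}$ and $1-\theta(x)\ll e^{-x}$ from Lemma \ref{smooth1d}, with $x$ replaced by $\sqrt\delta$ (since the shift in $h^{\pm}$ is $\pm\sqrt\delta$, evaluating $g$ at an argument shifted by $\sqrt\delta$ probes $\theta$ at distance $\ge 1/\sqrt\delta$ from $0$ whenever $x_i - x_j$ has a definite sign, up to a window of width $\sqrt\delta$). Concretely: for the upper bound, if $\x\in R(S)$ then $x_i - x_j > 0$ for all $(i,j)\in S$, so $x_i - x_j + \sqrt\delta > \sqrt\delta$, hence $g(x_i - x_j + \sqrt\delta) = \theta((x_i-x_j)/\delta + 1/\sqrt\delta) \ge \theta(1/\sqrt\delta) \ge 1 - O(e^{-1/\sqrt\delta})$, and taking the product over the $\le n^2$ pairs in $S$ gives $h_{S,\delta}^{+}(\x) \ge (1 - O(e^{-1/\sqrt\delta}))^{|S|} \ge 1 - O(n^2 e^{-1/\sqrt\delta})$; if $\x\notin R(S)$ the bound $h_{S,\delta}^+ \ge 0 = \mathbf 1_{R(S)}$ is trivial since $\mathbf 1_{R(S)}(\x) = 0$. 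For the lower bound, if $\x\in R(S)$ then $h_{S,\delta}^- \le 1 = \mathbf 1_{R(S)}(\x)$ trivially, so it suffices to show that when $\mathbf 1_{R(S)}(\x) = 0$, i.e.\ $x_i \le x_j$ for some $(i,j)\in S$, we have $h_{S,\delta}^-(\x) \ll e^{-1/\sqrt\delta}$: indeed for that pair $x_i - x_j - \sqrt\delta \le -\sqrt\delta$, so $g(x_i-x_j-\sqrt\delta) = \theta((x_i-x_j)/\delta - 1/\sqrt\delta) \le \theta(-1/\sqrt\delta) \ll e^{-1/\sqrt\delta}$, and bounding all the other factors by $1$ finishes it. The hypothesis $e^{-1/\sqrt\delta}\le 1/n^2$ is what makes these error terms genuinely small (it is needed, e.g., to guarantee $1 - O(n^2 e^{-1/\sqrt\delta})$ is meaningful and for later applications).

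The main obstacle, such as it is, is purely bookkeeping: keeping the combinatorics of the product rule under control so that the constants in the derivative bounds come out as clean powers of $n/\delta$, and being careful about the two regimes (argument shifted into the "flat" part of $\theta$ versus into the exponentially small tail) in the sandwich estimate. There is no conceptual difficulty; the whole proposition is a routine consequence of Lemma \ref{smooth1d} and the chain/product rules, and I would present it as such, spelling out the product-rule term count just enough to make the dimensional dependence $n/\delta$ transparent.
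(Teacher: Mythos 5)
Your proposal is correct and follows essentially the same route as the paper: the derivative bounds come from the product rule together with $g^{(m)}(x)=\delta^{-m}\theta^{(m)}(x/\delta)\ll\delta^{-m}g(x)$, and the sandwich inequalities come from noting that off $R(S)$ one factor of $h^{-}_{S,\delta}$ is at most $\theta(-1/\sqrt{\delta})\ll e^{-1/\sqrt{\delta}}$, while on $R(S)$ every factor of $h^{+}_{S,\delta}$ is at least $\theta(1/\sqrt{\delta})=1-O(e^{-1/\sqrt{\delta}})$, so the product over the at most $n^2$ pairs is $1-O(n^{2}e^{-1/\sqrt{\delta}})$. The only difference is presentational: you spell out the product-rule term count that the paper delegates to a citation of Lemma 4.3 of Harper--Lamzouri.
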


An important feature of this result is the fact that the derivatives of $h_{S,\delta}^{\pm}$ are always controlled by $h_{S,\delta}^{\pm}$ itself, even at points where $h_{S,\delta}^{\pm}$ is very small. We will exploit this later in our Gaussian approximation.

\begin{proof}[Proof of Proposition \ref{smoothmultid}]
The partial derivative bounds in Proposition \ref{smoothmultid} follow exactly as in Lemma 4.3 of Harper and Lamzouri~\cite{HL}, for example, by repeated application of the product rule together with the fact that $\frac{d^{m}}{d x^m} g(x) = (1/\delta)^m \frac{d^{m}}{d x^m} \theta(x/\delta) \ll (1/\delta)^m \theta(x/\delta) = (1/\delta)^m g(x)$ for $1 \leq m \leq 3$.

For the lower bound on $\textbf{1}_{R(S)}$, it will suffice to show that whenever $(x_1, ..., x_n) \notin R(S)$ we have a uniform upper bound
$$ h_{S,\delta}^{-}(x_1,...,x_n) \ll e^{-1/\sqrt{\delta}} . $$
Indeed, if $(x_1, ..., x_n) \notin R(S)$ then for at least one pair $(i,j) \in S$, we have $x_i - x_j - \sqrt{\delta} < -\sqrt{\delta}$, and therefore
 $$ h_{S,\delta}^{-}(x_1,...,x_n) \leq g(-\sqrt{\delta}) = \theta(-1/\sqrt{\delta}) \ll e^{-1/\sqrt{\delta}} . $$

Similarly, for the upper bound it will suffice to show that whenever $(x_1, ..., x_n) \in R(S)$ we have
 $$ h_{S,\delta}^{+}(x_1,...,x_n) = 1 - O(n^2 e^{-1/\sqrt{\delta}}) . $$
Indeed, if $(x_1, ..., x_n) \in R(S)$ then we have $x_i - x_j + \sqrt{\delta} \geq \sqrt{\delta}$ for each pair $(i,j) \in S$, and so
 $$ h_{S,\delta}^{+}(x_1,...,x_n) \geq g(\sqrt{\delta})^{\#S} \geq \theta(1/\sqrt{\delta})^{n^2} , $$
 which implies the result since we have $\theta(1/\sqrt{\delta}) = 1 - O(e^{-1/\sqrt{\delta}})$.

\end{proof}

\subsection{A Lindeberg type argument}
In this subsection, we shall establish a ``moderate deviation'' type of Gaussian approximation result relative to smooth test functions $h$. By ``moderate deviation'', we mean that we want the theorem to remain useful even in the tails of the Gaussian, where an approximation with a simple absolute error term would not be useful. Instead, we are seeking a result with a relative error (together with an absolute error term that is extremely small).

We shall obtain our Gaussian approximation with a version of the Lindeberg replacement strategy, which was originally used to prove the classical central limit theorem for sums of independent random variables. The idea is to Taylor expand the test function $h$ to third order, and replace the independent summands $\X^{(j)}$ by corresponding Gaussian random variables $\Z^{(j)}$ one at a time. The slightly non-standard assumptions we shall make on $h$, that its partial derivatives are controlled by $h$ itself, will allow us to make the biggest error term a relative rather than absolute one. (So far as we are aware, the use of such non-standard $h$ is novel in this context. The use of a Lindeberg type method is certainly not novel, for example this is how Bentkus~\cite{bentkus} proceeds.)

Let $C_1 , C_3 \geq 1$, and let $h : \RR^n \rightarrow [0,\infty)$ be a three times continuously differentiable function, such that for all $\textbf{x} \in \RR^n$ we have
$$ h(\textbf{x}) \leq 1 , \;\;\; \sup_{1 \leq i \leq n} \left|\frac{\partial}{\partial x_i} h(\textbf{x})\right| \leq C_1 h(\textbf{x}) , \;\;\; \sup_{1 \leq i,j,k \leq n} \left|\frac{\partial^{3}}{\partial x_i \partial x_j \partial x_k} h(\textbf{x})\right| \leq C_3 h(\textbf{x}) . $$

\begin{lem}\label{lindeberglemma}
Let $0 < \epsilon \leq \min\{1/(2C_1) , 1/(2 C_{3}^{1/3}) \}$.
Then uniformly for any fixed $\Y \in \RR^n$, any real random vector $\X=(X_1 , ..., X_n)$ whose components have mean zero, we have the following. 
If $\Z=(Z_1 , ..., Z_n)$ is a multivariate normal random vector whose components have mean zero and the same covariances $r_{i,j} := \ex X_i X_j $ as $\X$, then
\begin{eqnarray}
\ex h(\Y+\X) & = & (1 + \gamma(\epsilon))\ex h(\Y+\Z) + \nonumber \\
&& + O\Biggl(C_3 \Biggl(\sum_{i=1}^{n} \ex\Big(\textbf{1}_{|X_i| > \epsilon/n} (\epsilon^{3} + n^{2} |X_i|^3)\Big)+ \sum_{i=1}^{n}(\epsilon^3 + n^2 r_{i,i}^{3/2}) e^{-\epsilon^{2}/(2n^2 r_{i,i})} \Biggr) \Biggr) , \nonumber
\end{eqnarray}
where $\gamma(\epsilon) = \gamma(\epsilon,h,\Y,\X)$ satisfies $|\gamma(\epsilon)| \leq 6C_3 \epsilon^3$.
\end{lem}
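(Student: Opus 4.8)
### Proof Plan for Lemma \ref{lindeberglemma}

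\textbf{Overall strategy.} The plan is to run a Lindeberg-style replacement argument, swapping the summands $\X^{(j)}$ of $\X$ for independent Gaussian summands $\Z^{(j)}$ one index at a time, while carefully tracking the fact that the test function $h$ and all its derivatives are comparable in size. Concretely, I would write $\X = \sum_{j} \X^{(j)}$ and $\Z = \sum_{j} \Z^{(j)}$ where the $\X^{(j)}$ are independent, the $\Z^{(j)}$ are independent Gaussians with matching second moments, and everything is independent across the two families; for each $j$ set $\W_j = \Y + \Z^{(1)} + \dots + \Z^{(j-1)} + \X^{(j+1)} + \dots + \X^{(n)}$, so that passing from $\ex h(\W_j + \X^{(j)})$ to $\ex h(\W_j + \Z^{(j)})$ is one replacement step, and $\ex h(\Y+\X) - \ex h(\Y+\Z)$ telescopes over these steps. (Since the components of $\X$ have mean zero, in the prime-number-race application each $\X^{(j)}$ is one of the independent contributions from a single $\gamma_\chi$; but the statement is purely probabilistic, so I would just posit an independent-summand decomposition — or, at the coarsest, take $n=1$ summand — and note the error bound only involves the $X_i$ and $r_{i,i}$ directly.)

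\textbf{Single replacement step.} Fix $j$ and abbreviate $\W = \W_j$, $\X^{(j)} = \mathbf{U}$, $\Z^{(j)} = \mathbf{V}$, which are independent of each other and of $\W$, with $\ex U_i = \ex V_i = 0$ and $\ex U_i U_{i'} = \ex V_i V_{i'}$ for all $i,i'$. Taylor-expand $h(\W + \mathbf{t})$ in $\mathbf{t}$ to third order about $\mathbf{t}=\mathbf{0}$: the zeroth and first order terms in $\mathbf{U}$ versus $\mathbf{V}$ agree after taking expectations (mean zero), and the second order terms agree because the covariances match. What remains is the third-order Taylor remainder, which by hypothesis on $\partial^3 h$ is bounded by $C_3 h(\W + \mathbf{t}^*) \sum_{i,i',i''}|U_i U_{i'} U_{i''}|$ for some intermediate point; the analogous bound holds for $\mathbf{V}$. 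The key trick is to control $h(\W + \mathbf{t}^*)$ by $h(\W)$ up to a multiplicative factor: since $|\partial_i \log h| \le C_1$, we have $h(\W + \mathbf{t}^*) \le h(\W) e^{C_1 \|\mathbf{t}^*\|_1} \le h(\W) e^{C_1 \|\mathbf{U}\|_1}$ (and similarly with $\mathbf{V}$). To keep this factor bounded I would split on whether all coordinates $|U_i| \le \epsilon/n$ (so $\|\mathbf{U}\|_1 \le \epsilon \le 1/(2C_1)$, the factor is $\le \sqrt{e}$), or some $|U_i| > \epsilon/n$ (the contribution lives inside the indicator event $\textbf{1}_{|X_i|>\epsilon/n}$ and is absorbed into the stated absolute error term, using $h \le 1$ and crude bounds on the cubic moments). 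On the ``good'' event, the third-order remainder is $\le \sqrt{e}\, C_3 h(\W) \|\mathbf{U}\|_1^3$, and I would then bound $\ex(\textbf{1}_{\text{good}} \|\mathbf{U}\|_1^3 \mid \W)$; because the $\X^{(j)}$ decomposition only affects coordinates through the single summand, this is really a sum over $i$ of terms like $\epsilon^3$ plus $n^2 \ex|X_i|^3$-type quantities, matching the first sum in the error. For the Gaussian side, the tail $\|\mathbf{V}\|_1 > \epsilon$ has exponentially small probability $e^{-\epsilon^2/(2n^2 r_{i,i})}$ by a union bound and a Gaussian tail estimate, again using $h \le 1$ to control that regime, which produces the second sum in the error.

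\textbf{Assembling the relative error, and the main obstacle.} Summing the single-step bounds over $j$ rebuilds $\sum_i \ex(\textbf{1}_{|X_i|>\epsilon/n}(\epsilon^3 + n^2|X_i|^3))$ and $\sum_i (\epsilon^3 + n^2 r_{i,i}^{3/2}) e^{-\epsilon^2/(2n^2 r_{i,i})}$ for the absolute error; the remaining ``good-event'' third-order terms are each bounded by a constant times $C_3 \epsilon^3 \, \ex h(\W_j)$, and the real work is to convert $\sum_j \ex h(\W_j)$ back into $\ex h(\Y+\Z)$ times $1+O(C_3\epsilon^3)$. I expect this last bookkeeping to be the main obstacle: one must show each hybrid expectation $\ex h(\W_j)$ is within a factor $1 + O(C_1\epsilon + C_3^{1/3}\epsilon)$ of $\ex h(\Y+\Z)$, which itself follows by running the same replacement bound but now only needing the crude multiplicative control (no cancellation), i.e. an a priori Lipschitz-in-log comparison $\ex h(\W_j) = (1+O(C_3\epsilon^3))\ex h(\Y+\Z) + (\text{absolute errors})$ — there is a mild circularity here that I would resolve by first proving the weaker statement ``$\ex h(\Y+\X) \asymp \ex h(\Y+\Z)$ up to the absolute errors'' using only the first-derivative hypothesis and the event split, then feeding that back in to upgrade the multiplicative constant on the third-order terms to exactly $1 + \gamma(\epsilon)$ with $|\gamma(\epsilon)| \le 6C_3\epsilon^3$. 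The constraint $\epsilon \le \min\{1/(2C_1), 1/(2C_3^{1/3})\}$ is exactly what makes all the exponential factors $e^{C_1\|\cdot\|_1}$ and the geometric series in the telescoping stay bounded by absolute constants, so the final $\gamma(\epsilon)$ is genuinely $O(C_3\epsilon^3)$; pinning down the constant $6$ is then a matter of being slightly careful in the two places ($\mathbf{U}$ side and $\mathbf{V}$ side) where a $C_3\epsilon^3$ term is generated per step and noting the telescoped sum of per-step relative errors composes multiplicatively into $1+\gamma(\epsilon)$.
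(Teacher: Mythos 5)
Your core mechanics are the right ones and largely coincide with the paper's: third-order Taylor expansion, cancellation of the zeroth/first/second order terms in expectation via the matching means and covariances, the split at $|X_i|>\epsilon/n$ with H\"older/union-bound bookkeeping to reach the stated absolute error, and truncated Gaussian third moments for the $Z_i$ side. Two comments on where you diverge. First, a caution: the lemma assumes no independent-summand structure on $\X$ (it is an arbitrary mean-zero vector), so the telescoping over hybrids $\W_j$ you set up is not available here — that multi-step replacement is exactly Proposition \ref{moddevprop}, which the paper obtains afterwards by induction on this one-step lemma. Your fallback of taking a single summand is the correct reading, and then your argument collapses to a direct comparison of $\ex h(\Y+\X)$ with $\ex h(\Y+\Z)$. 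Second, the genuine difference is where the relative error is anchored. You control $h$ at the intermediate Taylor point by $h$ at the base point (via $h(\W+\mathbf{t})\le h(\W)e^{C_1\|\mathbf{t}\|_1}$), which leaves you needing an a priori comparison of $h(\Y)$ (or $\ex h(\W_j)$) with $\ex h(\Y+\Z)$ — the "circularity" you then propose to break with a crude first-derivative comparability step. That can be made to work, but it is an extra layer and only gives $|\gamma(\epsilon)|\ll C_3\epsilon^3$ with an unspecified constant unless you are quite careful. The paper avoids this entirely: on the event $\sum_i|X_i|\le\epsilon\le 1/(2C_1)$ it bounds $\sup_{\theta\in[0,1]}h(\Y+\theta\X)\le 2h(\Y+\X)$ (endpoint, not base point), so the cubic remainder is at most $2\epsilon^3 h(\Y+\X)$ plus tail terms; taking expectations gives $|\ex h(\Y+\X)-\ex h(\Y+\Z)|\le 2C_3\epsilon^3\big(\ex h(\Y+\X)+\ex h(\Y+\Z)\big)+(\text{absolute errors})$, and a purely algebraic rearrangement using $2C_3\epsilon^3\le 1/4$, namely $(1+2C_3\epsilon^3)/(1-2C_3\epsilon^3)\le 1+6C_3\epsilon^3$, produces the factor $(1+\gamma(\epsilon))$ with the stated constant $6$ in one stroke. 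So your plan is essentially sound once reduced to one step, but the paper's symmetric endpoint bound plus rearrangement is both simpler and what pins down the constant.
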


\begin{proof}[Proof of Lemma \ref{lindeberglemma}]
By the multivariate form of Taylor's theorem, we have
$$ h(\Y+\X) = h(\Y) + \sum_{i=1}^{n} X_i \frac{\partial}{\partial x_i} h(\Y) + \sideset{}{^*}{\sum}_{i,j = 1}^{n} X_i X_j \frac{\partial^{2}}{\partial x_i \partial x_j} h(\Y) + R(h,\Y,\X) , $$
where the $*$ on the sum indicates that the terms $i=j$ should be counted with weight $1/2$, and where the error term satisfies
\begin{eqnarray}
|R(h,\Y,\X)| & \leq & \sup_{\theta \in [0,1]} \sup_{1 \leq i,j,k \leq n} \left|\frac{\partial^{3}}{\partial x_i \partial x_j \partial x_k} h(\Y + \theta \X)\right| \Big(\sum_{i=1}^{n} |X_i|\Big)^3 \nonumber \\
& \leq & C_3 \sup_{\theta \in [0,1]} h(\Y + \theta \X) \Big(\sum_{i=1}^{n} |X_i|\Big)^3 . \nonumber
\end{eqnarray}
One has the same expansion for $h(\Y+\Z)$. Taking expectations and then taking the difference $\ex h(\Y+\X) - \ex h(\Y+\Z)$, all of the main terms cancel because we assume $\Z$ and $\X$ have the same means and covariances, so we get
\begin{eqnarray}
&& |\ex h(\Y+\X) - \ex h(\Y+\Z)| \leq \ex|R(h,\Y,\X)| + \ex|R(h,\Y,\Z)| \nonumber \\
& \leq & C_3 \Biggl(\ex \left\{\sup_{\theta \in [0,1]} h(\Y + \theta \X) \Big(\sum_{i=1}^{n} |X_i|\Big)^3\right\} + \ex \left\{\sup_{\theta \in [0,1]} h(\Y + \theta \Z) \Big(\sum_{i=1}^{n} |Z_i|\Big)^3\right\}\Biggr) . \nonumber
\end{eqnarray}

Now using Taylor's theorem again, for any $\theta \in [0,1]$ we have
\begin{eqnarray}
|h(\Y + \theta \X) - h(\Y+\X)| & \leq & \sup_{\phi \in [0,1]} \sup_{1 \leq i \leq n} \left|\frac{\partial}{\partial x_i} h(\Y + \phi \X)\right| \sum_{i=1}^{n} |X_i| \nonumber \\
& \leq & C_1 \sup_{\phi \in [0,1]} h(\Y + \phi \X) \sum_{i=1}^{n} |X_i| . \nonumber
\end{eqnarray}
In particular, if we happen to have $\sum_{i=1}^{n} |X_i| \leq \epsilon \leq 1/(2C_1)$ then it follows that $$\sup_{\theta \in [0,1]} h(\Y + \theta \X) \leq 2 h(\Y+\X).$$ So we always have the bound
$$ \ex \sup_{\theta \in [0,1]} h(\Y + \theta \X) \Big(\sum_{i=1}^{n} |X_i|\Big)^3 \leq 2\epsilon^3 \ex h(\Y+\X) + \ex \textbf{1}_{\sum_{i=1}^{n} |X_i| > \epsilon} \Big(\sum_{i=1}^{n} |X_i|\Big)^3 . $$
The same argument applies to $\sup_{\theta \in [0,1]} h(\Y + \theta \Z)$.

Putting everything together, we get
\begin{multline*}
\big|\ex h(\Y+\X) - \ex h(\Y+\Z)\big|  \leq  2C_3 \epsilon^{3} \big(\ex h(\Y+\X) + \ex h(\Y+\Z)\big)+  \\
+ C_3 \left(\ex \Bigg(\textbf{1}_{\sum_{i=1}^{n} |X_i| > \epsilon} \Big(\sum_{i=1}^{n} |X_i|\Big)^3\Bigg) + \ex \Bigg(\textbf{1}_{\sum_{i=1}^{n} |Z_i| > \epsilon} \Big(\sum_{i=1}^{n} |Z_i|\Big)^3\Bigg)\right) .
\end{multline*}
Furthermore, our assumptions on $\epsilon$ imply that $2C_3 \epsilon^3 \leq 1/4$, and so
$$ \frac{1+2C_3 \epsilon^3}{1-2C_3 \epsilon^3} = 1 + \frac{4C_3 \epsilon^3}{1-2C_3 \epsilon^3} \leq 1 + 6C_3 \epsilon^3 , $$
and similarly $\frac{1-2C_3 \epsilon^3}{1+2C_3 \epsilon^3} \geq 1 - 4C_3 \epsilon^3$. So rearranging our above bound, we find that
\begin{multline*}
\ex h(\Y+\X) = (1 + \gamma(\epsilon))\ex h(\Y+\Z) + \\
 O\Biggl(C_3 \Biggl(\ex\Big(\textbf{1}_{\sum_{i=1}^{n} |X_i| > \epsilon} \big(\sum_{i=1}^{n} |X_i|\big)^3 \Big)+ \ex \Big(\textbf{1}_{\sum_{i=1}^{n} |Z_i| > \epsilon} (\sum_{i=1}^{n} |Z_i|)^3\Big) \Biggr) \Biggr),
 \end{multline*}
where $|\gamma(\epsilon)| \leq 6C_3 \epsilon^3$.

Next we want to work with the ``big Oh'' term a little, to replace terms depending on $\sum_{i=1}^{n} |X_i|$ and $\sum_{i=1}^{n} |Z_i|$ by terms that only require information about individual components $X_i$ and $Z_i$, as in the statement
 of the lemma.   First, H\"older's inequality implies that $(\sum_{i=1}^{n} |X_i|)^3 \le n^2 \sum_{i=1}^{n} |X_i|^3$. Furthermore, by splitting into cases according as each $|X_i| > \epsilon/n$ or not, we get
\begin{eqnarray}
n^2 \ex \Big(\textbf{1}_{\sum_{i=1}^{n} |X_i| > \epsilon} \sum_{i=1}^{n} |X_i|^3\Big) & \leq & n^2 \ex \left(\textbf{1}_{\sum_{i=1}^{n} |X_i| > \epsilon} \sum_{i=1}^{n} \left((\epsilon/n)^3 + \textbf{1}_{|X_i| > \epsilon/n} |X_i|^3\right)\right) \nonumber \\
& \leq & n^{2} \sum_{i=1}^{n} \ex \Big(\textbf{1}_{|X_i| > \epsilon/n} |X_i|^3\Big) + \epsilon^3 \ex \big( \textbf{1}_{\sum_{i=1}^{n} |X_i| > \epsilon}\big)  . \nonumber
\end{eqnarray}
And we can bound this further using the inequality 
$$\ex \textbf{1}_{\sum_{i=1}^{n} |X_i| > \epsilon} \leq \ex \textbf{1}_{\max_{i} |X_i| > \epsilon/n} \leq \sum_{i=1}^{n} \ex \textbf{1}_{|X_i| > \epsilon/n},$$
 so overall we have
$$ \ex \textbf{1}_{\sum_{i=1}^{n} |X_i| > \epsilon} (\sum_{i=1}^{n} |X_i|)^3 \le n^2 \ex \textbf{1}_{\sum_{i=1}^{n} |X_i| > \epsilon} \sum_{i=1}^{n} |X_i|^3 \leq \sum_{i=1}^{n} \ex \textbf{1}_{|X_i| > \epsilon/n} (\epsilon^{3} + n^{2} |X_i|^3) . $$
We have the analogous bound for the term involving the $Z_i$.

Finally, in the case of the $Z_i$, since we know that $Z_i \sim N(0, r_{i,i})$ we can further say that $\ex (\textbf{1}_{|Z_i| > \epsilon/n}) = \p(|Z_i| > \epsilon/n) \ll e^{-\epsilon^{2}/(2n^2 r_{i,i})}$, and that
$$ \ex \left(\textbf{1}_{|Z_i| > \epsilon/n} |Z_i|^3\right) \ll r_{i,i}^{3/2}\left(1 + (\epsilon/n\sqrt{r_{i,i}})^3\right) e^{-\epsilon^{2}/(2n^2 r_{i,i})} \ll \left(r_{i,i}^{3/2} + \frac{\epsilon^3}{n^3}\right) e^{-\epsilon^{2}/(2n^2 r_{i,i})} . $$
Inserting these estimates in the ``big Oh'' term completes the proof.
\end{proof}

Applying Lemma \ref{lindeberglemma} inductively, we shall prove our Gaussian approximation result for sums of $m$ independent random vectors.

\begin{pro}\label{moddevprop}
For $1 \leq j \leq m$, let $\X^{(j)} = (X_{1}^{(j)},...,X_{n}^{(j)})$ be independent $\RR^n$-valued random vectors whose components have mean zero, and let $\Z^{(j)}$, $1\le j\le m$, be independent multivariate normal random vectors whose components have mean zero and the same covariances $r(j)_{i,k} = \ex X^{(j)}_{i} X^{(j)}_{k}$ as $\X^{(j)}$.

Let $0 < \epsilon \leq \min\{1/(2C_1) , 1/(3 C_{3}^{1/3} m^{1/3}) \}$ be a small parameter.

Then we have
\begin{eqnarray}
\ex h\big(\sum_{j=1}^{m} \X^{(j)}\big) & = & e^{\Delta(\epsilon)} \ex h\big(\sum_{j=1}^{m} \Z^{(j)}\big) + O\Biggl( C_3 \sum_{j=1}^{m} \sum_{i=1}^{n} \ex \Big(\textbf{1}_{|X^{(j)}_i| > \epsilon/n} \big(\epsilon^{3} + n^{2} |X^{(j)}_i|^3\big)\Big) \Biggr) \nonumber \\
&& + O\Biggl(C_3 \sum_{j=1}^{m} \sum_{i=1}^{n} (\epsilon^3 + n^2 r(j)_{i,i}^{3/2}) e^{-\epsilon^{2}/(2n^2 r(j)_{i,i})} \Biggr) , \nonumber
\end{eqnarray}
where $\Delta(\epsilon) = \Delta_m(\epsilon,h,\{\X^{(j)}\})$ satisfies $|\Delta(\epsilon)| \leq 12mC_3 \epsilon^3$.
\end{pro}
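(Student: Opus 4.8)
The plan is to run a hybrid (telescoping) argument: replace the summands $\X^{(j)}$ by the $\Z^{(j)}$ one at a time, invoking Lemma~\ref{lindeberglemma} at each step, and then carefully compound the multiplicative errors. We may assume, enlarging the probability space if necessary, that all $2m$ vectors $\X^{(1)},\dots,\X^{(m)},\Z^{(1)},\dots,\Z^{(m)}$ are mutually independent; this is harmless since the conclusion only involves $\ex h(\sum_j\X^{(j)})$ and $\ex h(\sum_j\Z^{(j)})$ separately. For $0\le \ell\le m$ set $\W^{(\ell)} := \sum_{j=1}^{\ell}\Z^{(j)} + \sum_{j=\ell+1}^{m}\X^{(j)}$, so $\W^{(0)} = \sum_j\X^{(j)}$ and $\W^{(m)} = \sum_j\Z^{(j)}$, and put $F_\ell := \ex h(\W^{(\ell)})$, noting $0\le F_\ell\le 1$ since $0\le h\le 1$. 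I would first check that the hypothesis of Lemma~\ref{lindeberglemma}, namely $\epsilon\le\min\{1/(2C_1),1/(2C_3^{1/3})\}$, follows from the Proposition's hypothesis because $m\ge 1$.

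Next, at step $\ell$ I would write $\W^{(\ell-1)} = \Y_\ell + \X^{(\ell)}$ and $\W^{(\ell)} = \Y_\ell + \Z^{(\ell)}$ with $\Y_\ell := \sum_{j<\ell}\Z^{(j)} + \sum_{j>\ell}\X^{(j)}$; by mutual independence $\Y_\ell$ is independent of both $\X^{(\ell)}$ and $\Z^{(\ell)}$. Conditioning on $\Y_\ell$ and applying Lemma~\ref{lindeberglemma} with the fixed vector there being the realisation of $\Y_\ell$ gives
\[
 \ex\big[h(\Y_\ell+\X^{(\ell)})\,\big|\,\Y_\ell\big] = \big(1+\gamma(\epsilon,h,\Y_\ell,\X^{(\ell)})\big)\,\ex\big[h(\Y_\ell+\Z^{(\ell)})\,\big|\,\Y_\ell\big] + O(\mc E_\ell),
\]
where $|\gamma|\le 6C_3\epsilon^3$ and $\mc E_\ell := C_3\sum_{i=1}^n\ex\big(\textbf{1}_{|X^{(\ell)}_i|>\epsilon/n}(\epsilon^3+n^2|X^{(\ell)}_i|^3)\big) + C_3\sum_{i=1}^n(\epsilon^3+n^2 r(\ell)_{i,i}^{3/2})e^{-\epsilon^2/(2n^2 r(\ell)_{i,i})}$ depends only on the law of $\X^{(\ell)}$, not on $\Y_\ell$. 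Taking expectations over $\Y_\ell$, and using $0\le\ex[h(\Y_\ell+\Z^{(\ell)})\,|\,\Y_\ell]\le 1$ to absorb the random but uniformly bounded factor $1+\gamma$ into a single deterministic factor $1+\gamma_\ell$ with $|\gamma_\ell|\le 6C_3\epsilon^3$ (taking $\gamma_\ell=0$ in the degenerate case $F_\ell=0$), I obtain the recursion $F_{\ell-1} = (1+\gamma_\ell)F_\ell + E_\ell$ with $|E_\ell|\ll\mc E_\ell$, for $1\le\ell\le m$.

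Unrolling the recursion yields
\[
 F_0 = \Big(\prod_{\ell=1}^{m}(1+\gamma_\ell)\Big)F_m + \sum_{\ell=1}^{m}\Big(\prod_{j=1}^{\ell-1}(1+\gamma_j)\Big)E_\ell .
\]
Here the hypothesis $\epsilon\le 1/(3C_3^{1/3}m^{1/3})$ forces $6C_3\epsilon^3\le \tfrac{2}{9m}$, so every partial product $\prod_{j=1}^{\ell-1}(1+\gamma_j)$ lies in $[(1-\tfrac{2}{9m})^m,(1+\tfrac{2}{9m})^m]\subseteq[\tfrac12,2]$ and is in particular $\Theta(1)$; hence the error sum is $O\big(\sum_{\ell=1}^m\mc E_\ell\big)$, which is exactly the error term claimed. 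Writing $\prod_{\ell=1}^m(1+\gamma_\ell) = e^{\Delta(\epsilon)}$ with $\Delta(\epsilon):=\sum_{\ell=1}^m\log(1+\gamma_\ell)$ and using $|\log(1+x)|\le 2|x|$ for $|x|\le\tfrac12$ gives $|\Delta(\epsilon)|\le 2\sum_{\ell=1}^m|\gamma_\ell|\le 12mC_3\epsilon^3$, as required.

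The argument is essentially routine once set up; the two points needing care are that the ``constant'' $\gamma$ from Lemma~\ref{lindeberglemma} genuinely depends on the conditioning vector $\Y_\ell$, so it must be averaged out (harmless since uniformly bounded), and the bookkeeping of the compounding multiplicative errors. The hard part, if any, is seeing that it is precisely the strengthened constraint $\epsilon\le 1/(3C_3^{1/3}m^{1/3})$ (rather than the weaker one needed for a single application of the lemma) that keeps $\sum_\ell|\gamma_\ell| = O(mC_3\epsilon^3)$ small enough for the telescoped product $\prod_\ell(1+\gamma_\ell)$ to stay bounded and to be rewritten cleanly as $e^{\Delta(\epsilon)}$ with $|\Delta(\epsilon)|\le 12mC_3\epsilon^3$.
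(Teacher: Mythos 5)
Your proof is correct and follows essentially the same route as the paper: a one-at-a-time Lindeberg replacement via Lemma~\ref{lindeberglemma}, with the compounded $(1+\gamma_\ell)$ factors collected into $e^{\Delta(\epsilon)}$ and the constraint $\epsilon \leq 1/(3C_3^{1/3}m^{1/3})$ keeping the partial products bounded so the error terms only pick up an $O(1)$ factor. The paper phrases this as an induction on $m$ (replacing the last vector and shifting $h$ by $\Z^{(m)}$) rather than an unrolled telescoping recursion, but this is only a cosmetic difference, and your explicit averaging of the $\Y_\ell$-dependent factor $\gamma$ is a welcome extra care.
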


To get an idea of the potential usefulness of this result, the reader might consider the case where all components $X^{(j)}_{i}$ have variance $\asymp 1/m$ (and so all components of the sum $\sum_{j=1}^{m} \X^{(j)}$ have variance $\asymp 1$), and $C_1, C_3, n$ are fairly small compared with $m$. Because $\Delta(\epsilon)$ decays cubically with $\epsilon$, any choice of $\epsilon$ that is rather smaller than $1/(mC_3)^{1/3}$ will make the relative error term $e^{\Delta(\epsilon)}$ close to 1. Meanwhile, if the random components $X^{(j)}_{i}$ are reasonably concentrated on the order of their standard deviations, we expect any choice of $\epsilon$ rather larger than $n/\sqrt{m}$ to yield substantial savings in the ``big Oh'' terms. So we have room to make a choice of $\epsilon$ that simultaneously controls all these terms.

\begin{proof}[Proof of Proposition \ref{moddevprop}]
We proceed by induction on $m$. Our inductive hypothesis will be the estimate stated in the proposition, with an additional multiplier $e^{6C_3 \epsilon^{3} (m-1)}$ in the ``big Oh'' terms. If we can prove this inductively we will be done, since our conditions on $m$ imply this factor is $\leq e^{6/27} \ll 1$.

When $m=1$, Proposition \ref{moddevprop} is a direct consequence of Lemma \ref{lindeberglemma}, with $\Y$ taken as the 0 vector.

For the inductive step, if we first take $\Y = \sum_{j=1}^{m-1} \X^{(j)}$ and condition on the value of $\Y$ (which is independent of $\X^{(m)}$ and $\Z^{(m)}$), then Lemma \ref{lindeberglemma} implies that
\begin{eqnarray}
\ex h(\Y+\X^{(m)}) & = & (1 + \gamma(\epsilon))\ex h(\Y+\Z^{(m)}) + O\Biggl(C_3 \sum_{i=1}^{n} \ex \Big(\textbf{1}_{|X^{(m)}_i| > \epsilon/n} (\epsilon^{3} + n^{2} |X^{(m)}_i|^3)\Big) \Biggr) \nonumber \\
&& + O\Biggl(C_3 \sum_{i=1}^{n}(\epsilon^3 + n^2 r(m)_{i,i}^{3/2}) e^{-\epsilon^{2}/(2n^2 r(m)_{i,i})} \Biggr) . \nonumber
\end{eqnarray}
Now if we condition on the value of $\Z^{(m)}$, and apply the inductive hypothesis with $h(\cdot)$ replaced by $h(\cdot + \Z^{(m)})$ (which obeys all the same partial derivative bounds), we get that $\ex h(\sum_{j=1}^{m-1} \X^{(j)}+\Z^{(m)})$ is
\begin{eqnarray}
&&  e^{\Delta_{m-1}(\epsilon)} \ex h(\sum_{j=1}^{m-1} \Z^{(j)}+\Z^{(m)}) + O\Biggl(e^{6C_3 \epsilon^{3} (m-1)} C_3 \sum_{j=1}^{m-1} \sum_{i=1}^{n} \ex \textbf{1}_{|X^{(j)}_i| > \epsilon/n} (\epsilon^{3} + n^{2} |X^{(j)}_i|^3) \Biggr) \nonumber \\
&& + O\Biggl(e^{6C_3 \epsilon^{3} (m-1)} C_3 \sum_{j=1}^{m-1} \sum_{i=1}^{n}(\epsilon^3 + n^2 r(j)_{i,i}^{3/2}) e^{-\epsilon^{2}/(2n^2 r(j)_{i,i})} \Biggr) . \nonumber
\end{eqnarray}
The above is then multiplied by $(1 + \gamma(\epsilon))$ in the expression for
$\ex h(\Y+\X^{(m)})$.
Using the fact that $$e^{-12C_3 \epsilon^3} \leq e^{-2|\gamma(\epsilon)|} \leq (1+\gamma(\epsilon)) \leq e^{|\gamma(\epsilon)|} \leq e^{6C_3 \epsilon^{3}},$$ we complete the induction.
\end{proof}

\subsection{Application to prime number races}
In this subsection, we specialise the discussion in the preceding propositions to the case of prime number races. Recall the random variables
$$ X(q,a) = -C_q(a)+ \sum_{\substack{\chi\neq \chi_0\\ \chi\pmod q}}\re \left(2\chi(a)\sum_{\gamma_{\chi}>0} \frac{U(\gamma_{\chi})}{\sqrt{\frac14+\gamma_{\chi}^2}}\right) $$
from section \ref{prelimsec}, and recall the sets $R(S) = \{(x_1 , ..., x_n) \in \RR^n : x_i \geq x_j \; \forall (i,j) \in S\} \subseteq \RR^n$ and the corresponding smooth test functions $h_{S,\delta}^{-}, h_{S,\delta}^{+}$ from section \ref{smoothtestsubsec}.

\begin{pro}\label{normapproxpnr}
Let $q$ be large, and suppose that $2 \leq n \leq \varphi(q)^{1/12}$.

Let $\X := \left(\frac{X(q,a_i)}{\sqrt{\var(q)}}\right)_{1 \leq i \leq n}$, where $a_1, ..., a_n$ are coprime residue classes mod $q$, and let $\Z=(Z_{i})_{1 \leq i \leq n}$ denote a multivariate normal random vector whose components have mean zero, variance one, and correlations $\ex Z_{i}Z_{j} := \frac{B_{q}(a_i,a_j)}{\var(q)}$.

Then for any small parameter $(\frac{n^{5} \log q}{\sqrt{\varphi(q)}})^{1/3} \leq \delta \leq \frac{1}{\log^{2}n}$ and any admissible set $S$, we have
\begin{multline*}
 \(1 + O\pfrac{1}{\log q}\)\ex h^{-}_{S,\delta}(\Z) - O\(e^{-2/\sqrt{\delta}} + n e^{- \Theta\big(\frac{\delta^{2} \varphi(q)^{1/3}}{n^4 \log^{2/3}q}\big)}\)  \\
\leq  \p(\X \in R(S)) \leq \(1 + O\pfrac{1}{\log q}\)\ex h^{+}_{S,\delta}(\Z) + O\(n^2 e^{-2/\sqrt{\delta}} + n e^{- \Theta\big(\frac{\delta^{2} \varphi(q)^{1/3}}{n^4 \log^{2/3}q}\big)}\) . \nonumber
\end{multline*}
\end{pro}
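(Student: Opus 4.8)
The plan is to apply the abstract Gaussian approximation of Proposition \ref{moddevprop}, together with the test-function estimates of Proposition \ref{smoothmultid}, to the specific random vector $\X = (X(q,a_i)/\sqrt{\var(q)})_i$. First I would write $X(q,a)$ (ignoring for the moment the deterministic shift $-C_q(a)$, which I will put back at the end) as a sum over primitive characters and zeros, and group the terms into $m := \lfloor \varphi(q) \rfloor$ or so independent summands $\X^{(j)}$: the natural grouping is by the zero $\gamma_\chi$, i.e.\ for each $\gamma_\chi > 0$ in $\Gamma$ the vector whose $i$-th coordinate is $\frac{1}{\sqrt{\var(q)}}\re\bigl(2\chi(a_i) U(\gamma_\chi)/\sqrt{\tfrac14+\gamma_\chi^2}\bigr)$. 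These are independent because the $U(\gamma_\chi)$ are independent, each has mean zero, and summing them reconstitutes $\X$ (up to the shift). The covariances $r(j)_{i,k}$ of the $j$-th summand sum, over $j$, to exactly $\var(q)^{-1}$ times $\textup{Cov}_{q;a_1,\dots,a_n}(i,k)$, so the total covariance structure of $\sum_j \Z^{(j)}$ matches that prescribed for $\Z$ in the statement. The relevant quantitative inputs are: each coordinate of $\X^{(j)}$ is bounded by $O(1/(\sqrt{\var(q)}(\tfrac14+\gamma_\chi^2)^{1/2}))$, which is $\ll 1/\sqrt{\varphi(q)\log q}$ for $\gamma_\chi$ bounded and decays for large $\gamma_\chi$; and $\sum_j r(j)_{i,i} \asymp 1$. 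One also needs the count $\#\{\gamma_\chi \in \Gamma : \gamma_\chi > 0\} \asymp \varphi(q)\log q$ and the standard zero-density estimates to control the contribution of large zeros — but since we only need polynomial-in-$q$ type savings, crude bounds suffice here.

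Next I would take $h = h^{\pm}_{S,\delta}$, which by Proposition \ref{smoothmultid} satisfies the hypotheses of Proposition \ref{moddevprop} with $C_1 \ll n/\delta$ and $C_3 \ll (n/\delta)^3$. With $m \asymp \varphi(q)\log q$, the constraint $\epsilon \leq \min\{1/(2C_1), 1/(3C_3^{1/3}m^{1/3})\}$ becomes roughly $\epsilon \ll \delta/(n (\varphi(q)\log q)^{1/3})$; I would choose $\epsilon$ of exactly this order. Then the relative error $|\Delta(\epsilon)| \leq 12 m C_3 \epsilon^3 \ll m (n/\delta)^3 \cdot \delta^3/(n^3 m) = O(1)$ — more precisely, choosing $\epsilon$ a small constant times the permitted maximum makes $\Delta(\epsilon) = O(1/\log q)$, which is where the $(1+O(1/\log q))$ factor in the conclusion comes from; here I will need the constraint $\delta \geq (n^5\log q/\sqrt{\varphi(q)})^{1/3}$ to guarantee the bound is actually $O(1/\log q)$ rather than merely $O(1)$ (the extra powers of $n$ and the $\sqrt{\varphi(q)}$ denominator are exactly calibrated to this). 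For the additive ("big Oh") terms: the Gaussian tail sum $\sum_j\sum_i (\epsilon^3 + n^2 r(j)_{i,i}^{3/2}) e^{-\epsilon^2/(2n^2 r(j)_{i,i})}$ I would bound by noting $r(j)_{i,i}$ is typically $\asymp 1/(\varphi(q)\log q)$ so the exponent is $\asymp -\epsilon^2 \varphi(q)\log q/n^2 \asymp -\delta^2\varphi(q)^{1/3}/(n^4\log^{2/3}q)$, matching the stated error; and the term $\sum_j \sum_i \ex(\textbf{1}_{|X_i^{(j)}|>\epsilon/n}(\epsilon^3 + n^2|X_i^{(j)}|^3))$ vanishes or is tiny because for most $j$ (those with $\gamma_\chi$ not too large) the coordinates $|X_i^{(j)}|$ are bounded by something like $1/\sqrt{\varphi(q)\log q}$, which is $\leq \epsilon/n$ for our choice of $\epsilon$; the large-$\gamma_\chi$ contribution is handled by the rapid decay $1/(\tfrac14+\gamma_\chi^2)$ together with a zero-counting bound, producing an error absorbed into the stated exponential terms.

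Finally I would combine this with Proposition \ref{smoothmultid}: writing $\textbf{1}_{R(S)}$ between $h^-_{S,\delta} - O(e^{-1/\sqrt\delta})$ and $h^+_{S,\delta} + O(n^2 e^{-1/\sqrt\delta})$ (valid since $\delta \leq 1/\log^2 n$ forces $e^{-1/\sqrt\delta} \leq 1/n^2$), taking expectations against the law of $\X$, applying the moderate-deviation estimate to the $h^\pm$ expectations, and noting $\ex h^\pm_{S,\delta}(\Z) \leq 1$ so the relative error on the main term contributes only $O(1/\log q)$ and the $e^{-1/\sqrt\delta}$ terms can be absorbed (with a harmless change of constant, giving $e^{-2/\sqrt\delta}$ as stated). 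The deterministic shift $-C_q(a)$ is reinstated by observing $|C_q(a)| \ll q^{o(1)} = o(\sqrt{\var(q)}\,\delta)$, so shifting $\X$ by $O(C_q(a)/\sqrt{\var(q)})$ only changes the arguments of the $g$-factors in $h^\pm_{S,\delta}$ by $o(\sqrt\delta)$, which can be absorbed by slightly adjusting $\delta$ (or directly into the $O(1/\log q)$ and exponential error terms). The main obstacle I anticipate is the bookkeeping in the second paragraph: verifying that the single choice of $\epsilon$ simultaneously makes $\Delta(\epsilon) = O(1/\log q)$ \emph{and} drives both additive error sums below the claimed thresholds — this is precisely why the hypothesis pins $\delta$ between $(n^5\log q/\sqrt{\varphi(q)})^{1/3}$ and $1/\log^2 n$, and checking these inequalities are consistent (which needs $n \leq \varphi(q)^{1/12}$, leaving a bit of room) is the delicate point; the character-sum and zero-counting estimates, by contrast, are standard and only needed crudely.
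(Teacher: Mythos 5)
Your overall strategy (sandwiching $\textbf{1}_{R(S)}$ between $h^{\pm}_{S,\delta}$ via Proposition \ref{smoothmultid}, absorbing the shifts $C_q(a_i)/\sqrt{\var(q)}$ by widening the smoothing parameter, and then invoking Proposition \ref{moddevprop} with $C_1\asymp n/\delta$, $C_3\asymp (n/\delta)^3$ and $\epsilon\asymp \delta/(n\varphi(q)^{1/3}\log^{1/3}q)$) is the paper's strategy, and the final numerology you describe is essentially correct. The gap is in your choice of decomposition into independent summands. You group by individual zeros $\gamma_\chi$, asserting that there are $m\asymp\varphi(q)\log q$ of them; but each $L(s,\chi)$ has infinitely many nontrivial zeros, so your decomposition has infinitely many summands and Proposition \ref{moddevprop} (whose relative error is $\leq 12mC_3\epsilon^3$ with a single $\epsilon$ constrained by $m$) does not apply as stated. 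To use it you must truncate the zero sums at some height $T$, and then two things must be controlled that you do not address: (i) the discarded tail must be absorbed at accuracy compatible with the statement, i.e.\ its contribution must be pushed below the absolute error $e^{-2/\sqrt{\delta}}$ (which in the application, with $\delta=1/(n\log q)^5$, is $e^{-2(n\log q)^{5/2}}$) --- Chebyshev/zero-counting ``crude bounds'' give only polynomial-in-$T$ savings, so one needs exponential-moment (Gaussian-tail) estimates for the tail of the zero sum, and making $T$ enormous instead is self-defeating because it inflates $m$ and forces $\epsilon$ so small that the Gaussian-tail additive terms for the low zeros are no longer small; and (ii) after truncation the covariance of your comparison Gaussian no longer equals $B_q(a_i,a_j)/\var(q)$, whereas the $\Z$ in the statement has exactly these correlations, so a further Gaussian--Gaussian comparison at the same relative-error (moderate deviation) level is needed, which is again nontrivial because the target probabilities are as small as $1/n!$.

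The paper sidesteps all of this by grouping by characters instead of by zeros: it applies Proposition \ref{moddevprop} with $m=\varphi(q)-1$ summands $\widetilde{X}^{(\chi)}=\big(\frac{1}{\sqrt{\var(q)}}\re(2\chi(a_i)\sum_{\gamma_\chi>0}U(\gamma_\chi)/\sqrt{\tfrac14+\gamma_\chi^2})\big)_i$, so that every zero is already included, the covariances match $\Z$ exactly, and no truncation is needed. The price is that these blocks are not bounded (your bounded-summand observation no longer applies), so the indicator terms $\textbf{1}_{|X_i^{(\chi)}|>\epsilon/n}$ do not vanish; the paper controls them with the tail bound $\p(|\widetilde{X}^{(\chi)}_i|>r)\ll e^{-\Theta(r^2\varphi(q))}$, obtained by an exponential moment calculation as in Lamzouri, together with $r(\chi)_{i,i}\ll 1/\varphi(q)$ from $\sum_{\gamma_\chi>0}(\tfrac14+\gamma_\chi^2)^{-1}\ll\log q$. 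If you want to salvage your zero-by-zero decomposition, you would need to supply exactly this kind of exponential-moment input for the truncated tail plus the covariance-repair step; as written, the step ``the large-$\gamma_\chi$ contribution is handled by crude zero-counting bounds'' would fail at the required level of accuracy.
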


We remark that the condition $n \leq \varphi(q)^{1/12}$ is stronger than
necessary, but is convenient at one point in the proof, and without it the proposition is trivial because the ``big Oh'' term will not be less than 1.

\begin{proof}[Proof of Proposition \ref{normapproxpnr}]
In the first place, the condition $\delta \leq \frac{1}{\log^{2}n}$ means that Proposition \ref{smoothmultid} is applicable with $\delta$ replaced by $\delta/4$, and gives
$$ \ex h^{-}_{S,\delta/4}(\X) - O(e^{-2/\sqrt{\delta}}) \leq \p(\X \in R(S)) \leq \ex h^{+}_{S,\delta/4}(\X) + O(n^2 e^{-2/\sqrt{\delta}}) . $$
Now the shifts $D_i :=\frac{C_q(a_i)}{\sqrt{\var(q)}}$ in the components $X_i:=\frac{X(q,a_i)}{\sqrt{\var(q)}}$ (see \eqref{Xqa}) are a little awkward, since they cause the components to have non-zero mean. However, recalling our expressions for $C_q(a)$ \eqref{Cqa} and $\var(q)$ \eqref{AsympVariance}, together with the lower bound $\delta \geq (\frac{n^{5} \log q}{\sqrt{\varphi(q)}})^{1/3} \geq \frac{1}{\varphi(q)^{1/6}}$, we find that
$$ |D_i| \leq \frac{\sum_{m|q} 1}{\sqrt{\var(q)}} \ll \frac{1}{\varphi(q)^{0.49}} \leq \frac{\sqrt{\delta}}{4}. $$
It follows that for any pair $(i,j)$ we have
\[
X_i - X_j + \sqrt{\delta/4} \le \widetilde{X}_i - \widetilde{X}_j + \sqrt{\delta}
\]
and
\[
X_i - X_j - \sqrt{\delta/4} \ge \widetilde{X}_i - \widetilde{X}_j - \sqrt{\delta},
\]
where
\[
\widetilde{X}_i = X_i + D_i \qquad (1\le i\le n).
\]
So recalling the definitions \eqref{hg} of $h^{+}_{S,\delta/4}, h^{-}_{S,\delta/4}$, we may remove the shifts $D_i$ and still have the same upper and lower bounds for $\p(\X \in R(S))$, at the cost of replacing $h^{\pm}_{S,\delta/4}$ by $h^{\pm}_{S,\delta}$.
That is, we have
\[
\ex h^{-}_{S,\delta}(\widetilde{\X}) - O(e^{-2/\sqrt{\delta}}) \leq \p(\X \in R(S)) \leq \ex h^{+}_{S,\delta}(\widetilde{\X}) + O(n^2 e^{-2/\sqrt{\delta}}) . 
\]

Now we want to show that $\ex h^{+}_{S,\delta}(\widetilde{\X})$ may be replaced by $\ex h^{+}_{S,\delta}(\Z)$, and $\ex h^{-}_{S,\delta}(\widetilde{\X})$ by $\ex h^{-}_{S,\delta}(\Z)$, up to acceptable error terms. We apply Proposition \ref{moddevprop} with the sum over $1 \leq j \leq m$ replaced by a sum over characters $\chi \neq \chi_{0}$ mod $q$ (so $m = \varphi(q) - 1$), and with $\widetilde{X}_j$ replaced by
$$ \widetilde{X}^{(\chi)} := \left( \frac{1}{\sqrt{\var(q)}} \re \left(2\chi(a_i)\sum_{\gamma_{\chi}>0} \frac{U(\gamma_{\chi})}{\sqrt{\frac14+\gamma_{\chi}^2}}\right) \right)_{1 \leq i \leq n} . $$
Notice that these are indeed independent $\RR^n$-valued random vectors whose components have mean zero, since the underlying random variables $U(\gamma_{\chi})$ are independent and have mean zero. For the test functions $h_{S,\delta}^{\pm}$, we may take $C_1 \asymp n/\delta$ and $C_3 \asymp (n/\delta)^3$, so in Proposition \ref{moddevprop} we are permitted to make any choice of $0 < \epsilon \le \delta/(n \varphi(q)^{1/3})$.

Turning to the error terms in Proposition \ref{moddevprop}, for each $1 \leq i \leq n$ and each $\chi \neq \chi_{0}$ we have
$$ r(\chi)_{i,i} = \frac{1}{\var(q)} \sum_{\gamma_{\chi}>0} \frac{\ex (\re 2\chi(a_i) U(\gamma_{\chi}))^2}{\frac14+\gamma_{\chi}^2} = \frac{2}{\var(q)} \sum_{\gamma_{\chi}>0} \frac{1}{\frac14+\gamma_{\chi}^2} \ll \frac{1}{\varphi(q)} , $$
where the final inequality uses the standard estimate $\sum_{\gamma_{\chi}>0} \frac{1}{\frac14+\gamma_{\chi}^2} \ll \log q$ (see Corollary 10.18 of Montgomery and Vaughan~\cite{mv}, for example). Furthermore, an exponential moment calculation (as in the proof of Lemma 2.3 of Lamzouri~\cite{La1}) implies that $$\p(|\widetilde{X}^{(\chi)}_i| > r) \ll e^{-\Theta(r^2 \varphi(q))}$$ for each $1 \leq i \leq n$, each $\chi \neq \chi_{0}$ and any $r \geq 0$. This simply says that, as we might expect, the components $\widetilde{X}^{(\chi)}_i$ have Gaussian-type tails.  A consequence of this bound is
\[
\ex \textbf{1}_{|\widetilde{X}_i^{\chi}|>r} |\widetilde{X}_i^{\chi}|^3 \ll (r^3 + \frac{1}{\varphi(q)^{3/2}}) e^{-\Theta(r^2 \varphi(q))}.
\]
 Inserting all this information in Proposition \ref{moddevprop}, we get for any $0 < \epsilon \le \delta/(n \varphi(q)^{1/3})$ that
$$ \ex h^{\pm}_{S,\delta}(\widetilde{\X}) = e^{\Delta(\epsilon)} \ex h^{\pm}_{S,\delta}(\Z) + O\Biggl( (\frac{n}{\delta})^3 \varphi(q) n \biggl(\epsilon^3 + \frac{n^2}{\varphi(q)^{3/2}} \biggr) e^{- \Theta(\epsilon^{2} \varphi(q)/n^2)} \Biggr) , $$
where $|\Delta(\epsilon)| \ll \varphi(q) (n\epsilon/\delta)^3$.

Finally, if we take $\epsilon = \frac{\delta}{n \varphi(q)^{1/3} \log^{1/3}q}$ then, in view of the condition $\delta \geq (\frac{n^{5} \log q}{\sqrt{\varphi(q)}})^{1/3}$, we have $\epsilon^3 \geq \frac{n^2}{\varphi(q)^{3/2}}$. Then a quick calculation verifies that our ``big Oh'' term is of the form claimed in the proposition.
\end{proof}

\section{Proof of Theorem \ref{kExtremeBias}: an expression for the density}\label{densitysec}
Let $q$ be large. In this section, we can work under the relatively weak hypotheses that 
\be\label{kn_sec4}
1 \leq k \leq n/2 \leq \sqrt{\frac{\varphi(q)}{\log^{5}q}}.
\ee
 We shall consider the tuple of residues $$(a_1, \dots, a_n)= (b_1, -b_1, \dots, b_k, -b_k, b_{k+1}, \dots, b_{n-k}),$$ where the $b_i$'s satisfy the conclusion of Lemma \ref{correlation-2}. Our goal is to calculate explicit expressions for the probability density function of the vector $\Z = (Z_i)_{1 \leq i \leq n}$, where the $Z_i$ are the jointly Gaussian random variables from Proposition \ref{normapproxpnr} that correspond to the residues $a_i$.

The covariance matrix of $\Z$ is $C=C(\Z)=A+E$, where
\[
 A = \begin{bmatrix} 
        1 & \xi &  &  &  &  &  &  &  & &  &  \\
        \xi & 1  &  &   &  &  &  &  &  & &  & \\
         &  & 1 & \xi  &  &  &  &  &  &  &  & \\ 
         &  & \xi & 1  &  &  &  &  &  & &  & \\ 
          &  &  &  & \ddots &  &  &  & &  & & \\
          &  &  &  &  & 1 & \xi &  &  & &  & \\
          &  &  &  &  & \xi & 1 &  &  &  &  & \\
          &  &  &  &  &  &  & 1 &  & &  & \\
             &  &  &  &  &  &  &  & 1 &  &  & \\
          &  &  &  &  &  & &  &  & \ddots  &  &  \\
          &  &  &  &  &  &  &  &  & & 1 & \\
                     &  &  &  &  &  &  &  &  &  &  & 1
     \end{bmatrix}
\]
with
\begin{equation}\label{XI}
\xi := - \frac{\varphi(q)\log 2}{\Var(q)} \sim - \frac{\log 2}{\log q} ,
\end{equation}
 and all of the entries of $E$ are 
\emph{uniformly small}, in fact bounded by  $\eps$, where
\be\label{epsilon}
\eps\ll \frac{n \log^{4}q}{\Var(q)} \ll \frac{n\log^{3}q}{\varphi(q)}\ll \frac{\sqrt{\log q}}{\sqrt{\varphi(q)}} .
\ee

Let $f(x_1, \cdots, x_n)$ be the joint density function of $Z_1, \dots, Z_n$. Since the $Z_i$ are jointly Gaussian, we have
\begin{equation}\label{PDFExplicit}
 f(x_1, \cdots, x_n)=\frac{1}{(2\pi)^{n/2}  \sqrt{\det C}} \exp \Big\{ -\frac12 \xx^T C^{-1} \xx \Big\}.
\end{equation}
\newcommand{\norm}[1]{\| #1 \|}
For $\x=(x_1, \dots, x_n)\in \mathbb{R}^n$, we denote by $\norm{\x_{2k}}=(x_1^2+\cdots+x_{2k}^2)^{1/2}$ the Euclidean norm of the first $2k$ components of $\x$, and by $\norm{\x^{n-2k}}=(x_{2k+1}^2+\cdots+x_{n}^2)^{1/2}$ the Euclidean norm of the last $n-2k$ components of $\x$, and by $\norm{\x}=(x_{1}^2+\cdots+x_{n}^2)^{1/2}$ the Euclidean norm of the full vector $\x$. 
We prove the following result.
\begin{pro}\label{ASYMPDF} Let $q$ be sufficiently large and assume \eqref{kn_sec4}. Let $\x=(x_1, \cdots, x_n)\in \mathbb{R}^n$. Then we have
\begin{multline*}\label{LargeDeviationDensity}
f(\x) = \frac{1}{(2\pi)^{n/2}} \exp\Bigg( \xi
 \sum_{j=1}^kx_{2j-1} x_{2j}- \frac{\norm{\x_{2k}}^2}{2}\left(1+O\left(\frac{1}{(\log q)^2}\right)\right) \\
  - \frac{\norm{\x^{n-2k}}^2}{2}\(1+O\( \eps n \)\) + O\pfrac{k}{(\log q)^2} \Bigg)
\end{multline*}
where $\eps$ satisfies \eqref{epsilon}.
We also have the cruder estimate
\begin{equation}\label{ApproxDensity}
f(\x)= \frac{1}{(2\pi)^{n/2}}\exp\left( - \frac{\norm{\x_{2k}}^2}{2}\left(1+O\left(\frac{1}{\log q}\)\) - \frac{\norm{\x^{n-2k}}^2}{2}\left(1+O\left( \eps n \right)\right) + O\pfrac{k}{(\log q)^2} \right).
\end{equation}
\end{pro}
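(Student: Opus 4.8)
The plan is to compute the quadratic form $\x^T C^{-1}\x$ and the determinant $\det C$ directly, exploiting the fact that $C = A + E$ where $A$ is block-diagonal with $2\times 2$ blocks $\begin{pmatrix} 1 & \xi \\ \xi & 1 \end{pmatrix}$ (there are $k$ of these) followed by an $(n-2k)\times(n-2k)$ identity block, and where every entry of $E$ is bounded by $\eps$ with $\eps n$ extremely small by \eqref{epsilon}. First I would write $C^{-1} = (A+E)^{-1} = A^{-1} - A^{-1} E A^{-1} + A^{-1}EA^{-1}EA^{-1} - \cdots$, a Neumann-type expansion which converges since $\|A^{-1}\|\, \|E\|$ is tiny: the blocks of $A^{-1}$ are $\frac{1}{1-\xi^2}\begin{pmatrix} 1 & -\xi \\ -\xi & 1\end{pmatrix}$ together with the identity, so $\|A^{-1}\|_{\mathrm{op}} = O(1)$, while $\|E\|_{\mathrm{op}} \le n\eps$. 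The leading term $\x^T A^{-1}\x$ splits as $\frac{1}{1-\xi^2}\big(\norm{\x_{2k}}^2 - 2\xi\sum_{j=1}^k x_{2j-1}x_{2j}\big) + \norm{\x^{n-2k}}^2$; expanding $\frac{1}{1-\xi^2} = 1 + O(1/\log^2 q)$ using $\xi \sim -\frac{\log 2}{\log q}$ recovers exactly the main terms $\xi\sum x_{2j-1}x_{2j} - \frac{\norm{\x_{2k}}^2}{2}(1+O(1/\log^2 q))$ in the exponent (after the factor $-\tfrac12$), with the cross term $-\xi\cdot\frac{1}{1-\xi^2}$ simplifying to $\xi$ up to lower-order corrections that can be absorbed.

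The second step is to control the correction terms from $E$. The issue is that these terms are quadratic forms $\x^T M \x$ in the \emph{full} vector $\x$, and a naive bound $|\x^T M\x| \le \|M\|\norm{\x}^2$ would spoil things when $\norm{\x}$ is large (recall we want the density to be useful in the tails). The resolution is that $\|A^{-1}EA^{-1}\|_{\mathrm{op}} \ll n\eps$, so the total $E$-contribution to $-\tfrac12 \x^T C^{-1}\x$ is $O(n\eps \norm{\x}^2) = O(\eps n)\cdot\norm{\x_{2k}}^2 + O(\eps n)\cdot\norm{\x^{n-2k}}^2$; since $\eps n \ll \sqrt{\log q/\varphi(q)}\cdot\frac{\varphi(q)}{\log q}$... more precisely $\eps n = O(n^2\log^3 q/\varphi(q))$, which by \eqref{kn_sec4} is $O(1/\log^2 q)$, the part multiplying $\norm{\x_{2k}}^2$ merges into the existing $O(1/\log^2 q)$ error there, while the part multiplying $\norm{\x^{n-2k}}^2$ is displayed as the $O(\eps n)$ factor in the statement. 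One must be slightly careful to keep the relative (multiplicative) form of the error on each norm rather than an additive $O(\eps n \norm{\x}^2)$ term; this works because the $A^{-1}$ factors preserve the block structure to leading order, so the dominant $E$-correction to the coefficient of $x_i^2$ is still $O(n\eps)$ regardless of which block $i$ lies in.

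The third step handles the normalizing constant: $\det C = \det A \cdot \det(I + A^{-1}E) = (1-\xi^2)^k \cdot \big(1 + O(\mathrm{tr}(A^{-1}E)) + O((n\eps)^2)\big)$. Here $(1-\xi^2)^k = \exp(k\log(1-\xi^2)) = \exp(O(k\xi^2)) = \exp(O(k/\log^2 q))$, which gives the $O(k/\log^2 q)$ term in the exponent; and $\mathrm{tr}(A^{-1}E) = O(n^2\eps)$, which by \eqref{epsilon} and \eqref{kn_sec4} is negligible (indeed $O(1/\log^2 q)$, absorbable into $O(k/\log^2 q)$ since $k\ge 1$). Thus $\frac{1}{\sqrt{\det C}} = \frac{1}{(2\pi)^{n/2}}$... rather, $\frac{1}{(2\pi)^{n/2}\sqrt{\det C}} = \frac{1}{(2\pi)^{n/2}}\exp(O(k/\log^2 q))$, completing the first displayed formula. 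The cruder estimate \eqref{ApproxDensity} then follows by the trivial bound $|\xi\sum_{j=1}^k x_{2j-1}x_{2j}| \le |\xi|\cdot\frac12\norm{\x_{2k}}^2 \ll \frac{1}{\log q}\norm{\x_{2k}}^2$, folding the cross term into the $O(1/\log q)$ relative error on $\norm{\x_{2k}}^2$. The main obstacle is the bookkeeping in the second step: making sure the $E$-corrections are genuinely controlled uniformly in $\x$ in multiplicative form on each of the two norm blocks separately, rather than as a single additive term that would blow up in the tails — this is exactly the feature needed for the later moderate-deviation arguments, so it cannot be glossed over.
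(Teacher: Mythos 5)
Your proposal is correct and follows essentially the same route as the paper: a perturbation expansion of $C=A+E$ about the block-diagonal matrix $A$ (the paper's Lemma \ref{matrix} writes $C^{-1}=(I+E'+(E')^2+\cdots)A^{-1}$ with entrywise bounds and then uses Cauchy--Schwarz to get the $O(\eps n\|\x\|^2)$ correction, rather than operator norms, but the splitting of that correction over $\|\x_{2k}\|^2$ and $\|\x^{n-2k}\|^2$ and the treatment of $(1-\xi^2)^{-1}$, the cross term, and the cruder bound are the same). The one slip is your determinant step: $\mathrm{tr}(A^{-1}E)=O(n\eps)$, not $O(n^2\eps)$ (each row of $A^{-1}$ has at most two nonzero $O(1)$ entries), and indeed $n^2\eps$ need not be $O(1/\log^2 q)$ at the top of the range \eqref{kn_sec4}, so the asserted negligibility requires the sharper bound; with this correction (or the paper's direct estimate $\det C=(\det A)(1+O(\eps n))$) the argument goes through.
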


To establish this result we first prove the following lemma. 
\begin{lem}\label{matrix}
Suppose $q$ is sufficiently large that $|\xi| \le 1/2$ and $\eps \le \frac{1}{4n}$. Then
 \begin{itemize}
  \item $\det C = (\det A)(1 + O(\eps n))$;
  \item $C$ is invertible, and $C^{-1}=A^{-1}+F$, where the entries of $F$ are bounded in absolute value by $8\eps$.
\end{itemize}
\end{lem}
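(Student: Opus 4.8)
The plan is to treat $E$ as a perturbation of $A$. Since $|\xi|\le 1/2$ forces $\det A=(1-\xi^2)^k>0$, the matrix $A$ is invertible, and we may write $C=A(I+A^{-1}E)$. Thus $\det C=(\det A)\det(I+A^{-1}E)$, and once we know $I+A^{-1}E$ is invertible we get $C^{-1}=(I+A^{-1}E)^{-1}A^{-1}$, so everything reduces to controlling $\det(I+A^{-1}E)$ and $(I+A^{-1}E)^{-1}$ when $A^{-1}E$ is small. The one subtlety is that the crude bound $\|E\|\le n\eps$ for the spectral or Frobenius norm would cost a spurious factor of $n$; to keep all implied constants absolute I would work throughout with the maximum-row-sum matrix norm $\|M\|_\infty:=\max_i\sum_j|M_{ij}|$ (which is submultiplicative and bounds every entry of $M$), and exploit the sparsity and symmetry of $A^{-1}$.

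The basic input is that $A^{-1}$ is block diagonal, with $2\times 2$ blocks $(1-\xi^2)^{-1}\begin{pmatrix}1&-\xi\\-\xi&1\end{pmatrix}$ and an identity block; in particular $A^{-1}$ is symmetric, each of its rows and columns has at most two nonzero entries, and $\|A^{-1}\|_\infty=\|A^{-1}\|_1=\frac1{1-|\xi|}\le 2$. Since every entry of $E$ is at most $\eps$ and each row of $A^{-1}$ has $\ell^1$-norm at most $\frac1{1-|\xi|}$, every entry of $A^{-1}E$ is at most $\frac{\eps}{1-|\xi|}\le 2\eps$ in absolute value, hence $\|A^{-1}E\|_\infty\le\frac{n\eps}{1-|\xi|}\le 2n\eps\le\tfrac12$ using the hypothesis $\eps\le\frac1{4n}$. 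Consequently the Neumann series $\sum_{j\ge 0}(-A^{-1}E)^j$ converges in $\|\cdot\|_\infty$, so $I+A^{-1}E$ (and therefore $C$) is invertible, with $\|(I+A^{-1}E)^{-1}\|_\infty\le(1-2n\eps)^{-1}\le 2$ and hence $\|C^{-1}\|_\infty\le\|(I+A^{-1}E)^{-1}\|_\infty\|A^{-1}\|_\infty\le 4$.

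For the determinant I would use $\log\det(I+A^{-1}E)=\operatorname{tr}\log(I+A^{-1}E)=\sum_{j\ge 1}\frac{(-1)^{j-1}}{j}\operatorname{tr}\big((A^{-1}E)^j\big)$, legitimate because $\|A^{-1}E\|_\infty<1$. Writing $\operatorname{tr}((A^{-1}E)^j)$ as the sum over all $n^j$ tuples $(i_1,\dots,i_j)$ of the products $(A^{-1}E)_{i_1i_2}\cdots(A^{-1}E)_{i_ji_1}$, with each factor of modulus at most $2\eps$, gives $|\operatorname{tr}((A^{-1}E)^j)|\le(2n\eps)^j$ (equivalently, one can integrate Jacobi's formula $\frac{d}{dt}\log\det(I+tA^{-1}E)=\operatorname{tr}\big((I+tA^{-1}E)^{-1}A^{-1}E\big)$ over $t\in[0,1]$, bounding the trace entrywise). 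Summing the series yields $|\log\det(I+A^{-1}E)|\le\frac{2n\eps}{1-2n\eps}\le 4n\eps\le 1$, so $\det(I+A^{-1}E)=1+O(n\eps)$, which is the first bullet. For the second, set $F:=C^{-1}-A^{-1}$; expanding $C(A^{-1}+F)=I$ gives $CF=I-CA^{-1}=-EA^{-1}$, i.e.\ $F=-C^{-1}EA^{-1}$, whence $|F_{ij}|\le\eps\big(\sum_\ell|(C^{-1})_{i\ell}|\big)\big(\sum_m|(A^{-1})_{mj}|\big)\le\eps\,\|C^{-1}\|_\infty\,\|A^{-1}\|_1\le 8\eps$. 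The only real point --- the \emph{main obstacle} --- is precisely this bookkeeping: every estimate must be taken entrywise (for the traces and for $F$), or through $\|\cdot\|_\infty$ together with the sparsity of $A^{-1}$, rather than through a single operator-norm inequality, so that no factor of $n$ creeps into the constants.
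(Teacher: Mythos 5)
Your proof is correct and follows essentially the same route as the paper: factor $C=A(I+A^{-1}E)$, use the explicit $2\times2$-block inverse of $A$ to get entrywise bounds of size $2\eps$ on $A^{-1}E$, and conclude by a Neumann-series/perturbation argument, with the same constants ($\|A^{-1}\|_\infty\le 2$ giving the final $8\eps$). The only differences are cosmetic: you bound $\det(I+A^{-1}E)$ via the trace--log (or Jacobi) expansion where the paper expands the determinant over permutations, and you obtain $F=-C^{-1}EA^{-1}$ from the resolvent identity where the paper sums the Neumann series $(E')^j$ entrywise; both are sound.
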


\begin{proof}
Write $C=A+E=A(I-E')$, where $E'=-A^{-1}E$.  As $|\xi|\le 1/2$, and the inverse of $(\begin{smallmatrix} 1 & \xi \\ \xi & 1 \end{smallmatrix})$ is $(1-\xi^2)^{-1}(\begin{smallmatrix} 1 & -\xi \\ -\xi & 1 \end{smallmatrix})$,
we easily see that the entries of $E'$ are bounded in absolute value by $2\eps$.  Then $\det C = (\det A)\det (I-E')$
and, writing the determinant as a sum over permutations $\sigma\in S_n$, with $t$ the number of fixed points of $\sigma$,
\begin{eqnarray}
\det(I-E') & = & (1+ O(\eps))^n +O \Bigg( \sum_{\text{id}\ne\sigma\in S_n} (2\eps)^{n-t} (1+2\eps)^t \Bigg) \nonumber \\
& = & 1+ O(\eps n) + O \Bigg( \sum_{t=0}^{n-2} n^{n-t} (2\eps)^{n-t} \Bigg) = 1 + O(\eps n). \nonumber
\end{eqnarray}
Here we made several uses of our assumption that $|\eps n| \leq 1/4$.

Also
\[
 C^{-1} = (I + E' + (E')^2 + (E')^3 + \cdots ) A^{-1} ,
\]
where the infinite series converges because the entries of $(E')^j$ are (by an easy induction) bounded
in absolute value by $n^{j-1}(2\eps)^j \le \frac{2\eps}{2^{j-1}}$.  Hence, $C^{-1}= (I+E'')A^{-1} $, where 
the entries of $E''$ are bounded in absolute value by $4\eps$.  The second part now follows.
\end{proof}


\begin{proof}[Proof of Proposition \ref{ASYMPDF}]
First, note that $\det A= (1-\xi^2)^k$ and recall that the inverse of $(\begin{smallmatrix} 1 & \xi \\ \xi & 1 \end{smallmatrix})$ is $(1-\xi^2)^{-1}(\begin{smallmatrix} 1 & -\xi \\ -\xi & 1 \end{smallmatrix})$. Therefore, it follows from \eqref{PDFExplicit} and Lemma \ref{matrix} that
\begin{multline*}
 f(\xx) = (2\pi)^{-\frac{n}{2}} (1-\xi^2)^{-\frac{k}2}(1+O(\eps n))  \times \\
\times  \exp \Bigg\{ -\frac{1}{2(1-\xi^2)} \sum_{j=1}^k (x_{2j-1}^2+x_{2j}^2 -
 2\xi x_{2j-1} x_{2j}) - \frac12 \sum_{j=2k+1}^{n} x_j^2 +\sum_{h,j=1}^{n} \eps_{h,j} x_hx_j \Bigg\},
\end{multline*}
where $|\eps_{h,j}| \le 8\eps$ for every $1\le h,j\le n$. By the Cauchy--Schwarz inequality we obtain
$$ \Biggl|\sum_{h,j=1}^{n} \eps_{h,j} x_hx_j \Biggr| \leq 8\epsilon \Biggl(\sum_{h=1}^{n} |x_h| \Biggr)^2 \leq 8\eps n \norm{\x}^2 
=8\eps n \( \norm{\x_{2k}}^2  + \norm{\x^{n-2k}}^2 \),
 $$
and thus we deduce that
\begin{multline}\label{pdf3}
f(\xx)  =  (2\pi)^{-\frac{n}{2}} (1-\xi^2)^{-\frac{k}2}(1+O(\eps n)) \times  \\
 \; \times  \exp \Bigg\{\frac{\xi}{1-\xi^2}
 \sum_{j=1}^kx_{2j-1} x_{2j}- \norm{\x_{2k}}^2 \( \frac1{2(1-\xi^2)} + O(\eps n)\) - \norm{\x^{n-2k}}^2 \( \frac12 +O(\eps n) \) \Bigg\}. 
 \end{multline}
Invoking \eqref{XI}, we have that $\frac1{1-\xi^2}=1+O(1/\log^2 q)$.
Also, by our assumption \eqref{kn_sec4} and the estimate \eqref{epsilon},
it follows that
\[
\eps n \ll \frac{1}{\log^2 q}.
\]
Another application of Cauchy--Schwarz yields
\be\label{norm2k}
\sum_{j=1}^kx_{2j-1} x_{2j} \le \norm{\x_{2k}}^2,
\ee
and therefore
\bal
\frac{\xi}{1-\xi^2}  \sum_{j=1}^kx_{2j-1} x_{2j} &= \xi  \sum_{j=1}^kx_{2j-1} x_{2j} + O\( \xi^3 \norm{\x_{2k}}^2 \) \\ &= \xi  \sum_{j=1}^kx_{2j-1} x_{2j} + O\( (\log q)^{-3} \norm{\x_{2k}}^2 \) .
\eal
Combining these estimates with \eqref{pdf3} gives the first estimate in Proposition \ref{ASYMPDF}.

To obtain the crude estimate \eqref{ApproxDensity}, we combine \eqref{XI}
and \eqref{norm2k} to get
$$  \xi \sum_{j=1}^kx_{2j-1} x_{2j} \ll \frac{\norm{\x_{2k}}^2}{\log q}.$$
Inserting this bound into the first estimate of Proposition \ref{ASYMPDF} completes the proof.
\end{proof}

We conclude this section with an estimate for the tails of our multidimensional
Gaussian $\Z=(Z_1,\ldots,Z_n)$.

\begin{lem}\label{ztailslem}
Let $q$ be sufficiently large, and set $\mathcal{Q} := \log q$. 
Also assume \eqref{kn_sec4}. Then
\bal \p \Big(\sum_{i=2k+1}^{n} Z_{i}^2 > 10n\log(n\mathcal{Q})\Big) &\leq e^{-3n\log(n\mathcal{Q}) + O(n)}, \\ 
\p\Big(\sum_{i=1}^{2k} Z_{i}^2 > 10k\log(n\mathcal{Q})\Big)&\leq e^{-3k\log(n\mathcal{Q}) + O(k)}.
\eal
\end{lem}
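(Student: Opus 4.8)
The plan is to bound both tail probabilities using the standard exponential (Chernoff) method for quadratic forms in Gaussians, exploiting the explicit density formula in Proposition \ref{ASYMPDF}. The key observation is that the crude estimate \eqref{ApproxDensity} already decouples the ``first $2k$'' and ``last $n-2k$'' blocks of coordinates up to harmless relative error terms: in the last block the Gaussian is a product of essentially independent standard normals (covariances bounded by $\eps$ with $\eps n \ll 1/\log^2 q$), and in the first block the coordinates pair up into $2\times 2$ correlated blocks with correlation $\xi\sim -\tfrac{\log 2}{\log q}$.

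For the second inequality, I would write
$$ \p\Big(\sum_{i=1}^{2k} Z_i^2 > 10k\log(n\mathcal{Q})\Big) \le e^{-10tk\log(n\mathcal{Q})}\, \ex \exp\Big(t\sum_{i=1}^{2k} Z_i^2\Big) $$
for a suitable $t\in(0,1/2)$, say $t = 1/4$. Using \eqref{ApproxDensity}, the moment generating factor $\ex\exp(t\sum_{i\le 2k} Z_i^2)$ is, up to a factor $e^{O(k/\log^2 q)}$, the integral of $(2\pi)^{-n/2}\exp(-(1/2 - t + O(1/\log q))\norm{\x_{2k}}^2 - (1/2+O(\eps n))\norm{\x^{n-2k}}^2)$ over $\RR^n$; the last $n-2k$ coordinates integrate to $(1+o(1))$ each, and each of the first $2k$ coordinates contributes $(1 - 2t + O(1/\log q))^{-1/2} \ll 1$. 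Hence the whole MGF is $e^{O(k)}$, giving a bound $e^{-10tk\log(n\mathcal{Q}) + O(k)} \le e^{-3k\log(n\mathcal{Q}) + O(k)}$ once $t$ is fixed (e.g. $t=1/4$ gives exponent $-\tfrac{5}{2}k\log(n\mathcal Q)$, and any $t$ slightly below $1/2$, say $t=2/5$, gives $-4k\log(n\mathcal Q)$; choosing $t$ so that $10t\ge 3$ with room to spare suffices). The first inequality is proved identically, applying the Chernoff bound to $\sum_{i=2k+1}^n Z_i^2$ with the same $t$; here all $n-2k$ relevant coordinates are nearly-independent standard normals, so the MGF is $\prod(1-2t+O(\eps n))^{-1/2} = e^{O(n)}$, and the first $2k$ coordinates integrate to $e^{O(k)} = e^{O(n)}$, yielding $e^{-10t n\log(n\mathcal Q) + O(n)} \le e^{-3n\log(n\mathcal Q) + O(n)}$.

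One technical point to handle carefully is that the big-Oh error terms in \eqref{ApproxDensity} multiply $\norm{\x_{2k}}^2$ and $\norm{\x^{n-2k}}^2$, so they are not uniformly bounded over $\RR^n$; but since $1/\log q$ and $\eps n$ are both $o(1)$ (indeed $\ll 1/\log^2 q$ by \eqref{kn_sec4} and \eqref{epsilon}), the effective quadratic form still has all coefficients bounded below by, say, $1/2 - t - o(1) > 0$ for $t$ bounded away from $1/2$, so the Gaussian integrals converge and the stated bounds on the MGF hold with the claimed $O(n)$ and $O(k)$ slack. The main (minor) obstacle is simply bookkeeping these relative errors and verifying that the additive $O(k/\log^2 q)$ term from \eqref{ApproxDensity} is absorbed into the $O(k)$ (resp. $O(n)$) error; there is no substantive difficulty, as the concentration is driven entirely by the $e^{-\Theta(t)\cdot(\text{dimension})\log(n\mathcal Q)}$ factor coming from the threshold.
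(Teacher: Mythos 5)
Your Chernoff-type argument is fine for the first inequality, and in spirit it is the same as the paper's proof (the paper simply restricts the Gaussian integral to the region $\norm{\x^{n-2k}}^2>10n\log(n\mathcal{Q})$, extracts the factor $e^{-3n\log(n\mathcal{Q})}$ and is left with a convergent integral $e^{O(n)}$ --- essentially your exponential tilt with $t$ fixed). However, there is a genuine gap in your treatment of the second inequality. You compute $\ex\exp\big(t\sum_{i\le 2k}Z_i^2\big)$ from the full $n$-dimensional crude density \eqref{ApproxDensity} and assert that, since each of the last $n-2k$ coordinates integrates to $1+o(1)$, ``the whole MGF is $e^{O(k)}$''. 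The per-coordinate factor is $(1+O(\eps n))^{-1/2}=1+O(1/\log^2 q)$, but there are up to $n-2k$ such coordinates, so the product is $e^{O(\eps n(n-2k))}=e^{O(n/\log^{2}q)}$, not $e^{O(k)}$. Under the hypothesis \eqref{kn_sec4} one may have, say, $k=1$ and $n\asymp\sqrt{\varphi(q)/\log^{5}q}$, in which case $n/\log^{2}q$ is a power of $q$ and completely swamps the target exponent $3k\log(n\mathcal{Q})\asymp\log q$; your bound $e^{-10tk\log(n\mathcal{Q})+O(k)+O(n/\log^{2}q)}$ is then trivial. The same defect would appear if you tried to prove the second inequality by integrating out the last block of the approximate density in any other way: a pointwise upper bound on the $n$-dimensional density necessarily carries the $e^{O(n/\log^2 q)}$ slack.

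The fix is the one the paper uses: for the second inequality, work directly with the exact $2k$-dimensional marginal of $(Z_1,\dots,Z_{2k})$, i.e.\ the centred Gaussian whose covariance matrix is the top-left $2k\times 2k$ block of $C$. Re-running the analysis of Lemma \ref{matrix} and Proposition \ref{ASYMPDF} for this submatrix (the Cauchy--Schwarz step now only involves $2k$ coordinates, so the off-diagonal error is $O(\eps k\norm{\x_{2k}}^2)\ll \norm{\x_{2k}}^2/\log^{2}q$, and the determinant factor is $e^{O(k/\log^{2}q)}$) gives the density
$$ \frac{1}{(2\pi)^{k}}\exp\Big\{-\tfrac12\norm{\x_{2k}}^2\big(1+O(1/\log q)\big)+O\big(k/\log^{2}q\big)\Big\} , $$
and then your Chernoff bound (or the paper's direct region-splitting) with $t$ fixed, say $t=2/5$, yields $e^{-4k\log(n\mathcal{Q})+O(k)}\le e^{-3k\log(n\mathcal{Q})+O(k)}$ as required. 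With that substitution your argument is correct; as written, the step ``hence the whole MGF is $e^{O(k)}$'' does not follow.
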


Notice that both of these probabilities are negligible compared with the target probability $\exp\left(-\frac{0.6 k\log (n/k)}{\log q}\right) \frac{(n-2k)!}{n!}$ that we shall be aiming for later (see \eqref{smallbiassuffices}).

\begin{proof}[Proof of Lemma \ref{ztailslem}]
Arguing as before, the density function of $(Z_i)_{2k+1 \leq i \leq n}$ takes the form
$$ \frac{(1+O(\eps n))}{(2\pi)^{(n-2k)/2}} \exp\Big\{-\frac12 \norm{\x^{n-2k}}^2(1 + O(\eps n)) \Big\} , $$
and since $\eps n \ll \frac{1}{\log^{2}q}$ we get
\bal
\p\Big(\sum_{i=2k+1}^{n} Z_{i}^2 > 10n\log(n\mathcal{Q})\Big) & \ll  \int_{\norm{\x^{n-2k}}^2 > 10n\log(n\mathcal{Q})} \frac{e^{-\frac12 \norm{\x^{n-2k}}^2 (1 + O(\frac{1}{\log^{2}q}))}}{(2\pi)^{(n-2k)/2}} dx_{2k+1} ... dx_n \\
& \leq  e^{-3n\log(n\mathcal{Q})} \left(\int_{-\infty}^{\infty} \frac{e^{- \frac{x^2}{6} }}{\sqrt{2\pi}} dx \right)^{n-2k} = e^{-3n\log(n\mathcal{Q}) + O(n)} .
\eal

The proof of the second part of the lemma is exactly similar, using that (similarly as in Proposition \ref{ASYMPDF}) the density function of $(Z_i)_{1 \leq i \leq 2k}$ takes the form
$$ \frac{1}{(2\pi)^{k}}\exp\left\{ - \frac12 \norm{\x}^2 \left(1+O\left(\frac{1}{\log q}\right)\right) + O\left(\frac{k}{(\log q)^2}\right) \right\} . $$
\end{proof}

\section{Proof of Theorem \ref{kExtremeBias}: some reductions}\label{reductionsec}

 In this section, we carry out some preliminary reductions and calculations for the proof of Theorem \ref{kExtremeBias}.   Again, we let $(a_1, \dots, a_n)= (b_1, -b_1, \dots, b_k, -b_k, b_{k+1}, \dots, b_{n-k})$ be a tuple of residues as in section \ref{densitysec} (i.e. with the $b_i$ satisfying the conclusion of Lemma \ref{correlation-2}).

We begin by discussing the first bound \eqref{SmallBias} claimed in Theorem \ref{kExtremeBias}. We let 
$$S := \{(i,i+1) : 1 \leq i \leq 2k-1\} \cup \{(2k,j) : 2k+1 \leq j \leq n\}.$$ 
Then in view of \eqref{DensityMeasure} and the surrounding discussion in section \ref{prelimsec}, and applying Proposition \ref{normapproxpnr} with the choice $\delta = 1/(n\log q)^5$, we have
\begin{eqnarray}
\delta_{2k}(q;a_1, \dots, a_n) & = & \p\left(\frac{X(q,a_1)}{\sqrt{\var(q)}} > \frac{X(q,a_2)}{\sqrt{\var(q)}} > ... > \frac{X(q,a_{2k})}{\sqrt{\var(q)}} > \max_{2k+1 \leq j \leq n} \frac{X(q,a_j)}{\sqrt{\var(q)}} \right) \nonumber \\
& \leq & \left(1 + O\left(\frac{1}{\log q}\right)\right)\ex h^{+}_{S,\delta}(\Z) + O\left(n^2 e^{-2(n\log q)^{5/2}} + n e^{- \Theta\big(\frac{\varphi(q)^{1/3}}{n^{14} (\log q)^{10 + 2/3}}\big)}\right) , \nonumber
\end{eqnarray}
where $\Z = (Z_i)_{1 \leq i \leq n}$ are the jointly Gaussian random variables from Proposition \ref{normapproxpnr} that correspond to the residues $a_i$. If we now restrict to the range $1 \leq k \leq n/A \leq \varphi(q)^{1/50}$ in Theorem \ref{kExtremeBias}, then we have $(\frac{n^{5} \log q}{\sqrt{\varphi(q)}})^{1/3} \leq \delta \leq \frac{1}{\log^{2}n}$ in Proposition \ref{normapproxpnr}, and furthermore the ``big Oh'' term above is $\ll e^{-(n\log q)^2}$. This is negligible for \eqref{SmallBias}.

Thus, to prove \eqref{SmallBias}, it will suffice to show that
\begin{equation}\label{smallbiassuffices}
\ex h^{+}_{S,\delta}(\Z) \leq \exp\left(-\frac{0.6  k\log (n/k)}{\log q}\right) \frac{(n-2k)!}{n!} ,
\end{equation}
for $q$ sufficiently large.

We expect (since this is how we constructed things) that the smooth function $h_{S,\delta}^{+}$ should behave a lot like the indicator function $\textbf{1}_{R(S)}$. Thus, if we apply $h_{S,\delta}^{+}$ to a vector of {\em independent} standard normal random variables, and take expectations, we expect the answer to be close to $\frac{(n-2k)!}{n!}$, the probability that such a vector would satisfy the ordering dictated by $S$. Our next auxiliary lemma is an upper bound that reflects this expectation.

\begin{lem}\label{plussmoothinglem}
Let $1 \leq k \leq n/2$, and let $0 < \delta \leq \frac{1}{5 k^5 \log^{2}n}$. If $\W = (W_1, ..., W_n)$ is a vector of independent standard normal random variables, and $S$ is as above, we have
$$ \ex h_{S,\delta}^{+}(\W) \leq (1+ O(\sqrt{\delta \log(1/\delta) k^4 \log n}) )\frac{(n-2k)!}{n!} . $$
\end{lem}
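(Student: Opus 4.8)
The plan is to bound $\ex h_{S,\delta}^{+}(\W)$ by carefully comparing the smoothed product $\prod_{(i,j)\in S} \theta((W_i-W_j+\sqrt\delta)/\delta)$ against the honest indicator $\textbf{1}_{R(S)}(\W)$, exploiting the independence and the structure of $S$, namely that $S$ consists of a ``chain'' part $\{(i,i+1):1\le i\le 2k-1\}$ on the first $2k$ coordinates, together with $\{(2k,j):2k+1\le j\le n\}$ which only compares coordinate $2k$ against each of the last $n-2k$ coordinates. First I would record the exact probability that independent standard normals satisfy the ordering in $R(S)$: this is the probability that $W_1>W_2>\cdots>W_{2k}$ and $W_{2k}>W_j$ for all $j>2k$, which by symmetry equals $\frac{(n-2k)!}{n!}$ (the number of orderings of the $n$ variables compatible with $S$ is exactly $(n-2k)!$, since the first $2k$ are forced into decreasing order and must dominate the remaining $n-2k$, which may be permuted freely). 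So the target is precisely $\p(\W\in R(S))$, and the task is to show the smoothing inflates this by at most a factor $1+O(\sqrt{\delta\log(1/\delta)k^4\log n})$.

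The key step is to split the expectation according to whether $\W$ lies well inside $R(S)$, just outside a slightly shrunken copy of $R(S)$, or far outside. Concretely, for a gap parameter $\eta$ (to be chosen as a small multiple of $\sqrt{\delta\log(1/\delta)}$), let $R_\eta$ be the set where all the defining inequalities $W_i-W_j>\eta$ hold with margin $\eta$. On $R_\eta$ we use the trivial bound $h_{S,\delta}^{+}\le 1$, contributing at most $\p(\W\in R_\eta)$. On the complement of $R(S)$ enlarged by $\eta$ --- i.e. where some $W_i-W_j+\sqrt\delta < -(\text{something})$, more precisely where some $W_i - W_j < -\eta$ --- each offending factor $\theta((W_i-W_j+\sqrt\delta)/\delta)$ is at most $\theta((\sqrt\delta-\eta)/\delta)\ll e^{-(\eta-\sqrt\delta)/\delta}$ once $\eta$ exceeds $\sqrt\delta$ by a suitable amount; choosing $\eta \asymp \sqrt{\delta\log(1/\delta)}$ (times a large constant, and using that there are at most $k$ offending pairs since the last $n-2k$ comparisons all share the index $2k$, so only $O(k^2)$ relevant pairs and in practice $O(k)$ can simultaneously fail badly in a way that survives — here I would just crudely bound by the number of pairs in $S$, which is $2k-1+(n-2k)\le n$, but organise the count so the $k$-dependence is what appears) makes this tail contribution bounded by $\delta^{10}$ say, which is absorbed. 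The main quantitative work is the ``boundary shell'' $R(S)\setminus R_\eta$: I would bound its probability by a union bound over the $\le 2k$ chain inequalities and, for the block of $n-2k$ inequalities against coordinate $W_{2k}$, by conditioning on $W_{2k}$ and noting $\p(W_{2k}-\eta < W_j < W_{2k}) \le \eta\cdot(\text{density bound})$; summing and integrating against the Gaussian density of $W_{2k}$ gives a bound of the shape $O(\eta\cdot(\text{number of inequalities})\cdot\p(\W\in R(S)))$ after renormalising — the point being that conditionally on the ordering being satisfied, the probability that some gap is smaller than $\eta$ is $O(\eta\cdot k^2)$ or so, uniformly. Multiplying through, $\ex h_{S,\delta}^+(\W) \le (1+O(\eta k^2 + \dots))\p(\W\in R(S))$, and substituting $\eta\asymp\sqrt{\delta\log(1/\delta)}$ and being slightly generous with the combinatorial factors yields the stated $1+O(\sqrt{\delta\log(1/\delta)k^4\log n})$; the $\log n$ presumably enters because one needs the shell estimate to hold on the event that all variables are of size $O(\sqrt{\log n})$, outside of which one uses the Gaussian tail directly, and the hypothesis $\delta\le \frac{1}{5k^5\log^2 n}$ is exactly what makes $\eta$ small enough that $R_\eta$ is still nonempty and the error is $o(1)$.

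I expect the main obstacle to be getting the boundary-shell probability estimate with the correct power of $k$: one must argue that conditioning on $\W\in R(S)$, the chance that one of the consecutive gaps $W_i - W_{i+1}$ in the length-$2k$ chain, or one of the gaps $W_{2k}-W_j$, is smaller than $\eta$ is only $O(k^2\eta)$ (rather than, say, exponentially large in $k$), which requires handling the fact that the chain gaps are not independent of the ordering event. The clean way is to pass to the order statistics of the $n$ variables and use exchangeability, so that $R(S)$ becomes a fixed event about a uniformly random permutation together with the spacings of the order statistics of $n$ i.i.d.\ normals, and then the relevant small-gap events concern spacings of order statistics, which are controlled because the normal density is bounded. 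The rest --- the $h^+\le 1$ bound on the bulk, the exponential smallness of the far tail, and the choice of $\eta$ --- is routine once this is in place.
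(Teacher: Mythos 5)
Your overall strategy (trivial bound on a shrunken copy of $R(S)$, exponential smallness far outside, and a ``boundary shell'' estimate at scale $\eta\asymp\sqrt{\delta\log(1/\delta)}$) is genuinely different from the paper's argument, and it correctly identifies where the difficulty lies — but the crucial step is left unproved, and the justification you offer for it would not give the stated bound. The whole content of the lemma is the claim that, \emph{relative} to $\p(\W\in R(S))=(n-2k)!/n!$, the probability that the (near-)ordering holds while some relevant gap is below $\eta$ costs only a factor $O(\eta\,\mathrm{poly}(k)\sqrt{\log n})$. Your justification — exchangeability plus ``the spacings of order statistics are controlled because the normal density is bounded'' — is insufficient in the range of $\delta$ allowed by the lemma: the bounded-density bound on a spacing of order statistics gives only $\p\big(X_{(i)}-X_{(i+1)}<\eta\big)\ll n\eta$, and an error factor $1+O(n\eta)$ is useless when $\delta$ is as large as $1/(5k^5\log^2 n)$ (the error must be $\mathrm{poly}(k,\log n)\sqrt{\delta\log(1/\delta)}$, with no power of $n$). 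To make your route work one needs the finer fact that the top $2k$ order statistics of $n$ standard normals sit at height $\asymp\sqrt{2\log n}$, where the hazard rate is $\asymp\sqrt{\log n}$, so the rank-$i$ spacing is anti-concentrated at scale $1/(i\sqrt{\log n})$ and $\p\big(X_{(i)}-X_{(i+1)}<\eta\big)\ll \eta\, i\sqrt{\log n}$; summed over $i\le 2k$ this gives the correct $\eta k^2\sqrt{\log n}=\sqrt{\delta\log(1/\delta)k^4\log n}$ (your attribution of the $\log n$ to a truncation $|W_i|=O(\sqrt{\log n})$, and your conditional estimate ``$O(\eta k^2)$'', are off by exactly this $\sqrt{\log n}$). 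A second, smaller gap: your exchangeability factorization applies to the exact ordering event, but the smoothed weight is non-negligible also where some gaps lie in $[-2\sqrt{\delta},0)$, i.e.\ where the exact ordering fails; handling this region (e.g.\ by showing that if all the relevant order-statistic spacings exceed $\eta$ then near-ordering forces exact ordering) requires additional bookkeeping that you have not supplied. Also, ``bounded by $\delta^{10}$, which is absorbed'' is not enough for the far tail — $(n-2k)!/n!$ can be as small as $n^{-2k}$, so you must check, as the hypothesis $\delta\le 1/(5k^5\log^2 n)$ permits, that $e^{-c/\sqrt{\delta}}\ll n^{-2k}\sqrt{\delta}$.

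For comparison, the paper sidesteps all anti-concentration/spacings analysis by a shift: since $h^{+}_{S,\delta}(\W)\ll e^{-1/\sqrt{\delta}}$ unless $W_i-W_j\ge -2\sqrt{\delta}$ for all $(i,j)\in S$, setting $\widetilde W_i=W_i+(2k+1-i)2\sqrt{\delta}$ for $i\le 2k$ shows $\ex h^{+}_{S,\delta}(\W)\le \p(\widetilde W_1>\cdots>\widetilde W_{2k}>\max_{j>2k}\widetilde W_j)+O(e^{-1/\sqrt{\delta}})$, an \emph{exact} ordering probability for Gaussians with shifted means. One then compares the shifted and unshifted densities directly by expanding $(x_i-(2k+1-i)2\sqrt{\delta})^2$, with the error $e^{O(k^{3/2}\sqrt{\delta}\,\|\x_{2k}\|+k^3\delta)}$ controlled by splitting at $\|\x_{2k}\|^2\le 10(k\log n+\log(1/\delta))$ — this is where the $\sqrt{k^4\log n}$ arises — and the remaining integral is exactly $(n-2k)!/n!$ by symmetry. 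If you want to pursue your route, the order-statistics spacing estimate above is the missing lemma you would have to prove.
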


We remark that our previous choice $\delta = 1/(n\log q)^5$ satisfies the hypotheses of this Lemma.

\begin{proof}[Proof of Lemma \ref{plussmoothinglem}]
By definition, we have $h_{S,\delta}^{+}(\W) \leq g(-\sqrt{\delta}) \ll e^{-1/\sqrt{\delta}}$ unless $W_i - W_j \geq -2\sqrt{\delta}$ for all pairs $(i,j) \in S$. Furthermore, in that case we still have $h_{S,\delta}^{+}(\W) \leq 1$. So recalling our construction of $S$, if we define $\widetilde{W}_i := W_i + (2k+1-i)2\sqrt{\delta}$ for $1 \leq i \leq 2k$, and define $\widetilde{W}_i := W_i$ for $2k+1 \leq i \leq n$, then
$$ \ex h^{+}_{S,\delta}(\W) \leq \p\left(\widetilde{W}_{1} > \widetilde{W}_{2} > ... > \widetilde{W}_{2k} > \max_{2k< i \leq n} \widetilde{W}_{i}\right) + O(e^{-1/\sqrt{\delta}}) . $$
Here our upper bound condition on $\delta$ implies that 
$$
e^{-1/\sqrt{\delta}}\ll \sqrt{\delta \log(1/\delta) k^4 \log n} \cdot n^{-2k} \leq \sqrt{\delta \log(1/\delta) k^4 \log n} \frac{(n-2k)!}{n!},$$
and so the ``big Oh'' term is acceptable for Lemma \ref{plussmoothinglem}.

Next, by independence the probability here is simply
\bal
&= \int_{x_1 > x_2 > ... > x_{2k}} \frac{e^{- \sum_{i=1}^{2k} (x_i - (2k+1-i)2\sqrt{\delta})^{2}/2}}{(2\pi)^k} \left( \int_{-\infty}^{x_{2k}} \frac{e^{-t^{2}/2}}{\sqrt{2\pi}} dt \right)^{n-2k} dx_1 ... dx_{2k} \\
&=  \int_{x_1 > x_2 > ... > x_{2k}} \frac{e^{- \frac12 \norm{\x_{2k}}^2 + O(k^{3/2}\sqrt{\delta} \norm{\x_{2k}} + k^{3}\delta)}}{(2\pi)^k} \left( \int_{-\infty}^{x_{2k}} \frac{e^{-t^{2}/2}}{\sqrt{2\pi}} dt \right)^{n-2k} dx_1 ... dx_{2k} . 
\eal
The part of the integral where $\norm{\x_{2k}}^2 > 10(k\log n + \log(1/\delta))$ is
$$ \ll \int_{\norm{\x_{2k}}^2 > 10(k\log n + \log(1/\delta))} \frac{e^{- \frac12 \norm{\x_{2k}}^2 +O(k^{3/2} \sqrt{\delta} \norm{\x_{2k}})}}{(2\pi)^k} dx_1 ... dx_{2k} \leq e^{-3(k\log n + \log(1/\delta)) + O(k)} , $$
which again is acceptable for Lemma \ref{plussmoothinglem}. And the complementary part of the integral, where $\norm{\x_{2k}}^2 \leq 10(k\log n + \log(1/\delta))$, is
$$ \leq e^{O\big(\sqrt{\delta k^{3} (k\log n + \log(1/\delta))}\big)} \int_{x_1 > x_2 > ... > x_{2k}} \frac{e^{- \frac12 \norm{\x_{2k}}^2}}{(2\pi)^k} \left( \int_{-\infty}^{x_{2k}} \frac{e^{-t^{2}/2}}{\sqrt{2\pi}} dt \right)^{n-2k} dx_1 ... dx_{2k} . $$
Finally, the integral here is simply $\p\left(W_1 > W_2 > ... > W_{2k} > \max_{2k< i \leq n} W_{i}\right)$, which by symmetry is equal to $\frac{(n-2k)!}{n!}$.
\end{proof}

Now we turn to the second bound \eqref{LargeBias} claimed in Theorem \ref{kExtremeBias}. Here we let 
\bal
S^{\#} &:= \{(2i-1,2i+1) : 1 \leq i \leq k-1 \} \cup \{(2i+2,2i) : 1 \leq i \leq k-1\} \\
&\qquad \cup \{(2k-1,j) : 2k \leq j \leq n\} \cup \{(i,2k) : 2k+1 \leq i \leq n\}.
\eal
With the choice $\delta = 1/(n\log q)^{5}$ as before, we see that to prove \eqref{LargeBias} it will suffice to show that
\begin{equation}\label{largebiassuffices}
\ex h^{-}_{S^{\#},\delta}(\Z) \geq \exp\left(\frac{0.6  k\log (n/k)}{\log q}\right) \frac{(n-2k)!}{n!} .
\end{equation}

\begin{lem}\label{minussmoothinglem}
Let $1 \leq k \leq n/2$, and let $0 < \delta \leq \frac{1}{5 k^5 \log^{2}n}$. If $\W = (W_1, ..., W_n)$ is a vector of independent standard normal random variables, we have
$$ (1 - O(n^{2}\sqrt{\delta}))\frac{(n-2k)!}{n!} \leq \ex h_{S^{\#},\delta}^{-}(\W) \leq \ex h_{S^{\#},\delta}^{+}(\W) \leq (1+ O(\sqrt{\delta \log(1/\delta) k^4 \log n}) )\frac{(n-2k)!}{n!} . $$
\end{lem}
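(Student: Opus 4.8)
The plan is to bound $\ex h_{S^{\#},\delta}^{-}(\W)$ from above and below by comparing it with the probability of the corresponding ordering event for independent standard normals, in the same spirit as the proof of Lemma \ref{plussmoothinglem}. The upper bound is essentially free: since $h_{S^{\#},\delta}^{-} \leq h_{S^{\#},\delta}^{+}$ pointwise (because $\theta$ is increasing, so $g(x-\sqrt{\delta}) \leq g(x+\sqrt{\delta})$), and since $S^{\#}$ has exactly $2k-1 + (k-1) + (n-2k+1) + (n-2k) = 2(n-k)-1$ pairs, which for the purpose of the argument behaves like the constraint set of a ``$2k$ winners and losers'' ordering, the bound $\ex h_{S^{\#},\delta}^{+}(\W) \leq (1+O(\sqrt{\delta\log(1/\delta)k^4\log n}))\frac{(n-2k)!}{n!}$ follows by the exact argument used for Lemma \ref{plussmoothinglem}, after introducing the deterministic shifts $\widetilde{W}_i := W_i \pm (\text{const})\sqrt{\delta}$ that absorb the $\pm\sqrt{\delta}$ offsets in the definition of $h^{+}$, splitting off the event where $\|\x_{2k}\|^2$ is large (negligible) and Taylor-expanding the Gaussian weight on the complementary event. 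So the work is entirely in the lower bound.

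For the lower bound, first I would use Proposition \ref{smoothmultid}: since $\delta \leq 1/(5k^5\log^2 n) \leq 1/\log^2 n$ and $e^{-1/\sqrt{\delta}} \leq 1/n^2$ for $q$ (hence $n$) large, we have
\[
\ex h_{S^{\#},\delta}^{-}(\W) \geq \p(\W \in R(S^{\#})) - O(e^{-1/\sqrt{\delta}}),
\]
and $e^{-1/\sqrt{\delta}} \leq n^{-2k} \cdot (\text{small}) \ll n^2 \sqrt{\delta}\,\frac{(n-2k)!}{n!}$, so the error is acceptable. Then I would observe that $\p(\W \in R(S^{\#}))$ is exactly the probability that the $n$ independent standard normals are arranged so that $W_{a_1} > W_{a_3} > \cdots > W_{a_{2k-1}} > \max_{2k+1\le j\le n} W_{a_j}$ and $\min_{2k+1\le j\le n} W_{a_j} > W_{a_{2k}} > \cdots > W_{a_2}$ — a single prescribed relative ordering among the $2k$ ``extreme'' coordinates together with all $n-2k$ ``middle'' coordinates lying between $W_{a_{2k}}$ and $W_{a_{2k-1}}$. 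By exchangeability this probability equals $\frac{(n-2k)!}{n!}$ exactly (it is the probability of one specific ordering of the $2k$ extreme variables relative to the block of middle ones, and there are $n!/(n-2k)!$ equally likely such configurations). Hence $\p(\W \in R(S^{\#})) = \frac{(n-2k)!}{n!}$, which already gives $\ex h_{S^{\#},\delta}^{-}(\W) \geq \frac{(n-2k)!}{n!} - O(e^{-1/\sqrt{\delta}}) \geq (1 - O(n^2\sqrt{\delta}))\frac{(n-2k)!}{n!}$.

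Actually the cleaner route, which also double-checks the constant, is: use $h_{S^{\#},\delta}^{-}(\W) \geq \textbf{1}_{R(S')}(\W)$ where $S'$ is $S^{\#}$ with every constraint tightened by $2\sqrt{\delta}$, i.e. $h^{-} \geq \textbf{1}$ on the set where $W_i - W_j \geq 2\sqrt{\delta}$ for all $(i,j)\in S^{\#}$ (since there $g(W_i-W_j-\sqrt{\delta}) \geq g(\sqrt{\delta}) = \theta(1/\sqrt{\delta}) = 1 - O(e^{-1/\sqrt{\delta}})$ on each of the $\leq 2n$ factors, so the product is $1 - O(ne^{-1/\sqrt{\delta}}) \geq 1/2$). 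Shifting coordinates by multiples of $2\sqrt{\delta}$ to remove these offsets, this probability is $\p(\W + \textbf{v} \in R(S^{\#}))$ for a fixed vector $\textbf{v}$ with $\|\textbf{v}\|_\infty \ll k\sqrt{\delta}$, which by a standard Gaussian density perturbation (Taylor-expanding $e^{-\|\x-\textbf v\|^2/2}$ and controlling the tail $\|\x\| \gg \sqrt{\log(1/\delta)+k\log n}$ as in Lemma \ref{plussmoothinglem}) differs from $\p(\W \in R(S^{\#})) = \frac{(n-2k)!}{n!}$ by a relative factor $1 + O(n^2\sqrt{\delta})$. The main obstacle, such as it is, is purely bookkeeping: keeping track of which coordinates receive which shift in $S^{\#}$ (the odd-indexed extremes shifted one way, the even-indexed the other, the middle block unshifted) and verifying that the resulting perturbation of the Gaussian integral is genuinely of relative size $O(n^2\sqrt{\delta})$ rather than, say, $O(n^{3}\sqrt{\delta})$ — this just needs the observation that only $O(n)$ of the pairwise constraints are ever ``active'' and that $\|\textbf v\|^2 \ll k^2\delta \cdot k = k^3 \delta$, which is dominated by the claimed error since $\delta \leq 1/(5k^5\log^2 n)$. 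There is no genuine analytic difficulty here; the lemma is a routine perturbation of the exchangeability identity $\p(\W\in R(S^{\#})) = (n-2k)!/n!$.
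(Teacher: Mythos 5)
Your upper bound sketch is fine and matches the paper: shift the odd-indexed coordinates up and the even-indexed ones down by multiples of $2\sqrt{\delta}$, reduce to an ordering probability, and repeat the argument of Lemma \ref{plussmoothinglem}; and $h^{-}_{S^{\#},\delta}\le h^{+}_{S^{\#},\delta}$ is immediate. The problem is in the lower bound. Your ``Route 1'' misapplies Proposition \ref{smoothmultid}: that proposition gives $h^{-}_{S^{\#},\delta} \le \textbf{1}_{R(S^{\#})} + O(e^{-1/\sqrt{\delta}})$, i.e.\ an \emph{upper} bound for $\ex h^{-}_{S^{\#},\delta}(\W)$ in terms of $\p(\W\in R(S^{\#}))$, not the lower bound you assert. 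The reverse inequality fails pointwise: in the boundary layer where some pair $(i,j)\in S^{\#}$ has $0<W_i-W_j<2\sqrt{\delta}$, the indicator equals $1$ while $h^{-}$ is exponentially small, and this layer has probability of relative size roughly $n\sqrt{\delta}$ compared with $(n-2k)!/n!$, not $O(e^{-1/\sqrt{\delta}})$. Had Route 1 been valid, the lemma would hold with an exponentially small deficit, which is exactly what one cannot have; the $O(n^{2}\sqrt{\delta})$ loss is the actual content of the lower bound, so this route begs the question.

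Your ``Route 2'' starts correctly (on the event that $W_i-W_j\ge 2\sqrt{\delta}$ for all $(i,j)\in S^{\#}$, each factor of $h^{-}$ is $\ge g(\sqrt{\delta})=\theta(1/\sqrt{\delta})$, so $h^{-}\ge 1-O(ne^{-1/\sqrt{\delta}})$ there), but the shift-and-perturb step is both under-justified and unnecessary. You only track $\|\mathbf{v}\|^{2}\ll k^{3}\delta$, whereas the dominant term in $e^{-\|\x-\mathbf{v}\|^{2}/2}/e^{-\|\x\|^{2}/2}=e^{\langle\x,\mathbf{v}\rangle-\|\mathbf{v}\|^{2}/2}$ is the cross term; on the bulk region $\|\x_{2k}\|\asymp\sqrt{k\log n+\log(1/\delta)}$ it contributes a relative error of order $\sqrt{\delta\,k^{3}(k\log n+\log(1/\delta))}$, which is not $O(n^{2}\sqrt{\delta})$ when $k\asymp n$ or when $\delta$ is extremely small, so the bound in the stated form does not follow from your bookkeeping (it would suffice for the application with $\delta=(n\log q)^{-5}$, but it is not the lemma as stated). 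The paper avoids the perturbation entirely with a symmetry observation you are missing: let $G$ be the event that $|W_i-W_j|\ge 2\sqrt{\delta}$ for \emph{all} $i\ne j$. Then $h^{-}_{S^{\#},\delta}(\W)\ge 1-O(ne^{-1/\sqrt{\delta}})$ on $R(S^{\#})\cap G$; since $G$ is invariant under permutations of the coordinates and the $W_i$ are exchangeable, $\p\big(R(S^{\#})\cap G\big)=\frac{(n-2k)!}{n!}\,\p(G)$ exactly, and the union bound gives $\p(G)\ge 1-O(n^{2}\sqrt{\delta})$, the $n^{2}$ being just the number of pairs. This yields the stated lower bound with no Gaussian expansion at all.
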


\begin{proof}[Proof of Lemma \ref{minussmoothinglem}]
For the upper bound, take
\bal
\widetilde{W}_{2i-1} &= W_{2i-1} + (k+1-i) 2\sqrt{\delta} \quad (1\le i\le k), \\
\widetilde{W}_{2i} &= W_{2i} - (k+1-i) 2\sqrt{\delta} \quad (1\le i\le k), \\
\widetilde{W}_{i} &= W_{i} \quad (2k+1 \le i\le n). 
\eal
Then, if $W_i-W_j \ge -2\sqrt{\delta}$ for all $(i,j)\in S^{\#}$, we have
\[
\widetilde{W}_1 > \widetilde{W}_3 > \cdots > \widetilde{W}_{2k-1} > \max_{2k+1\le j\le n} \widetilde{W}_j, \quad
\widetilde{W}_2 < \widetilde{W}_4 < \cdots < \widetilde{W}_{2k} < \min_{2k+1\le j\le n} \widetilde{W}_j.
\]
The upper bound then follows similarly as in Lemma \ref{plussmoothinglem}. 

For the lower bound, we note that if $(x_1, \dots, x_n)\in R(S^{\#})$ is such that $|x_i-x_j|\geq 2\sqrt{\delta}$ for all $i\neq j$, then checking the definition \eqref{hg} of $h_{S^{\#},\delta}^{-}$ we obtain
$$ h_{S^{\#},\delta}^{-}(x_1, \dots, x_n) \geq \prod_{(i, j)\in S^{\#}} g(\sqrt{\delta}) \geq  1 - O(ne^{-1/\sqrt{\delta}}).$$
Hence, if we temporarily let $G$ denote the event that $|W_i - W_j| \geq 2\sqrt{\delta}$ for all $i \neq j$, then we see that $\ex h_{S^{\#},\delta}^{-}(\W)$ is
$$
\geq (1 - O(ne^{-1/\sqrt{\delta}})) \p(W_{1} > W_{3} > ... > W_{2k-1} > \{W_i\}_{2k< i \leq n} > W_{2k}>\cdots>W_4>W_2, \;  \text{and} \; G).
$$
However, by symmetry the probability here is equal to $\frac{(n-2k)!}{n!} \p(G)$, and by the union bound we have
\[
 \p(G) \geq 1 - \sum_{i \neq j} \p(|W_i - W_j| \leq 2\sqrt{\delta}) = 1 - O(n^{2}\sqrt{\delta}) . \qedhere
\]
\end{proof}

\section{Proof of Theorem \ref{kExtremeBias}: the conclusion}
In this section, we assume as in Theorem \ref{kExtremeBias} that $q$ is large, and that $1 \leq k \leq n/A \leq \varphi(q)^{1/50}$ for a sufficiently large absolute constant $A$. Let us observe at the outset that these conditions imply, in section \ref{densitysec} and specifically in Proposition \ref{ASYMPDF}, that $\eps n \ll \frac{n^2 \log^{3}q}{\varphi(q)} \leq \frac{1}{n\log^{10}q}$, say. We also recall the random variables $Z_1, ..., Z_n$ from sections \ref{densitysec} and \ref{reductionsec}, which have density function $f(x_1,  \dots, x_n)$. Finally, as in Lemma \ref{ztailslem}, we shall sometimes write $\mathcal{Q} := \log q$.

We begin by proving the statement \eqref{SmallBias} in Theorem \ref{kExtremeBias}, in the special case where $k\log(n/k) > 1000\log\log q$, say. (The case where $k\log(n/k) \leq 1000\log\log q$ is a bit more delicate because the saving factor $\exp\left(-\frac{0.6 k\log (n/k)}{\log q}\right)$ we are aiming for is only slightly smaller than 1, so that will be dealt with later.)
 In view of the discussion in section \ref{reductionsec}, to do this it will suffice to prove the upper bound \eqref{smallbiassuffices} for $\ex h^{+}_{S,\delta}(\Z)$. Let us introduce notation for the following four ``good'' events:
$$ \mathcal{A} := \Big\{ \sum_{i=2k+1}^{n} Z_i^2 \leq 10n\log(n\mathcal{Q}) \Big\} , \;\;\;\;\; \mathcal{B} := \Big\{ \sum_{i= 1}^{2k} Z_i^2 \leq 10k \log(n\mathcal{Q}) \Big\} , $$
$$ \mathcal{C} := \big\{ \max_{2k< i \leq n} Z_{i} > \sqrt{\log(n/k)} \big\} , \;\;\;\;\; \mathcal{U} := \{ \min(Z_1, ..., Z_{2k}) \geq \max_{2k< i \leq n} Z_{i} - 2\sqrt{\delta}\} , $$
where $\delta = 1/(n\log q)^5$ is the same as in section \ref{reductionsec}.

Firstly, Lemma \ref{ztailslem} and the definition of $h^{+}_{S,\delta}$ (in particular the fact that $0 \leq h^{+}_{S,\delta} \leq 1$) implies that 
\begin{align}
|\ex h^{+}_{S,\delta}(\Z) - \ex h^{+}_{S,\delta}(\Z) \textbf{1}_{\mathcal{A} \cap \mathcal{B} \cap \mathcal{U}}| \notag &\le (1-\p(\mathcal{A} \cap \mathcal{B} )) + \ex \big(
h^+_{S,\delta}(\Z) \(1-\textbf{1}_{\mathcal{U}}  \) \big) \\
&\ll e^{-3k\log(n\mathcal{Q})+O(k)}+g(-\sqrt{\delta})  \notag \\
&\ll e^{-3k\log(n\mathcal{Q})+O(k)} + e^{-(n\log q)^{5/2}} \notag \\
&\ll e^{-3k\log(n\mathcal{Q})+O(k)}\le \frac{(n-2k)!}{n!} e^{-k\log (n\mathcal{Q})+O(k)} .\label{hZABU}
\end{align}

 Next,
introduce the ``truncated'' set
\[
S' = \{(i,i+1) : 1\le i\le 2k-1 \},
\]
considered as a subset of $\{1,\ldots,2k\}^2$, the associated set
$R(S') \subset \RR^{2k}$ and function $h_{S',\delta}^+: \RR^{2k}\to \RR$.
Using the crude estimate from Proposition \ref{ASYMPDF} we have
\begin{eqnarray}\label{notccalc}
\ex h^{+}_{S,\delta}(Z) \textbf{1}_{\mathcal{A} \cap \mathcal{B} \cap \mathcal{U} \cap \{\mathcal{C} \; \text{fails}\}}  & \leq &  e^{O(\frac{k}{\log^{2}q})} \idotsint\limits_{\norm{\x_{2k}}^2 \leq 10k\log(n\mathcal{Q})} h_{S',\delta}^{+}(x_1, \dots, x_{2k}) \frac{e^{-\frac{\norm{\x_{2k}}^2}{2}(1 + O(\frac{1}{\log q}))}}{(2\pi)^k} dx_1\cdots dx_{2k}  \nonumber \\
&& \times \idotsint\limits_{\substack{\max_{2k+1 \leq i \leq n} x_i \leq \sqrt{\log(n/k)}, \\ \norm{\x^{n-2k}}^2 \leq 10n\log(n\mathcal{Q})}} \frac{e^{- \frac{\norm{\x^{n-2k}}^2}{2} (1 + O(\frac{1}{n\log^{10}q}))}}{(2\pi)^{(n-2k)/2}} dx_{2k+1} ... dx_n.
\end{eqnarray}
 Now the integral on the first line here is
$$ \leq e^{O(\frac{k\log(n\mathcal{Q})}{\log q})} \int h_{S',\delta}^{+}(x_1, \dots, x_{2k}) \frac{e^{-\frac{\norm{\x_{2k}}^2}{2} }}{(2\pi)^k} dx_1\cdots dx_{2k} \ll e^{O(\frac{k\log(n\mathcal{Q})}{\log q})} \frac{1}{(2k)!} = \frac{e^{O(k)}}{k^{2k}} , $$
where the second inequality follows from Lemma \ref{plussmoothinglem} (with $n$ replaced by $2k$), and the final equality follows from Stirling's formula. Meanwhile, the integral on the second line is
$$ \ll \left( \int_{-\infty}^{\sqrt{\log(n/k)}} \frac{e^{- \frac{x^2}{2}}}{\sqrt{2\pi}} dx \right)^{n-2k} = \left( 1 - \Theta\left(\frac{e^{- \log(n/k)/2}}{\sqrt{\log(n/k)}} \right) \right)^{n-2k} = \exp\Bigg\{- \Theta\( \sqrt{\frac{nk}{\log(n/k)}} \) \Bigg\} . $$
(Notice here that $n/k \geq A$ is large, under our hypotheses.) So putting things together, we have shown that
\begin{eqnarray}\label{cfailsdisplay}
\ex \left\{h^{+}_{S,\delta}(\Z) \textbf{1}_{\mathcal{A} \cap \mathcal{B} \cap \mathcal{U} \cap \{\mathcal{C} \; \text{fails}\}}\right\} \leq \frac{e^{O(k)}}{k^{2k}} e^{- \Theta\big( \sqrt{\frac{nk}{\log(n/k)}}\big) } &=& \frac{1}{n^{2k}} e^{O(k\log(n/k)) - \Theta\big(k \sqrt{\frac{n/k}{\log(n/k)}}\big)} \nonumber \\
&\leq& \frac{1}{n^{2k}} e^{-1000k\log(n/k)} \nonumber \\
&\leq& \frac{(n-2k)!}{n!}e^{-1000k\log(n/k)}  ,
\end{eqnarray}
say.

On the other hand, the first part of Proposition \ref{ASYMPDF} implies that on those tuples $(x_1, ..., x_n)$ corresponding to all four events $\mathcal{A},\mathcal{B},\mathcal{C}, \mathcal{U}$, the density function $f(x_1,  \dots, x_n)$ satisfies
\bal
f(\x) &\leq \frac{e^{\xi k\log(n/k) + O(\frac{k}{\log^{2}q})}}{(2\pi)^{n/2}} \exp\left\{ - \frac{\norm{\x_{2k}}^2}{2}\Big(1+O\Big(\frac{1}{\log^2 q}\Big)\Big) - \frac{\norm{\x^{n-2k}}^2}{2}\Big(1+O\Big( \frac{1}{n\log^{10}q} \Big)\Big) \right\}  \\
&= \frac{e^{\xi k\log(n/k) + O\big(\frac{k\log(n\mathcal{Q})}{\log^{2}q}\big)}}{(2\pi)^{n/2}} \exp\left\{ - \frac{\norm{\x}^2}{2} \right\} , 
\eal
where $\xi \sim -\frac{\log 2}{\log q}$ (recall \eqref{XI}). The crucial thing to notice here is the emergence of the bias term $e^{\xi k\log(n/k)}$, which came from the non-trivial correlations of size $\xi$ amongst pairs $(Z_{2i-1},Z_{2i})_{1 \leq i \leq k}$, together with the fact that we have arranged to have $Z_1, ..., Z_{2k} \geq \sqrt{\log(n/k)} - 2\sqrt{\delta}$. Using this upper bound, we get
\begin{eqnarray}
\ex\left\{h^{+}_{S,\delta}(\Z) \textbf{1}_{\mathcal{A} \cap \mathcal{B} \cap \mathcal{C} \cap \mathcal{U}} \right\}& \leq & e^{\xi k\log(n/k) + O(\frac{k\log(n \mathcal{Q})}{\log^{2}q})} \int h_{S,\delta}^{+}(x_1, \dots, x_{n}) \frac{e^{- \frac{\norm{\x}^2}{2}}}{(2\pi)^{n/2}} dx_1 ... dx_n \nonumber \\
&=& e^{\xi k\log(n/k) + O(\frac{k\log(n\mathcal{Q})}{\log^{2}q})} \ex h_{S,\delta}^+ (\W) 
 \leq  e^{\xi k\log(n/k) + O(\frac{k\log(n\mathcal{Q})}{\log^{2}q})} \frac{(n-2k)!}{n!}, \nonumber
\end{eqnarray}
where $\W=(W_1,\ldots,W_n)$ is a vector of independent standard Gaussian
random variables, and
where the second inequality follows from Lemma \ref{plussmoothinglem} on noting that 
 $$\(1+ O\(\sqrt{\delta \log(1/\delta) k^4 \log n}\) \) \leq \(1+ O\(\sqrt{\frac{\log^2 (n\mathcal{Q})}{n\log^{5}q}}\) \) \leq e^{O(\frac{k\log(n\mathcal{Q})}{\log^{2}q})}.$$
The error in the exponent satisfies
\[
O\pfrac{k\log(n\mathcal{Q})}{\log^{2}q} = O\( k\(\frac{\log\log q}{\log^2 q}+\frac{1}{\log q}\)\) = O\pfrac{k}{\log q},
\] 
and thus
\be\label{ABCU}
\ex\left\{h^{+}_{S,\delta}(\Z) \textbf{1}_{\mathcal{A} \cap \mathcal{B} \cap \mathcal{C} \cap \mathcal{U}} \right\} \le e^{\xi k\log(n/k) +O\pfrac{k}{\log q}}
\frac{(n-2k)!}{n!}.
\ee
Again recall that $\xi \sim -\frac{\log 2}{\log q}$ as $q\to\infty$.  Hence,
for $q$ sufficiently large, and $A$ sufficiently large,
 the $O$-term in the exponent is 
smaller than $0.01 |\xi k\log (n/k)|$ and consequently
\[
\xi k \log(n/k) +O\pfrac{k}{\log q} \le - 0.65 \frac{k \log(n/k)}{\log q}.
\]
Adding this together with \eqref{hZABU} and \eqref{cfailsdisplay}, and recalling that we are in the special case where $k\log(n/k) > 1000\log\log q$,
we indeed  have a bound $\leq \exp\left(-\frac{0.6 k\log (n/k)}{\log q}\right) \frac{(n-2k)!}{n!}$ as we wanted for \eqref{smallbiassuffices}.

\vspace{12pt}
Next we turn to the other case where $k\log(n/k) \leq 1000\log\log q$. Notice that in this case, in the definition of the event $\mathcal{B}$ we have $10k\log(n\mathcal{Q}) = 10k(\log k + \log(n/k) + \log\mathcal{Q}) \ll (\log\log q)^2$, and so we get $||\x_{2k}||^2 \ll (\log\log q)^2$. Thus, if $W_1, ..., W_n$ are independent standard normal random variables, and if we write `primed' events (e.g., $\mathcal{A}'$) for the corresponding events with each $Z_i$ replaced by $W_i$, then the above calculations in fact imply that
\begin{eqnarray}
\ex h^{+}_{\delta}(\Z) \textbf{1}_{\mathcal{A} \cap \mathcal{B} \cap \mathcal{U} \cap \mathcal{C}} & \leq & e^{\xi k\log(n/k) + O(\frac{k}{\log q})} \ex h^{+}_{\delta}(\W) \textbf{1}_{\mathcal{A}' \cap \mathcal{B}' \cap \mathcal{U}' \cap \mathcal{C}'} \nonumber \\
& = & (1 + \xi k\log(n/k) + O(\frac{k}{\log q})) \ex h^{+}_{\delta}(\W) \textbf{1}_{\mathcal{A}' \cap \mathcal{B}' \cap \mathcal{U}' \cap \mathcal{C}'} , \nonumber
\end{eqnarray}
and that (using the crude density estimate from Proposition \ref{ASYMPDF}, as in \eqref{notccalc})
\begin{eqnarray}
&& |\ex h^{+}_{\delta}(\Z) \textbf{1}_{\mathcal{A} \cap \mathcal{B} \cap \mathcal{U} \cap \{\mathcal{C} \; \text{fails}\}} - \ex h^{+}_{\delta}(\W) \textbf{1}_{\mathcal{A}' \cap \mathcal{B}' \cap \mathcal{U}' \cap \{\mathcal{C}' \; \text{fails}\}}| \nonumber \\
& \ll & \int_{\mathcal{A} \cap \mathcal{B} \cap \mathcal{U} \cap \{\mathcal{C} \; \text{fails}\}} h_{\delta}^{+}(x_1, \dots, x_{n}) \frac{e^{- \frac{||\x||^2}{2}}}{(2\pi)^{n/2}} \left( \frac{k}{\log^{2}q} + \frac{||\x_{2k}||^2}{\log q} \right) dx_1 ... dx_{n} . \nonumber
\end{eqnarray}
The part of this integral where $||\x_{2k}||^2 \leq 10k\log k$ contributes $\ll \frac{k\log k}{\log q} \ex h^{+}_{\delta}(\W) \textbf{1}_{\mathcal{A}' \cap \mathcal{B}' \cap \mathcal{U}' \cap \{\mathcal{C}' \; \text{fails}\}}$, which is $\ll \frac{1}{\log q} \frac{(n-2k)!}{n!}$ by the calculations leading to \eqref{cfailsdisplay}. Meanwhile, another short calculation shows the other part of the integral contributes
\begin{eqnarray}
& \ll & \frac{1}{\log q} \int_{||\x_{2k}||^2 > 10k\log k} \frac{e^{- \frac{||\x_{2k}||^2}{2}}}{(2\pi)^{k}} ||\x_{2k}||^2 dx_1 ... dx_{2k}  \cdot \p(\max_{2k< i \leq n} W_{i} \leq \sqrt{\log(n/k)}) \nonumber \\
& \ll & \frac{1}{\log q} e^{-3k\log k + O(k)} e^{- \Theta\big( \sqrt{\frac{nk}{\log(n/k)}}\big) } \ll \frac{1}{\log q} \frac{(n-2k)!}{n!} . \nonumber
\end{eqnarray}
The calculations leading to \eqref{cfailsdisplay} further imply that $\xi k\log(n/k) \ex h^{+}_{\delta}(\W) \textbf{1}_{\mathcal{A}' \cap \mathcal{B}' \cap \mathcal{U}' \cap \{\mathcal{C}' \; \text{fails}\}} \ll \frac{k \log(n/k)}{\log q}\ex h^{+}_{\delta}(\W) \textbf{1}_{\mathcal{A}' \cap \mathcal{B}' \cap \mathcal{U}' \cap \{\mathcal{C}' \; \text{fails}\}} \ll \frac{1}{\log q} \frac{(n-2k)!}{n!}$. So we can collect together the above computations, along with \eqref{hZABU}, in the form
\begin{eqnarray}
\ex h^{+}_{\delta}(\Z) & = & \ex h^{+}_{\delta}(\Z) \textbf{1}_{\mathcal{A} \cap \mathcal{B} \cap \mathcal{U} \cap \mathcal{C}} + \ex h^{+}_{\delta}(\Z) \textbf{1}_{\mathcal{A} \cap \mathcal{B} \cap \mathcal{U} \cap \{\mathcal{C} \; \text{fails}\}} + (\ex h^{+}_{\delta}(\Z) - \ex h^{+}_{\delta}(\Z) \textbf{1}_{\mathcal{A} \cap \mathcal{B} \cap \mathcal{U}}) \nonumber \\
& \leq & (1 + \xi k\log(n/k) + O(\frac{k}{\log q})) \ex h^{+}_{\delta}(\W) \textbf{1}_{\mathcal{A}' \cap \mathcal{B}' \cap \mathcal{U}'} + O(\frac{1}{\log q} \frac{(n-2k)!}{n!}) . \nonumber
\end{eqnarray}
And Lemma \ref{plussmoothinglem} implies that $\ex h^{+}_{\delta}(\W) \textbf{1}_{\mathcal{A}' \cap \mathcal{B}' \cap \mathcal{U}'} \leq \ex h^{+}_{\delta}(\W) \leq (1+ O(\sqrt{\frac{\log\log q}{\log^{4}q}}) ) \frac{(n-2k)!}{n!}$, which suffices to establish \eqref{smallbiassuffices} in the case where $k\log(n/k) \leq 1000\log\log q$.

\vspace{12pt}
Next we shall prove the other statement \eqref{LargeBias} in Theorem \ref{kExtremeBias}. To do this it will suffice to prove the lower bound \eqref{largebiassuffices} for $\ex h_{S^{\#},\delta}^{-}(\Z)$. In addition to the three typical events $\mathcal{A}, \mathcal{B}, \mathcal{C}$ considered above, we introduce the following:
$$ \mathcal{D} := \left\{ \min_{2k< i \leq n} Z_{i} < - \sqrt{\log(n/k)} \right\} , $$
$$ \mathcal{V} := \big\{Z_1, Z_3, ..., Z_{2k-1} \geq \max_{2k< i \leq n} Z_{i} - 2\sqrt{\delta}, \;\;\; \text{and} \;\;\; Z_2, Z_4, ..., Z_{2k} \leq \min_{2k< i \leq n} Z_{i} + 2\sqrt{\delta}\big\} . $$

Similarly as above, on those tuples $(x_1, ..., x_n)$ corresponding to all the events $\mathcal{A}, \mathcal{B}, \mathcal{C}, \mathcal{D}, \mathcal{V}$, the density function $f(x_1,  \dots, x_n)$ satisfies
$$ f(\x) \geq \frac{e^{-\xi k\log(n/k) + O\big(\frac{k\log(n\mathcal{Q})}{\log^{2}q}\big)}}{(2\pi)^{n/2}} \exp\left( - \frac{\norm{\x}^2}{2} \right) . $$
 Again, the key thing to notice here is the large bias factor $e^{-\xi k\log(n/k)}$, which emerged because we arranged to have 
\bal
\min(Z_1, Z_3, ..., Z_{2k-1}) &\geq \sqrt{\log(n/k)} - 2\sqrt{\delta},\; \text{ and} \\
\max(Z_2, Z_4, ..., Z_{2k}) &\leq -(\sqrt{\log(n/k)} - 2\sqrt{\delta}).
\eal
 So if we again let $\W = (W_1, ..., W_n)$ denote a vector of independent standard Gaussian random variables, we have that
\begin{eqnarray}
\ex h^{-}_{S^{\#},\delta}(\Z) & \geq & \ex h^{-}_{S^{\#},\delta}(\Z) \textbf{1}_{\mathcal{A} \cap \mathcal{B} \cap \mathcal{V}} = \ex h^{-}_{S^{\#},\delta}(\Z) \textbf{1}_{\mathcal{A} \cap \mathcal{B} \cap \mathcal{C} \cap \mathcal{D} \cap \mathcal{V}} + \ex h^{-}_{S^{\#},\delta}(\Z) \textbf{1}_{\mathcal{A} \cap \mathcal{B} \cap \mathcal{V} \cap \{\mathcal{C} \; \text{or} \; \mathcal{D} \; \text{fails}\}} \nonumber \\
& \geq & e^{-\xi k\log(n/k) + O\big(\frac{k\log(n\mathcal{Q})}{\log^{2}q}\big)}
  \ex h^{-}_{S^{\#},\delta}(\W) \textbf{1}_{\mathcal{A}' \cap \mathcal{B}' \cap \mathcal{C}' \cap \mathcal{D}' \cap \mathcal{V}'} + \ex h^{-}_{S^{\#},\delta}(\Z) \textbf{1}_{\mathcal{A} \cap \mathcal{B} \cap \mathcal{V} \cap \{\mathcal{C} \; \text{or} \; \mathcal{D} \; \text{fails}\}} , \nonumber
\end{eqnarray}
where the `primed' events (e.g., $\mathcal{A}'$) are the corresponding events with each $Z_i$ replaced by $W_i$.
As before, for $A$ and $q$ large enough, the $O$-term in the exponent is negligible and we have $e^{-\xi k\log(n/k) + O\big(\frac{k\log(n\mathcal{Q})}{\log^{2}q}\big)} \geq e^{0.65 k \frac{\log(n/k)}{\log q}}$, and so
\begin{eqnarray}\label{bigbiasorg}
\ex h^{-}_{S^{\#},\delta}(\Z) & \geq & e^{0.65 k \frac{\log(n/k)}{\log q}} \ex h^{-}_{S^{\#},\delta}(\W) \textbf{1}_{\mathcal{A}' \cap \mathcal{B}' \cap \mathcal{V}'} \nonumber \\
&& + \ex h^{-}_{S^{\#},\delta}(\Z) \textbf{1}_{\mathcal{A} \cap \mathcal{B} \cap \{\mathcal{C} \; \text{or} \; \mathcal{D} \; \text{fails}\} \cap \mathcal{V}} - e^{0.65 k \frac{\log(n/k)}{\log q}} \ex h^{-}_{S^{\#},\delta}(\W) \textbf{1}_{\mathcal{A}' \cap \mathcal{B}' \cap \{\mathcal{C}' \; \text{or} \; \mathcal{D}' \; \text{fails}\} \cap \mathcal{V}'} .
\end{eqnarray}
Furthermore (the proofs of) Lemmas \ref{ztailslem} and \ref{minussmoothinglem} together imply, with our choice $\delta = 1/(n\log q)^5$, that
$$  \ex h^{-}_{S^{\#},\delta}(\W) \textbf{1}_{\mathcal{A}' \cap \mathcal{B}' \cap \mathcal{V}'} \geq (1 - O(n^2\sqrt{\delta})) \frac{(n-2k)!}{n!} + O(e^{-3k\log(n\mathcal{Q}) + O(k)}) = (1 - O(\frac{1}{\log^{5/2}q})) \frac{(n-2k)!}{n!} . $$

We can also mimic the calculations leading to \eqref{cfailsdisplay}, this time using Lemma \ref{minussmoothinglem} in place of Lemma \ref{plussmoothinglem} to bound $\int h_{S^{\#},\delta}^{-}(x_1, \dots, x_{2k}) \frac{e^{-\frac12 \norm{\x_{2k}}^2 }}{(2\pi)^k} dx_1\cdots dx_{2k}$, and obtain that
$$ \ex \left\{h^{-}_{S^{\#},\delta}(\W) \textbf{1}_{\mathcal{A}' \cap \mathcal{B}' \cap \mathcal{V}' \cap \{\mathcal{C}' \; \text{or} \; \mathcal{D}' \; \text{fails}\}}\right\} \leq \frac{1}{n^{2k}} e^{-1000k\log(n/k)} \le \frac{(n-2k)!}{n!}e^{-1000k\log(n/k)}  , $$
and the same for $\ex \left\{h^{-}_{S^{\#},\delta}(\Z) \textbf{1}_{\mathcal{A} \cap \mathcal{B} \cap \mathcal{V} \cap \{\mathcal{C} \; \text{or} \; \mathcal{D} \; \text{fails}\}}\right\}$. Hence when $k\log(n/k) > 1000\log\log q$, the final two terms in \eqref{bigbiasorg} make a negligible contribution, and the desired lower bound \eqref{largebiassuffices} for $\ex h^{-}_{S^{\#},\delta}(\Z)$ follows. In the remaining case where $k\log(n/k) \leq 1000\log\log q$, we can again duplicate our previous approach and show the {\em difference} of the final two terms in \eqref{bigbiasorg} is $\ll \frac{1}{\log q} \frac{(n-2k)!}{n!}$, which is negligible for \eqref{largebiassuffices}.
\qed


\begin{thebibliography}{DDDD}

\bibitem{bentkus} V. Y. Bentkus,
\emph{Large deviations in Banach spaces}.
Theory Probab. Appl. \textbf{31} (1986), no. 4, 627-632. 

\bibitem{FeM}   A. Feuerverger and G. Martin,
\emph{Biases in the Shanks-R\'enyi prime number race}.
Experiment. Math. \textbf{9} (2000), no. 4, 535-570.

\bibitem{Fi} D. Fiorilli, \emph{Highly biased prime number races.}
 Algebra Number Theory \textbf{8} (2014), no. 7, 1733-1767.

\bibitem{FiM}  D. Fiorilli and G. Martin,
\emph{Inequities in the Shanks-R\'enyi Prime Number Race: An asymptotic formula for the densities}. 
 J. Reine Angew. Math.  \textbf{676} (2013), 121-212.

\bibitem{FK2} K. Ford and S. Konyagin,
\emph{Chebyshev's conjecture and the prime number race}. Modern Problems of Number Theory and its Applications; Topical Problems Part II (Tula, Russia, 2001).

\bibitem{GM}  A. Granville and G. Martin,
\emph{Prime number races}. Amer. Math. Monthly \textbf{113} (2006), no. 1, 1-33.

\bibitem{HL} A. J. Harper and Y. Lamzouri, {\it Orderings of weakly correlated random 
variables, and prime number races with many contestants}. To appear in Probab. Theory Related Fields.

\bibitem{La1}  Y. Lamzouri, \emph{The Shanks-R\'enyi prime number race with many contestants.} 
Math. Res. Lett. \textbf{19} (2012), no. 03, 649-666.

\bibitem{L3} Y. Lamzouri, {\it Prime number races with three or more competitors},
Math. Annalen {\bf 356} (2013), 1117--1162.

\bibitem{MS} G. Martin and J. Scarfy, 
 \emph{Comparative prime number theory: A survey}. 37 pages. \url{arXiv:1202.3408}

\bibitem{mv} H. L. Montgomery and R. C. Vaughan, {\em Multiplicative Number Theory I: Classical Theory.} First edition, published by Cambridge University Press, 2007.

\bibitem{RuSa}  M. Rubinstein and P. Sarnak,
\emph{Chebyshev's bias}.
Experiment. Math. \textbf{3} (1994), no. 3, 173-197.


\
\end{thebibliography}
\end{document}